\theoremstyle{plain}
\newtheorem{proposition}{Proposition}[section]
\newtheorem{lemma}[proposition]{Lemma}
\newtheorem{theorem}[proposition]{Theorem}
\newtheorem{corollary}[proposition]{Corollary}
\newtheorem{definition}[proposition]{Definition}
\newtheorem{openproblem}[proposition]{Open problem}
\theoremstyle{definition}
\theoremstyle{remark}
\declaretheoremstyle[
spaceabove=.3\topsep, 
spacebelow=.3\topsep,
headfont=\small\itshape,
headpunct={. ---},
notefont=\normalfont\itshape, 
bodyfont=\itshape,
headindent=\parindent,
numbered=yes,
]{mystyle1}
\declaretheorem[style=mystyle1, name=Step]{Step}
\newcommand{\resetstep}{\setcounter{Step}{0}}
\definecolor{purple}{cmyk}{0.75,0.90,0,0}
\definecolor{DB}{rgb}{0.07,0.0,0.5}
\definecolor{DG}{rgb}{0.0,0.37,0.07}
\definecolor{DR}{rgb}{0.37,0,0.07}
\newcommand{\dd}{\mathop{}\!\diffd}
\newcommand{\diffd}{{\operatorfont d}}
\newcommand{\bB}{\varmathbb{B}}
\newcommand{\bN}{\varmathbb{N}}
\newcommand{\bR}{\varmathbb{R}}
\newcommand{\bS}{\varmathbb{S}}
\newcommand{\bT}{\varmathbb{T}}
\newcommand{\bZ}{\varmathbb{Z}}
\newcommand{\mA}{\mathscr{A}}
\newcommand{\mE}{\mathscr{E}}
\newcommand{\mH}{\mathscr{H}}
\newcommand{\mK}{\mathscr{K}}
\newcommand{\mL}{\mathscr{L}}
\newcommand{\mM}{\mathscr{M}}
\newcommand{\mN}{\mathscr{N}}
\newcommand{\mP}{\mathscr{P}}
\newcommand{\mS}{\mathscr{S}}
\newcommand{\mT}{\mathscr{T}}
\newcommand{\mU}{\mathscr{U}}
\newcommand{\mV}{\mathscr{V}}
\newcommand{\mW}{\mathscr{W}}
\newcommand{\st}{\mathpunct{:}}
\newcommand{\ulrho}{\underline{\rho}}
\newcommand{\olrho}{\overline{\rho}}
\newcommand{\sm}{\mathrm{sm}}
\newcommand{\sh}{\mathrm{sh}}
\newcommand{\Rclas}{\mathscr{R}^{\textnormal{cros}}}
\newcommand{\Rrig}{\mathscr{R}^{\textnormal{rig}}}
\newcommand{\Rsmooth}{\mathscr{R}^{\textnormal{uncr}}}
\newcommand{\trun}{_\mathrm{tr}}
\newcommand{\rtop}{\mathrm{top}}
\newcommand{\ver}{\mathrm{v}}
\newcommand{\hor}{\mathrm{h}}
\newcommand{\restrfun}[1]{_{\vert {#1}}}
\DeclareMathOperator{\id}{id}
\DeclareMathOperator{\dist}{dist}
\DeclarePairedDelimiter{\abs}{\lvert}{\rvert}
\DeclarePairedDelimiter{\bracks}{\lbrack}{\rbrack}
\DeclarePairedDelimiter{\norm}{\lVert}{\rVert}
\DeclarePairedDelimiter{\parens}{\lparen}{\rparen}
\DeclarePairedDelimiter{\set}{\lbrace}{\rbrace}
\DeclarePairedDelimiter{\floor}{\lfloor}{\rfloor}
\numberwithin{equation}{section}
\begin{document}

\title{An improved dense class in Sobolev spaces to manifolds}
\author{Antoine Detaille\footnote{Universite Claude Bernard Lyon 1, CNRS, Centrale Lyon, INSA Lyon, Université Jean Monnet, ICJ UMR5208, 69622 Villeurbanne, France.
E-mail:~\texttt{antoine.detaille@univ-lyon1.fr}}}
\date{\today}

\maketitle

\begin{abstract}
	We consider the strong density problem in the Sobolev space \( W^{s,p}(Q^{m};\mathscr{N}) \) of maps with values into a compact Riemannian manifold \( \mathscr{N} \).
	It is known, from the seminal work of Bethuel, that such maps may always be strongly approximated by \( \mathscr{N} \)-valued maps that are smooth outside of a finite union of \( (m -\lfloor sp \rfloor - 1) \)-planes.
	Our main result establishes the strong density in \( W^{s,p}(Q^{m};\mathscr{N}) \) of an improved version of the class introduced by Bethuel, where the maps have a singular set \emph{without crossings}.
	This answers a question raised by Brezis and Mironescu.
	
	In the special case where \( \mathscr{N} \) has a sufficiently simple topology and for some values of \( s \) and \( p \), this result was known to follow from the \emph{method of projection}, which takes its roots in the work of Federer and Fleming.
	As a first result, we implement this method in the full range of \( s \) and \( p \) in which it was expected to be applicable.
	In the case of a general target manifold, we devise a topological argument that allows to remove the self-intersections in the singular set of the maps obtained via Bethuel’s technique.
\end{abstract}

\tableofcontents


\section{Introduction}
\label{sect:intro}

One of the most important problems concerning the Sobolev space \( W^{s,p}\parens{\Omega;\mN} \) of maps \emph{with values into a compact manifold \( \mN \)} is the \emph{strong density problem}.
Here, \( \Omega \subset \bR^{m} \) is a sufficiently smooth bounded open set, and \( 1 \leq p < +\infty \), \( 0 < s < +\infty \).
Moreover, we split \( s = k + \sigma \), where \( k = \floor{s} \in \bN \) is the integer part of \( s \) and \( \sigma \in \lbrack0,1\rparen \) is the fractional part of \( s \).
It is known from the work of Schoen and Uhlenbeck~\cite[Section~4]{SchoenUhlenbeck1983} that, in a striking contrast with the case of real-valued maps, the set \( C^{\infty}\parens{\overline{\Omega};\mN} \) of smooth maps with values into \( \mN \) \emph{need not} be dense in \( W^{s,p}\parens{\Omega;\mN} \).
The failure of strong density of smooth maps comes from \emph{topological obstructions} due to the target manifold.
Aside from the problem of characterizing those target manifolds \( \mN \) such that \( C^{\infty}\parens{\overline{\Omega};\mN} \) is dense in \( W^{s,p}\parens{\Omega;\mN} \), depending on \( s \), \( p \), and \( \Omega \), a natural question that arises in view of this phenomenon is to find a suitable class of smooth maps outside of a small singular set that would be dense in \( W^{s,p}\parens{\Omega;\mN} \) regardless of the target \( \mN \).

A major breakthrough in this regard was accomplished by Bethuel in his seminal paper~\cite{Bethuel1991} in the case of \( W^{1,p} \), and was subsequently pursued in \( W^{s,p} \) with \( 0 < s < 1 \) by Brezis and Mironescu~\cite{BrezisMironescu2015}, in \( W^{k,p} \) with \( k = 2 \), \( 3 \dotsc \) by Bousquet, Ponce, and Van Schaftingen~\cite{BousquetPonceVanSchaftingen2015}, and in \( W^{s,p} \) with \( s > 1 \) non-integer by the author~\cite{Detaille2023}.

In order to state precisely Bethuel's theorem and its counterpart for arbitrary \( s \), we need to introduce the relevant class of functions.
But first, let us recall the precise definition of the Sobolev space \( W^{s,p}\parens{\Omega;\mN} \).
In the sequel, \( \mN \) denotes a smooth compact connected Riemannian manifold without boundary, isometrically embedded in \( \bR^{\nu} \).
The latter assumption is not restrictive, since one may always find such an embedding provided that one chooses \( \nu \in \bN \) sufficiently large; see~\cite{Nash1954,Nash1956}.
The space \( W^{s,p}(\Omega;\mN) \) is then defined as the set of all maps \( u \in W^{s,p}(\Omega;\bR^{\nu}) \) such that \( u(x) \in \mN \) for almost every \( x \in \Omega \).
Due to the presence of the manifold constraint, \( W^{s,p}(\Omega;\mN) \) is in general not a vector space, but it is nevertheless a metric space endowed with the distance defined by 
\[
	d_{W^{s,p}(\Omega)}(u,v) = \norm{u-v}_{W^{s,p}(\Omega)}.
\]

\begin{definition}
\label{def:class_Rclas}
	The class \( \Rclas_{i}\parens{\Omega;\mN} \) is the set of all maps \( u \) such that there exists a set \( \mS = \mS_{u} \subset \bR^{m} \) which is a finite union of closedly embedded \( i \)\=/dimensional submanifolds of \( \bR^{m} \) and such that \( u \in C^{\infty}\parens{\overline{\Omega} \setminus \mS;\mN} \) and 
	\[
	\abs{D^{j}u\parens{x}}
	\leq
	C\frac{1}{\dist{\parens{x,\mS}}^{j}}
	\quad
	\text{for every \( x \in \Omega \) and \( j \in \bN_{\ast} \),}
	\]
	where \( C > 0 \) is a constant depending on \( u \) and \( j \).
\end{definition}

We note importantly that the singular set \( \mS \) \emph{depends on the map \( u \)}.
Moreover, when we write \( u \in C^{\infty}\parens{\overline{\Omega} \setminus \mS;\mN} \), this means that there exists an open set \( U \subset \bR^{m} \) such that \( \Omega \Subset U \) and a map \( v \in C^{\infty}\parens{U \setminus \mS;\mN} \) such that \( u = v \) on \( \Omega \setminus \mS \).
In Definition~\ref{def:class_Rclas} and in the sequel, by \emph{submanifold}, we always implicitly mean \emph{submanifold without boundary}.
We shall always explicitly state when the submanifolds are allowed to have a boundary. 
By \emph{closedly embedded}, we mean that the manifold should be a closed subset of \( \bR^{m} \), which should not be confused with a \emph{closed manifold}, which is a compact manifold without boundary.
Observe also that, since \( \mS \) is a submanifold of the whole \( \bR^{m} \) instead of merely \( \Omega \), the estimate on the derivatives of \( u \) may depend on parts of \( \mS \) that lie outside of \( \Omega \) --- although we may always restrict to the part of \( \mS \) lying in a neighborhood of \( \overline{\Omega} \), enlarging the constant \( C \) if necessary.
This technical detail will be of crucial importance for us later on, when we require stability properties of the class \( \Rclas \) under composition with local diffeomorphisms for instance --- we omit the subscript when we want to speak about the class \( \Rclas \) in general without specifying the dimension of the singular set.

With these definitions at hand, Bethuel's theorem and its counterpart for arbitrary \( 0 < s < +\infty \) read as follows.

\begin{theorem}
\label{theorem:strong_density}
	Assume that \( sp < m \) and that \( \Omega \) satisfies the segment condition.
	The class \( \Rclas_{m-\floor{sp}-1}\parens{\Omega;\mN} \) is always dense in \( W^{s,p}\parens{\Omega;\mN} \).
\end{theorem}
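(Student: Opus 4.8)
The plan is to follow the strategy that has become standard since Bethuel's original argument, adapting it to the full range \( 0 < s < +\infty \) by combining the existing pieces from the literature cited in the excerpt. The starting point is the observation that it suffices to prove density of \( \Rclas_{m - \floor{sp} - 1} \) on the model cube \( Q^{m} \) (or a slightly larger cube), since a general \( \Omega \) satisfying the segment condition can be handled by the usual localization and partition-of-unity argument, using the segment condition to translate locally and regularize. So I would reduce at once to \( \Omega = Q^{m} \).

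First I would recall the building block supplied by the ``opening of the singularity'' technique: given \( u \in W^{s,p}(Q^{m};\mN) \) and \( \varepsilon > 0 \), one can first mollify at a small scale and then use Fubini together with the Gagliardo--Nirenberg-type estimates to find a good cubication \( \mathcal{K}^{m} \) of \( Q^{m} \) of mesh \( \eta \) such that the restriction of a slightly perturbed \( u \) to the \( \ell \)-skeleton of \( \mathcal{K}^{m} \), with \( \ell = \floor{sp} \), is smooth and controlled, with \( \norm{u}_{W^{s,p}} \) on the skeleton bounded in terms of the global norm. This is exactly where the hypothesis \( sp < m \) enters: \( \ell = \floor{sp} \le m - 1 \), so the \( \ell \)-skeleton is a proper subcomplex. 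Then one performs the \emph{homogeneous extension} from the \( \ell \)-skeleton to the \( (\ell+1) \)-skeleton, and iterates, so that on each cube of \( \mathcal{K}^{m} \) the map is, up to a controlled error, the \( 0 \)-homogeneous extension of its values on a face, hence smooth away from the dual skeleton of dimension \( m - \ell - 1 = m - \floor{sp} - 1 \). The estimate \( \abs{D^{j}u(x)} \le C \dist(x,\mathcal{S})^{-j} \) is then automatic from the \( 0 \)-homogeneity. The new map agrees with \( u \) to within \( \varepsilon \) in \( W^{s,p} \) by the careful quantitative estimates for opening, mollification, and homogeneous extension; these are precisely the estimates established in \cite{Bethuel1991,BrezisMironescu2015,BousquetPonceVanSchaftingen2015,Detaille2023} for the respective ranges of \( s \), and I would invoke them rather than reprove them.

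The one genuinely delicate point --- and the step I expect to be the main obstacle --- is the passage through mollification when \( s \) is non-integer and \( s > 1 \): one must mollify, open the singularities, and estimate the \( W^{s,p} \) distance without losing control, because naive mollification of an \( \mN \)-valued map leaves \( \mN \), and the fractional Gagliardo seminorm does not localize on skeleta as cleanly as the \( W^{1,p} \) seminorm does. This is handled by working with the mollified map composed with the nearest-point projection \( \Pi \) onto \( \mN \) (valid since the mollified map is close to \( \mN \) in \( L^{\infty} \) at small scale), together with the finite-difference characterization of \( W^{s,p} \) and the interpolation/product estimates for such seminorms; again, this is the content of the author's earlier paper \cite{Detaille2023}, which I would cite for the required estimates on opening and homogeneous extension in fractional order \( s > 1 \).

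Finally, I would assemble the pieces: choose the mesh \( \eta \) small and the mollification scale comparable to \( \eta \), so that all error terms are \( \le \varepsilon \); project back onto \( \mN \); check that the resulting map lies in \( \Rclas_{m - \floor{sp} - 1}(Q^{m};\mN) \), its singular set being (a smooth perturbation of) the \( (m - \floor{sp} - 1) \)-dimensional dual skeleton, which is a finite union of closedly embedded submanifolds with the required derivative bounds. Undoing the reduction to the cube via the segment condition and a covering argument then yields the statement for general \( \Omega \). The upshot is that Theorem~\ref{theorem:strong_density} is, in effect, the synthesis of the four works cited, and the proof I would write is an orchestration of their estimates rather than a new construction --- the truly new content of the paper (the ``without crossings'' refinement of Definition~\ref{def:class_Rclas}) comes afterwards.
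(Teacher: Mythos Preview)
The paper does not prove Theorem~\ref{theorem:strong_density}; it is stated as background and attributed to \cite{Bethuel1991,BrezisMironescu2015,BousquetPonceVanSchaftingen2015,Detaille2023}. You correctly recognize this in your closing paragraph. Your sketch of how those proofs proceed, however, contains two genuine gaps that would be fatal if one tried to execute the argument as written.

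First, the reduction to the cube via ``localization and partition of unity'' does not work for \(\mN\)-valued maps: a convex combination \(\sum \chi_i u_i\) of maps into \(\mN\) does not take values in \(\mN\). In the cited works the segment condition is used only to dilate slightly so that \(u\) is defined on a neighborhood of \(\overline\Omega\); the cubication is then taken on \(\Omega\) itself, not on local pieces to be glued afterwards.

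Second, and more seriously, your assertion that ``the mollified map is close to \(\mN\) in \(L^\infty\) at small scale'' is precisely what fails in the regime \(sp < m\): there is no embedding of \(W^{s,p}\) into \(C^0\) or even \(\mathrm{VMO}\), and \(\varphi_\eta \ast u\) may wander arbitrarily far from \(\mN\) on sets of positive measure however small \(\eta\) is. This is exactly why Bethuel introduced the good/bad cube dichotomy: on good cubes the mean oscillation is small and the nearest-point projection applies, while on bad cubes one must use opening and thickening from the \(\floor{sp}\)-skeleton, which is where the dual-skeleton singular set originates. Your outline instead replaces \(u\) by its homogeneous extension from the \(\ell\)-skeleton on \emph{every} cube; this is the Brezis--Mironescu averaging strategy, valid only for \(0 < s < 1\). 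For \(s \ge 1\) it does not preserve closeness in \(W^{s,p}\), and the good/bad distinction (together with the more elaborate opening, adaptive smoothing, thickening, and shrinking steps of \cite{BousquetPonceVanSchaftingen2015,Detaille2023}) is essential.
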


Having at hand Theorem~\ref{theorem:strong_density}, a natural question is whether or not one may improve the class \( \Rclas \) to get an even better dense class of almost smooth maps.
For this purpose, we introduce the following subclass of the class \( \Rclas \).

\begin{definition}
\label{def:class_Rsmooth}
	The class \( \Rsmooth_{i}\parens{\Omega;\mN} \) is the set of all \( u \in \Rclas_{i}\parens{\Omega;\mN} \) such that the singular set \( \mS \) is a closedly embedded \( i \)\=/dimensional submanifold of \( \bR^{m} \).
\end{definition}

Our main result reads as follows.

\begin{theorem}
	\label{theorem:main}
	Assume that \( sp < m \).
	The class \( \Rsmooth_{m-\floor{sp}-1}\parens{Q^{m};\mN} \) is dense in \( W^{s,p}\parens{Q^{m};\mN} \).
\end{theorem}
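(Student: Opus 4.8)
The plan is to leverage Theorem~\ref{theorem:strong_density}: since the cube \( Q^{m} \) is convex, it satisfies the segment condition, so \( \Rclas_{\ell}\parens{Q^{m};\mN} \) with \( \ell := m - \floor{sp} - 1 \) is dense in \( W^{s,p}\parens{Q^{m};\mN} \), and it suffices to show that every \( u \in \Rclas_{\ell}\parens{Q^{m};\mN} \) can itself be approximated in \( W^{s,p} \) by maps in \( \Rsmooth_{\ell}\parens{Q^{m};\mN} \); composing the two approximations then proves the theorem. Fix such a \( u \), with singular set \( \mS = \bigcup_{i=1}^{N} \mS_{i} \), a finite union of closedly embedded \( \ell \)\=/submanifolds, and write \( \kappa := m - \ell = \floor{sp} + 1 \) for the codimension; note that \( \ell < m - sp \), so that, by the pointwise bounds \( \abs{D^{j}u} \leq C \dist\parens{\cdot,\mS}^{-j} \) built into the class \( \Rclas \), a thin tubular neighborhood of any piece of \( \mS \) carries an arbitrarily small amount of \( W^{s,p} \) energy of \( u \). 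Composing \( u \) with a diffeomorphism of \( \bR^{m} \) arbitrarily close to the identity — which perturbs \( u \) by as little as we wish in \( W^{s,p} \) and preserves membership in \( \Rclas_{\ell} \) — we may also assume that the \( \mS_{i} \) are in general position, so that \( \mS \) is a Whitney\=/stratified set whose stratum through a point lying on exactly \( j \) sheets has dimension \( \ell - (j-1)\kappa \); in particular the \emph{crossing locus} \( \mS^{\times} := \bigcup_{i \neq i'} \mS_{i} \cap \mS_{i'} \) is stratified of dimension at most \( \ell - \kappa \).

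The heart of the argument is a procedure that removes the crossings, carried out by \emph{downward induction on the dimension of \( \mS^{\times} \)}. Let \( \Sigma \) be a top\=/dimensional stratum of \( \mS^{\times} \); by general position exactly two sheets, say \( \mS_{1} \) and \( \mS_{2} \), meet transversally along \( \Sigma \). Fixing a tubular neighborhood of \( \Sigma \) and trivializing the relevant normal bundles reduces us to a local model on \( \Sigma \times \bB^{2\kappa} \) in which the singular set is \( \Sigma \times (P_{1} \cup P_{2}) \), with \( P_{1} = \bR^{\kappa} \times \set{0} \) and \( P_{2} = \set{0} \times \bR^{\kappa} \) inside \( \bR^{2\kappa} \), and \( u \) restricts to a map on \( (\Sigma \times \bB^{2\kappa}) \setminus \Sigma \times (P_{1} \cup P_{2}) \). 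I intend, inside \( \cball{2\kappa} \), to replace \( P_{1} \cup P_{2} \) by an embedded \( \kappa \)\=/dimensional submanifold with boundary \( W \) having no crossings and with \( \partial W = (P_{1} \cup P_{2}) \cap \bS^{2\kappa - 1} \) — concretely, the \emph{resolution} obtained by joining the two linking spheres \( \bS^{\kappa-1}_{1} = P_{1} \cap \bS^{2\kappa-1} \) and \( \bS^{\kappa-1}_{2} = P_{2} \cap \bS^{2\kappa-1} \) by an embedded cylinder \( \bS^{\kappa-1} \times \bracks{0,1} \) — and then to construct a new map on the complement of \( \Sigma \times W \) that coincides with \( u \) on the boundary of the neighborhood.

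This is where the main difficulty lies and where the topological input is required. On \( \bS^{2\kappa-1} \), the complement \( \bS^{2\kappa-1} \setminus (\bS^{\kappa-1}_{1} \cup \bS^{\kappa-1}_{2}) \) deformation retracts onto \( \bS^{\kappa-1} \times \bS^{\kappa-1} \), so the homotopy type of \( u \) near the crossing is governed by the two ``linking invariants'' \( \alpha_{1}, \alpha_{2} \in \pi_{\kappa-1}\parens{\mN} \) of \( u \) around \( \mS_{1} \) and \( \mS_{2} \) (together with a secondary datum living essentially in \( \pi_{2\kappa-2}\parens{\mN} \) when \( \kappa \geq 2 \)), and the existence of the desired extension over \( \cball{2\kappa} \setminus W \) reduces to a compatibility of \( \alpha_{1} \) and \( \alpha_{2} \) across the resolving cylinder. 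The connectedness of \( \mN \) disposes of the case \( \kappa = 1 \) — where \( \pi_{0}\parens{\mN} \) is trivial and the resolution is the elementary uncrossing of two hypersurfaces — while for \( \kappa \geq 2 \) I plan to first perform a controlled preliminary homotopy of \( u \) in a thin slab alongside \( \mS_{1} \) that, at the cost of merging the linking class of \( \mS_{1} \) into that of \( \mS_{2} \), reduces matters to the case \( \alpha_{1} = \alpha_{2} \), and only then carry out the resolution. The fact that the algebraic intersection number of \( \mS_{1} \) with \( \mS_{2} \) vanishes — both are null\=/homologous in the contractible ambient cube — is what makes it possible to organize these cancellations globally along \( \Sigma \), two components of opposite local sign at a time, in the spirit of the Whitney trick. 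The construction must be spread coherently over the whole tubular neighborhood of \( \Sigma \), which is done by fixing the resolution once and for all in the fibre \( \cball{2\kappa} \), transporting it by the trivialization, and interpolating near the edges with a partition of unity; one then checks that the new map lies again in \( \Rclas_{\ell} \), with derivative bounds guaranteed by making it \( 0 \)\=/homogeneous near the new singular strata, and that its crossing locus has strictly smaller dimension than \( \mS^{\times} \) (any crossings created by the slab modification are of lower dimension, so the induction makes genuine progress).

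Finally, everything above is performed at an arbitrarily small scale \( \rho > 0 \) in a \( \rho \)\=/neighborhood of \( \Sigma \): by scaling, together with the derivative bounds of \( \Rclas_{\ell} \) and the hypothesis \( sp < m \) (which makes both the energy of \( u \) on that neighborhood and the energy of the modified map on it \( \mathrm{o}(1) \) as \( \rho \to 0 \)), the new map is as \( W^{s,p} \)\=/close to \( u \) as we wish. Iterating the induction finitely many times — the dimension of the crossing locus strictly decreases at each stage, and once it is empty the singular set is a single closedly embedded \( \ell \)\=/submanifold — yields a map in \( \Rsmooth_{\ell}\parens{Q^{m};\mN} \) arbitrarily close to \( u \) in \( W^{s,p} \), which finishes the proof. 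The principal obstacles are thus (i) the topological resolution lemma for a single transverse double crossing, including the identification of the obstruction inside \( \pi_{\kappa-1}\parens{\mN} \) and the reduction to \( \alpha_{1} = \alpha_{2} \); (ii) globalizing this resolution coherently along a stratum while keeping the sharp derivative bounds of the class; and (iii) the bookkeeping that ensures the induction on the dimension of the crossing locus terminates.
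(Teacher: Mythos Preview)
Your approach differs fundamentally from the paper's, and the gap lies in step~(i). The obstruction you identify in \( \pi_{\kappa-1}(\mN) \) is genuine: once you replace \( P_{1}\cup P_{2} \) by a single cylinder \( W \), the transversal linking sphere is the \emph{same} (up to homotopy) at both ends, so any extension of \( u \) forces \( \alpha_{1}=\alpha_{2} \) (or \( \alpha_{1}=\alpha_{2}^{-1} \), depending on orientation). Your proposed reduction to this case does not work as stated. The linking class around a sheet \( \mS_{1} \) is a global invariant of the singularity; a ``preliminary homotopy in a thin slab alongside \( \mS_{1} \)'' that changes \( \alpha_{1} \) near one crossing would either have to change it along all of \( \mS_{1} \), or create new singular strata. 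Your appeal to the Whitney trick is also misplaced: the \( \mS_{i} \) are \emph{closedly embedded} but typically non-compact (for maps in \( \Rrig \) they are unions of affine planes), so they are not null-homologous in any sense that yields a vanishing algebraic intersection number, and in any case the intersection pairing of \( \mS_{1},\mS_{2} \) in the domain says nothing about the classes \( \alpha_{1},\alpha_{2}\in\pi_{\kappa-1}(\mN) \), which depend on the \emph{values} of \( u \). In short, the cylinder resolution is obstructed in general and you have not provided a mechanism that removes the obstruction.

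The paper avoids this entirely. It does not start from \( \Rclas \) but from the rigid class \( \Rrig \), whose singular set is the dual skeleton \( \mT^{\ell^{\ast}} \) of a cubication of \( \overline{Q^{m}} \) (this is what the existing proofs of Theorem~\ref{theorem:strong_density} actually produce). The uncrossing is then achieved not by surgery on \( u \) but by constructing a smooth \emph{local diffeomorphism of the domain} \( \upPhi\colon Q^{m}\to Q^{m} \), equal to the identity outside a small neighborhood of \( \mT^{\ell^{\ast}} \), such that \( \upPhi^{-1}(\mT^{\ell^{\ast}}) \) is a genuine \( \ell^{\ast} \)-submanifold. One then sets \( v=u\circ\upPhi \); no homotopy data of \( u \) enters, and the construction works for every \( u \) with that singular set simultaneously. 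The map \( \upPhi \) is built by exploiting the cube structure: one digs ``wells'' around the vertical planes, uses a projection from a point just \emph{outside} the top face to push the well onto its lateral boundary (thereby evacuating the crossings through \( \partial Q^{m} \) rather than resolving them internally), and iterates over successive coordinate directions until no crossings remain. Energy control is obtained afterwards by the shrinking construction of Bousquet--Ponce--Van~Schaftingen. The essential point you are missing is that the cube's boundary provides an escape route that makes homotopy obstructions irrelevant; an internal surgery along a tube, as you propose, has no such escape.
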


Here, \( Q^{m} = \parens{-1,1}^{m} \) is the open unit cube in \( \bR^{m} \).
The key feature of Theorem~\ref{theorem:main} above is to assert that one may \emph{avoid the crossings} in the singular sets of the almost smooth maps that are dense in \( W^{s,p}\parens{Q^{m};\mN} \).
Indeed, since the singular set of a map in \( \Rclas_{m-\floor{sp}-1}\parens{Q^{m};\mN} \) is a \emph{union} of submanifolds, it may exhibit crossings at the points where those manifolds intersect.
In fact, the singular sets of the maps constructed in the existing proofs of Theorem~\ref{theorem:strong_density} for arbitrary target manifolds \emph{do} exhibit crossings, as they arise as dual skeletons of decompositions of the domain into cubes.
This will be explained in a more detailed way later on.
On the other hand, \( \Rsmooth \) corresponds to all the maps in \( \Rclas \) such that their singular set is \emph{uncrossed}, that is, does not have crossings.
This explains our choice of notation for both classes \( \Rclas \) and \( \Rsmooth \). 

We emphasize that, in full generality, Theorem~\ref{theorem:main} is new \emph{even in the case \( s = 1 \)}.
This answers in particular a question raised by Brezis and Mironescu; see e.g.\ the discussion in~\cite[Chapter~10]{BrezisMironescu2021}. 

Up to now, the only approach to prove strong density results that was able to provide the density of the class \( \Rsmooth \) instead of merely the class \( \Rclas \) was based on the \emph{method of projection}, famously devised by Federer and Fleming~\cite{FedererFleming1960} in their study of normal and integral currents.
Adapted by Hardt and Lin~\cite{HardtLin1987} in the context of maps to manifolds in order to tackle the extension problem, this method was subsequently used to prove strong density results notably by Bethuel and Zheng~\cite{BethuelZheng1988} for \( W^{1,p}\parens{\bB^{m};\bS^{m-1}} \) when \( m-1 \leq p < m \), Rivière~\cite{Riviere2000} for \( W^{\frac{1}{2},2}\parens{\bS^{2};\bS^{1}} \), Bourgain, Brezis, and Mironescu~\cite{BourgainBrezisMironescu2005} for \( W^{s,p}\parens{\bS^{m};\bS^{m-1}} \) when \( 0 < s < 1 \) and \( sp < m \) (see also~\cite{BourgainBrezisMironescu2004} for the case \( s = \frac{1}{2} \)), Bousquet~\cite{Bousquet2007} for \( W^{s,p}\parens{\bS^{m};\bS^{1}} \) when \( 1 \leq sp < 2 \), and Bousquet, Ponce, and Van Schaftingen~\cite{BousquetPonceVanSchaftingen2014} for an \( \parens{\floor{sp}-1} \)\=/connected target \( \mN \) when \( 0 < s < 1 \).
Therefore, up to now, density results for the method of projection are limited either to specific targets \( \mN \) or to the range \( 0 < s \leq 1 \).
For other closely related directions of research, see e.g.\ the work of Haj\l asz~\cite{Hajlasz1994} for a method of \emph{almost projection}, with further developments to fractional spaces by Bousquet, Ponce, and Van Schaftingen~\cite{BousquetPonceVanSchaftingen2013}, and also the work of Pakzad and Rivière~\cite{PakzadRiviere2003} concerning weak density and connections.

In the first part of our paper, in Section~\ref{sect:sing_proj}, we show that the method of projection indeed works in its full expected applicability range, that is, for any \( 0 < s < +\infty \) and any \( \parens{\floor{sp}-1} \)\=/connected target manifold \( \mN \).
This answers a question raised by Bousquet, Ponce, and Van Schaftingen; see~\cite[Section~2]{BousquetPonceVanSchaftingen2014}.
Although not allowing to prove Theorem~\ref{theorem:main} in its full generality, this result is interesting \emph{per se}: (i) it gives the full range of applicability of the method of singular projection, (ii) it provides a much simpler proof of Theorem~\ref{theorem:strong_density} in the particular case of an \( \parens{\floor{sp}-1} \)\=/connected target, and (iii) it has the advantage of applying to a general domain \( \Omega \), unlike our proof of Theorem~\ref{theorem:main}.

In order to present the additional difficulties arising when implementing the method of projection in the full range \( 0 < s < +\infty \) of fractional Sobolev spaces, let us first briefly explain how it works in the simple case of a sphere target. 
When \( \mN = \bS^{N} \subset \bR^{N+1} \), the idea of the method is to approximate an \( \bS^{N} \)\=/valued map \( u \) first by considering the convolution \( \varphi_{\eta} \ast u \), and then projecting this construction onto \( \bS^{N} \) by letting 
\[
	u_{\eta,a} = \frac{\varphi_{\eta} \ast u - a}{\abs{\varphi_{\eta} \ast u - a}}\text{.}
\]
Introducing the parameter \( a \) is at the core of Federer and Fleming's original idea for the projection method.
One should then suitably choose \( a = a_{\eta} \) such that \( a_{\eta} \to 0 \) as \( \eta \to 0 \) and establish appropriate estimates to prove that the maps \( u_{\eta} = u_{\eta,a_{\eta}} \) belong to the class \( \Rclas \) and converge to \( u \) with respect to the Sobolev norm as \( \eta \to 0 \).

The main novel difficulty in our setting is that we need to establish fractional estimates for a general singular projection.
Indeed, up to now, these estimates either were obtained by relying on the specific form of the projection for a particular target, as in~\cite{Bousquet2007}, or were deduced from the integer order estimates through the Gagliardo--Nirenberg interpolation inequality when \( 0 < s < 1 \), as in~\cite{BousquetPonceVanSchaftingen2014}.
However, for \( s > 1 \), this approach would force us to exclude some relevant values of the parameters \( s \) and \( p \).

To illustrate the need for direct estimates, let us see what can be obtained by interpolation.
Assume for instance that one wants to prove the density of the class \( \Rclas_{0}\parens{\bB^{2};\bS^{1}} \) in \( W^{s,p}\parens{\bB^{2};\bS^{1}} \) in the case \( 1 \leq sp < 2 \) --- which is the only relevant one.
One typically wants to interpolate \( W^{s,p} \) between \( L^{r} \) and \( W^{k,q} \), with \( k \in \bN \) satisfying \( k > s \).
For this to hold, one is led to choose \( r \) and \( q \) satisfying the relation
\[
	\frac{1}{p} = \frac{1-\theta}{r} + \frac{\theta}{q}\text{,}
\]
where \( \theta \in \parens{0,1} \) satisfies 
\[
	s = 0\cdot\parens{1-\theta} + k\theta \text{,}
\]
that is, \( \theta = s/k \).
The key assumption to implement successfully Federer and Fleming's averaging argument over \( a \), which essentially requires that the \( W^{k,q} \)\=/norm of \( x \mapsto \frac{x}{\abs{x}} \) over \( \bB^{2} \) should be finite, is therefore that \( kq < 2 \).
If \( 0 < s < 1 \), then we may take \( k = 1 \), and hence the condition is \( q < 2 \).
But if \( 1 < s < 2 \), then \( k \geq 2 \), and this implies that \( q \) should be chosen less that \( 1 \), which is not possible.
Therefore, one sees that some ranges of values of \( s \) and \( p \) that are relevant in the problem of strong density of the class \( \Rclas \) cannot be handled by interpolation when \( s > 1 \) is not an integer.
For the record, we note that the above model case is exactly the one treated by Bousquet~\cite{Bousquet2007}, using direct fractional estimates relying on the specific form of the singular projection when the target is a circle.
Similarly, for maps \( \bB^{3} \to \bS^{2} \) with \( 2 < sp < 3 \), one cannot handle the case \( 2 < s < 3 \) by means of interpolation.
To the best of our knowledge, no direct estimates are available in the existing literature for this case, and hence the method of projection could not be implemented in this setting up to now.

Nevertheless, even knowing the full range of validity of the method of projection is not sufficient to prove Theorem~\ref{theorem:main} in full generality.
Indeed, as will be proved in Lemma~\ref{lemma:necessary_condition_projection}, a singular projection is only available when the target is \( \parens{\floor{sp}-1} \)\=/connected.
Moreover, as will be discussed in Section~\ref{subsect:non_existence_proj}, there is little hope that even all \( \parens{\floor{sp}-1} \)\=/connected manifolds admit a singular projection whose singular set is a submanifold, which is required to deduce the density of the uncrossed class \( \Rsmooth \) using the method of projection.
Therefore, proving Theorem~\ref{theorem:main} in the general case requires to find a different approach.
This is the purpose of the second part of this paper, in Section~\ref{sect:general_case}.

Before giving a sketch of our approach, we introduce some notation relative to decompositions into cubes.
Given \( r > 0 \), a \emph{cubication of radius \( r \) in \( \bR^{m} \)} is a subset of the family of cubes \( Q_{r}(a) + 2r\bZ^{m} \) for some \( a \in \bR^{m} \), 
where \( Q_{r}(a) \) denotes the cube of inradius \( r \) centered at \( a \) in \( \bR^{m} \).
Here the inradius of a cube is the half of its sidelength.
We speak about a cubication \emph{of \( \Omega \subset \bR^{m} \)} when we want to specify the set formed by the union of all cubes in the cubication.
If \( U^{m} \) is a cubication and \( \ell \in  \set{0,\dotsc,m} \), the \emph{\( \ell \)\=/skeleton} \( U^{\ell} \) of \( U^{m} \) is the set of all faces of dimension \( \ell \) of all cubes in \( U^{m} \).
An \emph{\( \ell \)\=/subskeleton  of \( U^{m} \)} is a subset of \( U^{\ell} \).
Given a skeleton \( U^{\ell} \), we write
\[
\mU^{\ell} = \bigcup_{\sigma^{\ell} \in U^{\ell}} \sigma^{\ell}
\]
the set formed by all the elements of \( U^{\ell} \).
In the sequel, we shall often make the abuse of language of also calling the underlying set \( \mU^{\ell} \) an \emph{\( \ell \)\=/skeleton}, but we shall always carefully distinguish it from the set of all \( \ell \)\=/faces using the notation that we introduced.

A special skeleton that will be of particular interest for us is the \( \ell \)\=/skeleton \( K^{\ell} \) of the unit cube \( \overline{Q^{m}} = \bracks{-1,1}^{m} \) for every \( \ell \in \set{0,\dotsc,m} \), so that \( K^{m} = \set{\overline{Q^{m}}} \). 
For every \( \ell \in \set{0,\dotsc,m-1} \), the dual skeleton of \( K^{\ell} \) is the subskeleton \( T^{\ell^{\ast}} \), where \( \ell^{\ast} = m-\ell-1 \), such that \( \mT^{\ell^{\ast}} \) is the set of all those \( x \in \overline{Q^{m}} \) that have at least \( \ell+1 \) vanishing components.
The dual skeleton of a general skeleton is then defined by taking the union of all dual skeletons of the cubes forming the skeleton.
Illustrations of skeletons (in blue) and their duals (in red) in the unit cube \( \overline{Q^{3}} \) are provided on Figure~\ref{fig:skel_and_dual}.
The value of \( \ell \) ranges from \( 2 \) on the left to \( 0 \) on the right, which corresponds to a value of \( \ell^{\ast} \) ranging from \( 0 \) to \( 2 \).

\begin{figure}[ht]
	\centering
	\includegraphics[page=1]{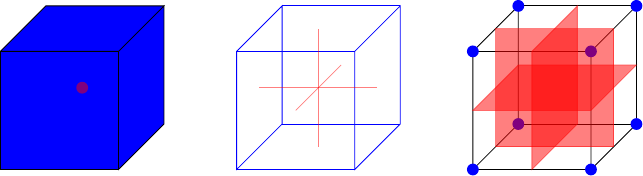}
	\caption{Skeletons and their dual skeletons}
	\label{fig:skel_and_dual}
\end{figure}

We introduce a more rigid version of the class \( \Rclas \), where the singular set is required to coincide with the dual skeleton of some cubication of \( \bR^{m} \).

\begin{definition}
	The class \( \Rrig_{i}\parens{\Omega;\mN} \) is the set of all \( u \in \Rclas_{i}\parens{\Omega;\mN} \) such that the singular set \( \mS \) is the \( i \)\=/dimensional dual skeleton of some cubication of \( \bR^{m} \).
\end{definition}

The fact that the class \( \Rrig \) --- recall that we omit the subscript when we want to speak about the class \( \Rclas \) or one of its variants without specifying the dimension of the singular set --- is a subclass of the class \( \Rclas \) is a consequence of the fact that the singular set of maps in \( \Rrig \) may be taken to be a finite union of \( i \)\=/dimensional hyperplanes.
Indeed, the dual skeleton of a cubication of \( \bR^{m} \) is a union of affine spaces, and one may keep only a finite number of them since \( \Omega \) is bounded.

It is a consequence of the proof of Theorem~\ref{theorem:strong_density} --- but not of Theorem~\ref{theorem:strong_density} itself --- that the more rigid class \( \Rrig_{m-\floor{sp}-1}\parens{\Omega;\mN} \) is dense in \( W^{s,p}\parens{\Omega;\mN} \).
Indeed, the maps in \( \Rclas_{m-\floor{sp}-1} \) that are constructed in the proof of Theorem~\ref{theorem:strong_density} to approximate a given map in \( W^{s,p} \) actually belong to \( \Rrig_{m-\floor{sp}-1} \).

To prove our main result, it therefore suffices to be able to approximate any map in \( \Rrig \).
This is the main goal of Section~\ref{sect:general_case}.
Our proof is in two main steps.
First, we devise a topological procedure that removes the crossings between the orthogonal hyperplanes constituting the singular set of a general map in \( \Rrig \).
This procedure, which itself consists of several steps, only requires to modify the initial map on a small set, but comes without any estimate.
In order to obtain, from the previous construction, a better map with suitable estimates, we rely in a second step on the \emph{shrinking} procedure from~\cite{BousquetPonceVanSchaftingen2015}, which is a more involved version of the scaling argument that was already used by Bethuel in his seminal paper~\cite{Bethuel1991} to remove the singularities with control of energy.

The new ingredient is therefore the topological procedure to uncross the singularities.
This procedure is explained in Section~\ref{subsect:part_cases} in particular cases that allow for more simple notation and illustrative figures, before the general case, presented in Section~\ref{subsect:general_procedure}.
At the core of the argument lies the following model problem.
It is well-known that the \( 1 \)\=/skeleton \( \mK^{1} \) of the unit cube \( \overline{Q^{3}} \) is a retract of \( \overline{Q^{3}} \setminus \mT^{1} \), where \( T^{1} \) is the dual skeleton of \( K^{1} \).
Is it possible to write instead \( \mK^{1} \) as a retract of \( \overline{Q^{3}} \setminus \mS\), where \( \mS \) is a \( 1 \)\=/dimensional \emph{submanifold} of \( \bR^{3} \), that is, without crossing?
Although it may come as very surprising, the answer to this question is actually \emph{yes}.
Elaborating on the construction allowing to obtain such a retraction is the cornerstone of the topological step of our proof in Section~\ref{sect:general_case}.

As a concluding remark, we comment on the dimension of the singular set in the class \( \Rclas \) and its variants.
Indeed, as we explained, the content of Theorem~\ref{theorem:main} is to provide the strong density of an improved version of the class \( \Rclas \).
Another natural idea to improve the density result given by Theorem~\ref{theorem:strong_density} would be to try reducing the dimension of the singular set, that is, to prove the density of the class \( \Rclas_{i} \) for some \( i < m-\floor{sp}-1 \).
However, it turns out that, in presence of the topological obstruction ruling out the density of \( C^{\infty}\parens{\Omega;\mN} \) in \( W^{s,p}\parens{\Omega;\mN} \), the only value of \( i \) for which \( \Rclas_{i} \) is dense in \( W^{s,p} \) is \( i = m-\floor{sp}-1 \).
For smaller \( i \), the same topological obstruction also rules out the density of the class \( \Rclas_{i} \), while for larger \( i \), \( \Rclas_{i} \) is not even a subset of \( W^{s,p} \).
See~\cite[Section~6]{BousquetPonceVanSchaftingen2015} for a detailed proof in the case where \( s \in \bN_{\ast} \).
The argument may be carried out similarly for fractional order spaces.

\subsection*{Acknowledgments}

I am deeply grateful to Petru Mironescu for drawing my attention on this problem, and for his constant advice and support.
I warmly thank Augusto Ponce for many helpful suggestions to improve the presentation of this paper.

I am thankful to Katarzyna Mazowiecka for pointing out to me the reference~\cite{Gastel2016}.

I am indebted to all the experts that took time to listen to my questions and to help me, especially when I was looking for a counterexample to Theorem~\ref{theorem:main}: Vincent Borrelli, Jules Chenal, Jacques Darné, Yves Félix, and Pascal Lambrechts.
The core of Section~\ref{subsect:non_existence_proj} has roots in a sketch of argument that Pascal Lambrechts suggested to me.
The use of Poincaré and Poincaré--Lefschetz dualities was first suggested to me by Jacques Darné.
Yves Félix had the idea of Figure~\ref{fig:projection_planar_view} in Section~\ref{sect:general_case}, and the comment below about the link with the homology of the \( 1 \)\=/skeleton of the cube arose during a discussion with Jules Chenal.

Finally, I thank the anonymous referee for very helpful comments about the first version of this text.

\section{The method of singular projection}
\label{sect:sing_proj}

\subsection{Singular projections: definitions and main result}
\label{subsect:sing_projs}

In the study of the properties of the Sobolev space \( W^{s,p}(\Omega;\mN) \), a natural approach is to work on a tubular neighborhood \( \mN + B_{\iota} \) of radius \( \iota > 0 \) sufficiently small so that the nearest point projection \( \upPi \colon \mN + B_{\iota} \to \mN \) is well-defined and smooth, and to use \( \upPi\) to bring all the constructions back to the manifold.
This approach is suitable to work with in the supercritical range \( sp \geq m \), since the continuous embedding of \( W^{s,p} \) into \( C^{0} \) --- or \( \mathrm{VMO} \) in the limiting case \( sp = m \) --- usually allows one to keep all the constructions in the tubular neighborhood \( \mN + B_{\iota} \).

To deal with the more delicate range \( sp < m \) in which one cannot guarantee that the constructions we want to perform stay in \( \mN + B_{\iota} \), a natural idea would be to look for a globally defined retraction \( P \colon \bR^{\nu} \to \mN \).
However, such an approach is hopeless, since a manifold admitting such a global retraction would be contractible as a retract of the contractible space \( \bR^{\nu} \).
But any compact manifold without boundary of dimension at least \( 1 \) has at least one non trivial homology group, and is therefore non contractible.

The idea, originally introduced by Hardt and Lin~\cite{HardtLin1987} with roots in the work of Federer and Fleming~\cite{FedererFleming1960}, is to consider instead a \emph{singular projection}, that is, a smooth map \( P \colon \bR^{\nu} \setminus \Sigma \to \mN \) satisfying \( P\restrfun{\mN} = \id_{\mN} \), where \( \Sigma \subset \bR^{\nu} \) is a suitable singular set disjoint from \( \mN \).
See the introduction for more references concerning the use of the method of projection in the context of Sobolev maps to manifolds.
In the first part of this paper, our objective is to show that the method of singular projection may indeed be implemented to solve the strong density problem in its full range of expected applicability.

We start by defining the precise notion of singular projection that we consider.

\begin{definition}
	\label{def:singular_projection}
	Let \( \ell \in \set{2,\dotsc,\nu} \).
	An \emph{\( \ell \)\=/singular projection onto \( \mN \)} is a smooth map \( P \colon \bR^{\nu} \setminus \Sigma \to \mN \)
	such that \( P_{\vert \mN} = \id_{\mN} \) and 
	\[
	\abs{D^{j}P\parens{x}} \leq C\frac{1}{\dist{\parens{x,\Sigma}}^{j}}
	\quad
	\text{for every \( x \in \bR^{\nu} \setminus \Sigma \) and \( j \in \bN_{\ast} \)}
	\]
	for some constant \( C > 0 \) depending on \( j \) and \( \mN \),
	where \( \Sigma \subset \bR^{\nu} \setminus \mN \) is either the underlying set of a finite \( \parens{\nu-\ell} \)\=/subskeleton in \( \bR^{\nu} \) or a closedly embedded \( \parens{\nu-\ell} \)-submanifold of \( \bR^{\nu} \).
\end{definition}

At this stage, the reader may wonder why we split the form of allowed singular sets for singular projections into two types, instead of considering more generally maps that are singular outside of a finite union of closedly embedded \( \parens{\nu-\ell}\)\=/submanifolds of \( \bR^{\nu} \), which would include both cases in Definition~\ref{def:singular_projection}.
The answer is given by the combination of the two following lemmas.

\begin{lemma}
	\label{lemma:necessary_condition_projection}
	If there exists a continuous map \( P \colon \bR^{\nu} \setminus \Sigma \to \mN \) such that \( P\restrfun{\mN} = \id_{\mN} \), where \( \Sigma \) is a finite union of closedly embedded \( \parens{\nu-\ell}\)\=/submanifolds of \( \bR^{\nu} \), then \( \mN \) is \( \parens{\ell-2} \)\=/connected.
\end{lemma}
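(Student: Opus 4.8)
The plan is to deduce the $(\ell-2)$-connectedness of $\mN$ from the existence of the retraction $P\colon \bR^{\nu}\setminus\Sigma\to\mN$ by a general-position argument: a sphere of dimension at most $\ell-2$ mapped into $\mN$ can be pushed off $\Sigma$ (since $\Sigma$ has codimension $\ell$), then coned off in $\bR^{\nu}$ (which is contractible), and finally retracted back onto $\mN$ via $P$.

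First I would fix $j\in\set{0,\dotsc,\ell-2}$ and a continuous map $f\colon \bS^{j}\to\mN$; the goal is to show $f$ extends continuously to $\cball{j+1}$. Compose with the inclusion $\mN\hookrightarrow\bR^{\nu}$ and extend radially to get $g\colon\cball{j+1}\to\bR^{\nu}$ with $g|_{\bS^{j}}=f$. The map $g$ need not avoid $\Sigma$, so the next step is to perturb it. By Whitney's approximation theorem one may assume $g$ is smooth (and close enough to the original to stay within the tubular neighborhood of $\mN$ on $\bS^{j}$, so that composing with the nearest-point projection $\upPi$ at the end recovers a map homotopic to $f$). Since $\Sigma$ is a finite union of closedly embedded $(\nu-\ell)$-submanifolds of $\bR^{\nu}$ and $\dim\cball{j+1}=j+1\le\ell-1<\ell=\nu-(\nu-\ell)$, the transversality/general-position theorem lets me perturb $g$, relative to a collar neighborhood of $\bS^{j}$ chosen so that $g$ already avoids $\Sigma$ there (using $f(\bS^{j})\subset\mN$ and $\mN\cap\Sigma=\emptyset$ together with compactness), to a map $\tilde g\colon\cball{j+1}\to\bR^{\nu}\setminus\Sigma$ that still agrees with $f$ (up to $\upPi$) on the boundary. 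Finally, $P\circ\tilde g\colon\cball{j+1}\to\mN$ is a continuous extension of $f$ (using $P|_{\mN}=\id_{\mN}$ on the boundary), which shows $\pi_{j}(\mN)=0$ for all $j\le\ell-2$; together with connectedness of $\mN$ this is exactly $(\ell-2)$-connectedness.

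A couple of technical points deserve care. One must make the perturbations small enough on $\bS^{j}$ that, after composing with $\upPi$, the resulting boundary map is homotopic — not just close — to the original $f$, so that extendability of the perturbed boundary map implies extendability of $f$; this is standard since sufficiently close maps into a manifold are homotopic. One must also ensure $\tilde g$ avoids $\Sigma$ on \emph{all} of $\cball{j+1}$, not merely generically: here the fact that $\Sigma$ is \emph{closedly} embedded is what guarantees $\bR^{\nu}\setminus\Sigma$ is open and that a transverse map of strictly smaller dimension misses $\Sigma$ entirely rather than hitting it in a submanifold of negative dimension, i.e.\ not at all.

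The main obstacle is the transversality step, specifically handling the case where $\Sigma$ is a \emph{finite union} rather than a single submanifold: one should perturb $g$ to be transverse to each piece of $\Sigma$ simultaneously (which is possible since finite intersections of residual sets are residual), and then observe that transversality to a $(\nu-\ell)$-submanifold for a map from a manifold of dimension $\le\ell-1$ forces the image to be disjoint from that piece. Apart from this dimension bookkeeping — and the minor subtlety that the stated hypothesis only gives a \emph{continuous} $P$, so Whitney approximation must be invoked on $g$ rather than on $P$ — the argument is soft and requires no estimates.
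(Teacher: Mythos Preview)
Your argument is correct and follows essentially the same route as the paper: both proofs hinge on the dimension count \( j+1\le \ell-1<\ell \), which guarantees that a generic map from \( \cball{j+1} \) into \( \bR^{\nu} \) avoids the codimension-\( \ell \) set \( \Sigma \), after which the retraction \( P \) produces the required nullhomotopy. The only differences are organizational and cosmetic: the paper first isolates the statement that \( \bR^{\nu}\setminus\Sigma \) is \( (\ell-2) \)-connected (proved by inductively removing one submanifold at a time) and then invokes the standard fact that a retract of an \( (\ell-2) \)-connected space is \( (\ell-2) \)-connected, whereas you work directly with \( f\colon\bS^{j}\to\mN \); and for the perturbation step the paper uses a Sard-type translation argument (for almost every small \( a \), the map \( g-a \) misses \( \Sigma \)) in place of your relative-transversality formulation. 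Both choices are equivalent in strength and difficulty.
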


\begin{lemma}
	\label{lemma:existence_of_projection}
	If \( \mN \) is \( \parens{\ell-2} \)\=/connected, then it admits an \( \ell \)\=/singular projection, whose singular set is the underlying set of a finite \( \parens{\nu-\ell} \)\=/subskeleton in \( \bR^{\nu} \).
\end{lemma}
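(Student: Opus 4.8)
The plan is to build the singular projection by a cell-by-cell extension argument on a cubical complex, exploiting the connectivity hypothesis at each stage. First I would fix a cubication of $\bR^{\nu}$ of some small radius $r$ such that $\mN$ is contained in a tubular neighborhood where the nearest point projection $\upPi$ is defined; after a translation we may assume $\mN$ meets the interior of cubes in a controlled way, or, more conveniently, work on a fine triangulation/cubication adapted to $\mN$ so that $\mN$ is a subcomplex of the $(\nu - \ell)$-dual-skeleton complement. The singular set $\Sigma$ will be taken to be the $(\nu - \ell)$-subskeleton consisting of those dual faces that are ``far'' from $\mN$; the point is that $\bR^{\nu} \setminus \Sigma$ deformation retracts onto the $(\ell - 1)$-skeleton of the original cubication (together with $\mN$), by radially pushing away from each dual face of codimension $\ell$ exactly as in the standard construction recalled in the introduction for $\mK^{1} \subset \overline{Q^{3}} \setminus \mT^{1}$.

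The heart of the matter is then to construct a retraction from this $(\ell-1)$-skeleton onto $\mN$. I would do this skeleton by skeleton: start with the map already defined on $\mN$ (the identity), and extend it successively over the $0$-cells, $1$-cells, \dots, up to the $(\ell-1)$-cells of the complex that are not already in $\mN$. Extending over a $j$-cell amounts to extending a continuous (indeed smooth) map defined on the boundary $\partial \sigma^{j} \cong \bS^{j-1}$ to all of $\sigma^{j} \cong \bB^{j}$, which is possible precisely because $\pi_{j-1}(\mN) = 0$ for $j - 1 \le \ell - 2$, i.e. because $\mN$ is $(\ell-2)$-connected. Composing the retraction thus obtained with the deformation retraction of $\bR^{\nu} \setminus \Sigma$ onto the skeleton yields a continuous retraction $P \colon \bR^{\nu} \setminus \Sigma \to \mN$ with $P_{\vert \mN} = \id_{\mN}$. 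A standard smoothing (relative to $\mN$, where the map is already smooth, and relative to a neighborhood of $\Sigma$, where one uses the homogeneity of the radial retraction) upgrades $P$ to a smooth map; the scale-invariance of each radial retraction away from its dual face is exactly what produces the estimate $\abs{D^{j}P(x)} \le C \dist(x,\Sigma)^{-j}$, since near $\Sigma$ the map $P$ is a composition of finitely many degree-zero-homogeneous maps centered on the faces of $\Sigma$, and away from a neighborhood of $\Sigma$ it is smooth on a compact set and the bound is trivial.

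The main obstacle I anticipate is bookkeeping rather than conceptual: one must arrange the cubication and the identification of $\mN$ with a subcomplex carefully enough that (i) $\Sigma$ genuinely avoids $\mN$ and is a \emph{finite} $(\nu-\ell)$-subskeleton after discarding faces far from $\overline{\Omega}$ (though here the target is all of $\bR^{\nu}$, so one keeps only finitely many relevant faces by working in a bounded neighborhood of $\mN$ and extending trivially, or by using periodicity), and (ii) the derivative estimates survive the composition of the deformation retraction with the skeletal retraction, which requires that the skeletal retraction be Lipschitz and smooth and that the deformation retraction be smooth and homogeneous near each face of $\Sigma$. One should also check the compatibility of the successive extensions at lower-dimensional faces, which is automatic if one extends over cells in order of increasing dimension and keeps track of the already-defined values on boundaries; the connectivity hypothesis is used exactly once per dimension $j \le \ell - 1$ and nowhere else.
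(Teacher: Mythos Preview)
Your plan is essentially the paper's own argument: extend the identity on $\mN$ cell by cell over a skeleton of dimension $\ell-1$ using the vanishing of $\pi_{0},\dots,\pi_{\ell-2}$, then fill in the higher cells by successive homogeneous extensions, creating the $(\nu-\ell)$-dimensional dual-skeleton singular set $\Sigma$; the derivative bounds come exactly from the degree-zero homogeneity, and smoothness requires replacing naive homogeneous extension by the thickening procedure of \cite{BousquetPonceVanSchaftingen2015}.

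The one place where your outline should be sharpened is the handling of $\mN$ relative to the cubication. Trying to arrange that $\mN$ is a \emph{subcomplex} of a cubication is neither possible in general (a smooth submanifold need not be polyhedral, let alone axis-aligned) nor necessary. The paper avoids this entirely: it takes a cubication fine enough that $\mN$ sits inside a subcomplex $\mU^{\nu}$ contained in the tubular neighborhood, sets $P = \upPi$ (nearest point projection) on $\mU^{\nu}$ and $P \equiv b$ a constant on the cubes $\mV^{\nu}$ that do not touch $\mU^{\nu}$, and performs the skeletal extension only on the finitely many transition cubes $W^{\nu} = K^{\nu} \setminus (U^{\nu} \cup V^{\nu})$. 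This has two benefits you were groping for: the identity on $\mN$ is already smoothly incorporated via $\upPi$ without any subcomplex gymnastics, and the finiteness of $\Sigma$ is automatic since only $W^{\nu}$ contributes dual faces and $W^{\nu}$ is finite by compactness of $\mN$. Your ``extend trivially'' or ``periodicity'' suggestions are unnecessary once one sets $P$ constant outside a neighborhood of $\mN$.
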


We first comment Lemma~\ref{lemma:necessary_condition_projection} and~\ref{lemma:existence_of_projection} before giving their proof.
The combination of both these lemmas shows at the same time that the \( \parens{\ell-2} \)\=/connectedness of \( \mN \) is a \emph{necessary and sufficient} condition for the existence of an \( \ell \)\=/singular projection onto \( \mN \), and that allowing the singular set to be a finite union of closedly embedded \( \parens{\nu-\ell}\)-submanifolds of \( \bR^{\nu} \) would not have broadened the range of target manifolds admitting a singular projection.
Meanwhile, assuming that \( \Sigma \) is the underlying set of a finite \( \parens{\nu-\ell} \)\=/subskeleton in \( \bR^{\nu} \) will allow for technical simplifications in the sequel, in addition to being the natural form of singular set arising when performing the proof of Lemma~\ref{lemma:existence_of_projection}.
We also have allowed for singular sets that are given by \emph{one} submanifold of \( \bR^{\nu} \), that is, that do not exhibit crossings, because we are precisely interested in studying the density of the class \( \Rsmooth_{i} \), whose maps have a singular set without crossings.

\begin{proof}[Proof of Lemma~\ref{lemma:necessary_condition_projection}]
	The key observation is the fact that \( \bR^{\nu} \setminus \Sigma \) is \( \parens{\ell-2} \)\=/connected.
	Taking this for granted, the conclusion follows directly from the fact that a retract of an \( \parens{\ell-2} \)\=/connected space is itself \( \parens{\ell-2} \)\=/connected.
	Namely, for every \( i \in \set{0,\dotsc,\ell-2} \), we have the commutative diagram
	\[
		\begin{tikzcd}
		     & \set{0} = \pi_{i}\parens{\bR^{\nu} \setminus \Sigma} \arrow[dr, "P_{\ast}", start anchor=south east] & \\
			 \pi_{i}\parens{\mN}\arrow[ru, hookrightarrow, end anchor=south west]\arrow[rr, "\id_{\mN}"] &  & \pi_{i}\parens{\mN} 
		\end{tikzcd}
	\]
	for the maps induced between the homotopy groups.
	This implies that the identity map on the group \( \pi_{i}\parens{\mN} \) is the zero map, whence \( \pi_{i}\parens{\mN} = \set{0} \) for every \( i \in \set{0,\dotsc,\ell-2} \).
	
	The fact that \( \bR^{\nu} \setminus \Sigma \) is \( \parens{\ell-2} \)\=/connected is presumably well-known, but it seems difficult to find a proof of it in the general case, so we provide one for the convenience of the reader.
	One may consult~\cite[Lemma~3.8]{MironescuVanSchaftingen2021} for a proof in the case where \( \Sigma \) is an affine space.
	Our argument relies on the same idea.
	We show that, if \( \Omega \) is an \( \parens{\ell-2} \)\=/connected open subset of \( \bR^{\nu} \) and \( \mM \) a closedly embedded \( \parens{\nu-\ell} \)\=/submanifold of \( \Omega \), then \( \Omega \setminus \mM \) is \( \parens{\ell-2} \)\=/connected.
	The conclusion then follows by removing inductively each manifold constituting \( \Sigma \), and using this claim at each step to show that the resulting set remains \( \parens{\ell-2} \)\=/connected.
	
	To prove the claim, let \( i \in \set{0,\dotsc,\ell-2} \) and \( f \colon \bS^{i} \to \Omega \setminus \mM \) be a continuous map.
	Since \( \Omega \) is \( \parens{\ell-2} \)\=/connected, there exists a continuous map \( g \colon \overline{\bB^{i+1}} \to \Omega \) such that \( g = f \) on \( \bS^{i} \).
	Moreover, by a standard regularization process, we may assume that \( g \) is smooth on \( \bB^{i+1} \).
	Since \( f\parens{\bS^{i}} \) is closed and \( \Omega \setminus \mM \) is open, there exists \( \delta > 0 \) sufficiently small such that, for any \( a \in B_{\delta} \), \( \parens{f\parens{\bS^{i}} - a} \cap \parens{\Omega \setminus \mM} = \varnothing \).
	Now, we invoke the following particular case of Lemma~\ref{lemma:Sard} that will be proved below using Sard's theorem: since \( i+1 \leq \ell-1 < \ell \), for almost every \( a \in B_{\delta} \), we have \( g^{-1}\parens{\mM+a} = \varnothing \).
	This implies that, for any such \( a \in B_{\delta} \), \( g-a \) is a continuous extension to \( \overline{\bB^{i+1}} \) of \( f-a \), whence \( f-a \) is nullhomotopic.
	But by our choice of \( \delta \), \( f-a \) and \( f \) are homotopic, which implies that \( f \) itself is nullhomotopic.
	This concludes the proof of the lemma.
\end{proof}

\begin{proof}[Proof of Lemma~\ref{lemma:existence_of_projection}]
	We follow the approach in~\cite[Proposition~4.4]{VanSchaftingenOxfordNotes}.
	Let \( \iota > 0 \) be sufficiently small so that the nearest point projection \( \upPi \) onto \( \mN \) is well-defined on \( \mN + B_{\iota} \).
	Let \( K^{\nu} \) be a cubication of \( \bR^{\nu} \) of radius \( r > 0 \).
	Choosing \( r > 0 \) sufficiently small, we may find a subskeleton \( U^{\nu} \) of \( K^{\nu} \) such that \( \mN \subset \mU^{\nu} \subset \mN + B_{\iota/2} \).
	We let \( V^{\nu} \) be the subskeleton of \( K^{\nu} \) consisting in all cubes in \( K^{\nu} \) that do not intersect some cube in \( U^{\nu} \), and \( W^{\nu} = K^{\nu} \setminus (U^{\nu} \cup V^{\nu}) \).
	
	We define \( P \) on \( \mU^{\nu} \) by \( P = \upPi \), and on \( \mV^{\nu} \), we set \( P = b \) for some arbitrary \( b \in \mN \).
	Hence, it remains to define \( P \) on \( \mW^{\nu} \).
	We proceed by induction.
	For any \( \sigma^{0} \in W^{0} \) such that \( P \) is not yet defined on \( \sigma^{0} \), we let \( P = b \) at \( \sigma^{0} \).
	Then, for any \( \sigma^{1} \in W^{1} \) on which \( P \) is not yet defined, we use the assumption \( \pi_{0}\parens{\mN} = \set{0} \) to extend \( P \) on \( \sigma^{1} \) from its values on \( W^{0} \).
	Repeating this process up to dimension \( \ell-1 \), we define \( P \) on the whole \( \mW^{\ell-1} \).
	Finally, we use successive homogeneous extensions to extend \( P \) on \( \mW^{\nu} \setminus \mT^{\parens{\ell-1}^{\ast}} \), where \( T^{\parens{\ell-1}^{\ast}} \) is the dual skeleton to \( W^{\ell-1} \).
	Recall that the homogeneous extension to \( \overline{Q^{i}} \setminus \set{0} \) of a map \( f \) defined on \( \partial Q^{i} \) is given by \( x \mapsto f\parens{x/\abs{x}_{\infty}} \).
	Hence, a first step of homogeneous extension allows us to define \( P \) on \( \mW^{\ell} \), with one singularity at the center of each \( \ell \)\=/cell.
	A second step extends \( P \) on \( \mW^{\ell+1} \), with a singular set given by a finite union of segments, whose endpoints are located at the centers of the \( \parens{\ell+1} \)\=/cells and at the centers of the \( \ell \)\=/cells from the previous step.
	We pursue this construction up to dimension \( \nu \), each step increasing the dimension of the singular set by \( 1 \).
	By the properties of homogeneous extension, the map \( P \) that we constructed is indeed a singular projection, with singular set given by \( \Sigma = \mT^{\parens{\ell-1}^{\ast}} \), which concludes the proof.
	
	We observe however that the above argument produces only a Lipschitz map.
	To obtain a smooth map, one should slightly modify the first step, which relies on topological extension, to define smoothly \( P \) on \( \mW^{\ell-1} + B_{r/2} \) instead of merely \( \mW^{\ell-1} \).
	Then, one should use the \emph{thickening} procedure from~\cite[Section~4]{BousquetPonceVanSchaftingen2015} instead of homogeneous extension, in order to get a smooth map outside of \( \mT^{\parens{\ell-1}^{\ast}} \) with the required estimates for all derivatives.
	See also the work of Gastel~\cite[Proposition~1]{Gastel2016} for a more detailed but slightly different proof.
\end{proof}

Now that we have defined a precise notion of singular projection, we may state the main result of this section.
\begin{theorem}
	\label{thm:main_method_of_projection}
	Assume that \( \Omega \subset \bR^{m} \) is a bounded open set satisfying the segment condition, and that there exists an \( \ell \)-singular projection \( P \colon \bR^{\nu} \setminus \Sigma \to \mN \) with \( sp < \ell \).
	The class \( \Rclas_{m-\ell}(\Omega;\mN) \) is dense in \( W^{s,p}(\Omega;\mN) \).
	If in addition \( \Sigma \) is a \( (\nu-\ell) \)-submanifold of \( \bR^{\nu} \), then \( \Rsmooth_{m-\ell}(\Omega;\mN) \) is dense in \( W^{s,p}(\Omega;\mN) \).
\end{theorem}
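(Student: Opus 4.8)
\medskip
\noindent\emph{Proof strategy.} The plan is to implement Federer and Fleming's averaging argument directly at the fractional scale. Fix \( u \in W^{s,p}(\Omega;\mN) \) and \( \varepsilon > 0 \). Translating and mollifying as usual under the segment condition, we obtain maps \( u_{\eta} \in C^{\infty}(\overline{\Omega};\bR^{\nu}) \) that extend smoothly to a neighbourhood of \( \overline{\Omega} \), satisfy \( \norm{u_{\eta}}_{L^{\infty}} \leq \norm{u}_{L^{\infty}} \), and converge to \( u \) in \( W^{s,p}(\Omega;\bR^{\nu}) \) as \( \eta \to 0 \); modifying each \( u_{\eta} \) outside a large ball so that it equals there a fixed point of \( \mN \) --- which does not change \( u_{\eta} \) on \( \Omega \) --- we may also assume that the preimages appearing below are compact. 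Let \( \rho_{0} = \dist(\mN,\Sigma) \), which is positive since \( \Sigma \) is closed and disjoint from the compact manifold \( \mN \), and, for \( a \in B_{\delta} \subset \bR^{\nu} \) with \( \delta \leq \rho_{0}/4 \), set
\[
	v_{\eta,a}(x) = P\parens{u_{\eta}(x) - a} \quad \text{for } x \notin \mS_{\eta,a} := u_{\eta}^{-1}(\Sigma + a)\text{.}
\]
This is a smooth \( \mN \)\=/valued map off \( \mS_{\eta,a} \); since \( P\restrfun{\mN} = \id_{\mN} \), we compare it with \( u \) through the splitting
\[
	v_{\eta,a} - u = \bracks{P(u_{\eta} - a) - P(u - a)} + \bracks{P(u - a) - u}\text{.}
\]

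Next I would dispose of two more routine points. \emph{Structure of the singular set.} By the Sard-type Lemma~\ref{lemma:Sard} applied to \( u_{\eta} \) and to its restrictions to lower-dimensional preimages, for almost every \( a \) the map \( u_{\eta} \) is transverse to \( \Sigma + a \), so \( \mS_{\eta,a} \) is a finite union of closedly embedded \( (m-\ell) \)\=/submanifolds of \( \bR^{m} \) --- replacing, in the subskeleton case, each face of \( \Sigma \) by the affine plane it spans, in order to get submanifolds without boundary --- and reduces to a single such submanifold when \( \Sigma \) is itself a single \( (\nu-\ell) \)\=/submanifold, the crossings then disappearing. Combining the chain rule, the singular-projection estimates \( \abs{D^{j}P(y)} \leq C\dist(y,\Sigma)^{-j} \), and transversality yields the bounds \( \abs{D^{j}v_{\eta,a}(x)} \leq C\dist(x,\mS_{\eta,a})^{-j} \) of Definition~\ref{def:class_Rclas}, so that \( v_{\eta,a} \in \Rclas_{m-\ell}(\Omega;\mN) \), respectively \( \Rsmooth_{m-\ell}(\Omega;\mN) \), for almost every admissible \( a \). \emph{The second bracket.} The maps \( \Phi_{a}(y) := P(y - a) \), \( y \in \mN \), form a smooth family of maps \( \mN \to \mN \) with \( \Phi_{0} = \id_{\mN} \) and \( \Phi_{a} \to \id_{\mN} \) in every \( C^{k}(\mN) \) as \( a \to 0 \), because \( P \) is smooth on \( \mN + B_{\rho_{0}/2} \); hence, by the standard continuity of the Nemytskii operator on \( W^{s,p} \cap L^{\infty} \), the modulus \( \omega(\delta) := \sup_{\abs{a} \leq \delta}\norm{\Phi_{a} \circ u - u}_{W^{s,p}(\Omega)} \) satisfies \( \omega(\delta) \to 0 \) as \( \delta \to 0 \).

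The crux is the \emph{first bracket}, where direct fractional estimates for a general singular projection are needed. The goal is a bound
\[
	\frac{1}{\abs{B_{\delta}}}\int_{B_{\delta}} \norm{P(u_{\eta} - a) - P(u - a)}_{W^{s,p}(\Omega)}^{p}\,\dd a \leq C(\delta)\,\Lambda\parens{\norm{u_{\eta} - u}_{W^{s,p}(\Omega)}}\text{,}
\]
in which \( C(\delta) \) may blow up as \( \delta \to 0 \) but \( \Lambda(t) \to 0 \) as \( t \to 0 \), with \( \Lambda \) also depending on the bounded quantities \( \norm{u}_{L^{\infty}} \), \( \sup_{\eta}\norm{u_{\eta}}_{W^{s,p}} \), \( \norm{u}_{W^{s,p}} \). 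To establish it, one expands the derivatives of \( P(w-a) \) up to order \( k = \floor{s} \) by the Fa\`{a} di Bruno formula and, using \( \abs{D^{j}P(y)} \leq C\dist(y,\Sigma)^{-j} \) together with the boundedness of \( P \), bounds the \( W^{s,p} \)\=/norm of the difference by integrals of quantities of the form \( \dist(u_{\eta}(x) - a,\Sigma)^{-\beta} \) --- with \( \beta < \ell \), thanks to \( sp < \ell \) --- times powers of \( \abs{u_{\eta} - u} \) and of the derivatives of \( u_{\eta} \) and \( u \), the latter handled by H\"older's inequality and Sobolev embeddings. Each factor \( \dist(\,\cdot\, - a,\Sigma)^{-\beta} \) is then absorbed by the averaging in \( a \) through the elementary estimate
\[
	\frac{1}{\abs{B_{\delta}}}\int_{B_{\delta}} \dist(z - a,\Sigma)^{-\beta}\,\dd a \leq C(\delta,\beta) < +\infty \qquad \text{for all } z \in \bR^{\nu} \text{ and } \beta < \ell\text{,}
\]
the codimension-\( \ell \) counterpart of the local integrability of \( \abs{x}^{-\beta} \) on \( \bR^{\ell} \), which follows from the coarea formula in a tubular neighbourhood of \( \Sigma \) (and, in the subskeleton case, from summing the contributions of the faces of each dimension). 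Granting this, the theorem follows by a diagonal choice: pick \( \delta \leq \rho_{0}/4 \) with \( \omega(\delta) < \varepsilon/2 \), then, using \( u_{\eta} \to u \) in \( W^{s,p} \), pick \( \eta \) with \( C(\delta)\,\Lambda(\norm{u_{\eta} - u}_{W^{s,p}(\Omega)}) < (\varepsilon/2)^{p} \). By Chebyshev's inequality the set of \( a \in B_{\delta} \) with \( \norm{P(u_{\eta} - a) - P(u - a)}_{W^{s,p}(\Omega)} \geq \varepsilon/2 \) has measure strictly below \( \abs{B_{\delta}} \); intersecting its complement with the full-measure set of \( a \) for which \( v_{\eta,a} \in \Rclas_{m-\ell}(\Omega;\mN) \) (resp.\ \( \Rsmooth_{m-\ell}(\Omega;\mN) \)) gives a non-empty set of admissible parameters, and for any \( a \) in it \( \norm{v_{\eta,a} - u}_{W^{s,p}(\Omega)} \leq \varepsilon/2 + \omega(\delta) < \varepsilon \).

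The main obstacle is precisely the displayed averaged bound on the first bracket: one must carry out the Fa\`{a} di Bruno bookkeeping for the fractional (order-\( k \) difference-quotient) seminorm of a composition with a map controlled only by \( \dist(\,\cdot\,,\Sigma)^{-j} \), and organize the estimates so that (i) every negative power that appears has exponent strictly below \( \ell \) --- hence is killed by the average over \( a \) --- and (ii) the resulting bound still carries a factor vanishing as \( \norm{u_{\eta} - u}_{W^{s,p}} \to 0 \). As discussed in the introduction, this genuinely requires direct fractional estimates and is out of reach of Gagliardo--Nirenberg interpolation once \( s > 1 \) is non-integer.
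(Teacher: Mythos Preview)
Your decomposition is different from the paper's: instead of comparing \(P(u_\eta-a)\) with \(P(u-a)\), the paper introduces a cutoff \(\psi\) vanishing near \(\Sigma\) and writes \(v_{\eta,a}=(\psi P)\circ(u_\eta-a)+((1-\psi)P)\circ(u_\eta-a)\). The first summand is a composition with a globally smooth map and converges by the Nemytskii argument (this plays the r\^ole of your ``second bracket''); the second summand \(y_{\eta,a}\) is supported in \(\{\dist(u_\eta-a,\Sigma)\le 2\alpha\}\subset\{|u_\eta-u|\ge\alpha\}\), and one proves the averaged bound \(\int_{B_\alpha}\|y_{\eta,a}\|_{W^{s,p}}^p\,\dd a\to 0\). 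Your second bracket and the Sard/transversality part are fine; the gap is in the first bracket.

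The problematic claim is that the exponents satisfy \(\beta<\ell\) ``thanks to \(sp<\ell\)''. To get a factor \(|u_\eta-u|\) out of \(D^jP(u_\eta-a)-D^jP(u-a)\) you must apply the mean value inequality to \(D^jP\), and this costs an extra \(\dist(\cdot,\Sigma)^{-1}\), so the power becomes \(\beta=(j+1)p\); for \(j=k\) one only knows \(kp\le sp<\ell\), not \((k+1)p<\ell\), and the averaging over \(a\) diverges. The paper sidesteps this by \emph{not} extracting \(|u_\eta-u|\): the smallness of \(y_{\eta,a}\) comes instead from its support lying in the shrinking set \(\{|u_\eta-u|\ge\alpha\}\), and convergence is obtained via Lebesgue's lemma (with Gagliardo--Nirenberg to place all derivative products in \(L^1\)). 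For the fractional seminorm with \(s>1\) there is a further, genuinely new ingredient your sketch misses: after applying the mean value Lemma~\ref{lemma:mean_value_theorem_upPi} (which bounds \(|D^jP(x)-D^jP(y)|\) along paths that stay away from \(\Sigma\)), the inner \(y\)-integral is split at a radius \(r=\eta\,\dist(u_\eta(x)-a,\Sigma)\) depending on both \(x\) and \(a\); this optimisation reduces the effective exponent from \((j+1)p\) to \((j+\sigma)p\le sp<\ell\), which is exactly what makes the average over \(a\) finite. Relatedly, the quantitative form you aim for, \(C(\delta)\,\Lambda(\|u_\eta-u\|_{W^{s,p}})\), is stronger than what the paper obtains: the argument there uses the specific mollifier structure via Lemma~\ref{lemma:fractional_estimates_convolution} to match an \(\eta^{-sp}\) against \(\eta^{sp}|\{|u_\eta-u|\ge\alpha\}|\), and closes with Lebesgue's lemma on a shrinking set rather than a modulus of continuity in \(\|u_\eta-u\|_{W^{s,p}}\).
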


As explained in the introduction, in the particular case where the target manifold admits an \( \ell \)\=/singular projection with \( sp < \ell \), this result provides at the same time a simpler proof of the density of the class \( \Rclas \) with crossings, which corresponds to Bethuel's theorem and its counterpart for arbitrary \( 0 <s < +\infty \), and of our main result concerning the density of the uncrossed class \( \Rsmooth \) provided that the singular set of the target manifold has no crossing.

We note the following important particular case.

\begin{corollary}
	\label{cor:density_spheres}
	Let \( sp < N+1 \).
	The class \( \Rsmooth_{m-N-1}(\Omega;\bS^{N}) \) is dense in \( W^{s,p}(\Omega;\bS^{N}) \).
\end{corollary}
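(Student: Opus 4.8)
The plan is to read this corollary off from Theorem~\ref{thm:main_method_of_projection}, applied with the classical radial projection. Since \( \bS^{N} \) is isometrically embedded in \( \bR^{N+1} \), we are in the setting of the theorem with \( \nu = N+1 \); we may assume \( N \geq 1 \), the case \( N = 0 \) being elementary (it only arises when \( sp < 1 \)). The natural candidate for a singular projection onto \( \bS^{N} \) is
\[
    P \colon \bR^{N+1} \setminus \set{0} \to \bS^{N}, \qquad P(x) = \frac{x}{\abs{x}}.
\]

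First I would check that \( P \) is an \( (N+1) \)-singular projection in the sense of Definition~\ref{def:singular_projection}, with \( \ell = \nu = N+1 \). Indeed, \( P \) is smooth away from the origin, it restricts to the identity on \( \bS^{N} \), and a scaling argument --- using that \( P \) is positively \( 0 \)-homogeneous, so that \( D^{j}P \) is positively \( (-j) \)-homogeneous --- gives the bound \( \abs{D^{j}P(x)} \leq C\abs{x}^{-j} = C\,\dist(x,\set{0})^{-j} \) for every \( j \in \bN_{\ast} \). Its singular set is \( \Sigma = \set{0} \), which is disjoint from \( \bS^{N} \) and is a closedly embedded \( 0 \)-dimensional submanifold of \( \bR^{N+1} \); this is consistent with \( \nu - \ell = (N+1) - (N+1) = 0 \).

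It then remains to invoke the theorem. The hypothesis \( sp < N+1 \) is exactly \( sp < \ell \), so Theorem~\ref{thm:main_method_of_projection} yields the density of \( \Rclas_{m-\ell}(\Omega;\bS^{N}) = \Rclas_{m-N-1}(\Omega;\bS^{N}) \) in \( W^{s,p}(\Omega;\bS^{N}) \); and since \( \Sigma \) is a submanifold, the second assertion of the theorem upgrades this to the density of \( \Rsmooth_{m-N-1}(\Omega;\bS^{N}) \), which in particular gives the claimed conclusion. There is essentially no obstacle here: the only nonroutine point is the homogeneity/derivative estimate for \( x \mapsto x/\abs{x} \), which is classical. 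One could alternatively deduce the existence of an \( (N+1) \)-singular projection from Lemma~\ref{lemma:existence_of_projection}, since \( \bS^{N} \) is \( (N-1) \)-connected, but the singular set obtained that way is a subskeleton rather than a single point, so it would only give the \( \Rclas \) statement and not the stronger \( \Rsmooth \) one; using the explicit radial projection is both simpler and sharper.
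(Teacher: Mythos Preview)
Your proposal is correct and follows essentially the same approach as the paper: exhibit the radial projection \( P(x) = x/\abs{x} \) as an \((N+1)\)-singular projection onto \(\bS^{N}\) and invoke Theorem~\ref{thm:main_method_of_projection}. You supply more detail than the paper (verifying the derivative estimates via homogeneity and noting that \(\Sigma = \{0\}\) is a \(0\)-submanifold, which also yields the \(\Rsmooth\) conclusion), but the argument is the same.
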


The case \( N = 1 \) was already known~\cite[Theorem~2]{Bousquet2007}, but the other cases are presumably new in the general case \( 0 < s < +\infty \).
We note that, as mentioned in the introduction, the case \( s = 1 \) is already contained in~\cite{BethuelZheng1988} and the case \( 0 < s < 1 \) was proved in~\cite{BourgainBrezisMironescu2005}, see also~\cite{BousquetPonceVanSchaftingen2014}.

This corollary is also a good opportunity to emphasize that the method of singular projection \emph{does not always provide the good singular set}.
Indeed, Corollary~\ref{cor:density_spheres} above gives the same size of singular set for every \( s \) and \( p \) such that \( sp < N+1 \).
However, if \( sp < N \), since then \( \pi_{\floor{sp}}\parens{\mN} = \set{0} \), we actually have density of \emph{smooth maps} in \( W^{s,p}\parens{\Omega;\bS^{N}} \), while Corollary~\ref{cor:density_spheres} only provides the density of the class \( \Rsmooth_{m-N-1}(\Omega;\bS^{N}) \).

\begin{proof}
	We note that \( P \colon \bR^{N+1} \setminus \{0\} \to \bS^{N} \) defined by \( P(x) = \frac{x}{\lvert x \rvert} \) is an \( (N+1) \)-singular projection, and invoke Theorem~\ref{thm:main_method_of_projection}.
\end{proof}

A similar result holds for the torus \( \bT^{2} \), for which a singular \( 2 \)\=/projection whose singular set is the union of a circle inside the torus and a line passing through the hole of the torus may be constructed by hand.

\begin{corollary}
	\label{cor:density_torus}
	Let \( sp < 2 \).
	The class \( \Rsmooth_{m-2}(\Omega;\bT^{2}) \) is dense in \( W^{s,p}(\Omega;\bT^{2}) \).
\end{corollary}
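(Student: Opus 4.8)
The plan is to mimic the proof of Corollary~\ref{cor:density_spheres}: build an explicit $2$\=/singular projection onto $\bT^{2}$ by hand and then invoke Theorem~\ref{thm:main_method_of_projection}, the only new feature being that the singular set is now one-dimensional. Realize $\bT^{2} \subset \bR^{3}$ as the standard torus of revolution, and write $C$ for its core circle (the planar circle, at fixed distance from the axis, around which the tube revolves) and $L$ for the axis of revolution. Set $\nu = 3$ and $\Sigma = C \cup L$; since $C$ is compact, $L$ is a closed affine line, and $C \cap L = \varnothing$, the set $\Sigma$ is a closedly embedded $1$\=/submanifold of $\bR^{3}$ disjoint from $\bT^{2}$. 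The complement $\bR^{3} \setminus \bT^{2}$ has exactly two connected components: the bounded one, an open solid torus $D$ with core $C$, and the unbounded one, its exterior $E$. I would define $P$ separately on $D \setminus C$, on $\bT^{2}$, and on $E \setminus L$, and glue.

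On $\bT^{2}$ we must take $P = \id$. On the inner side, removing the core circle makes the open solid torus collar-retract onto its boundary: $\overline{D} \setminus C$ is diffeomorphic to $\bT^{2} \times (0,1]$ with $\bT^{2} \times \{1\}$ corresponding to $\partial D = \bT^{2}$, and I would let $P$ be the projection onto this factor. For the outer side I would invoke the genus-one Heegaard splitting $\bS^{3} = D \cup_{\bT^{2}} D'$: the exterior $E$ is the interior of the complementary solid torus $D'$ with the point at infinity deleted, and the core of $D'$ is $L \cup \{\infty\}$; hence removing $L$ from $E$ again makes it collar-retract onto $\bT^{2} = \partial D'$, and I would let $P$ be the corresponding projection. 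These three definitions agree (all equal $\id_{\bT^{2}}$) along $\bT^{2}$, so they glue to a continuous retraction $P \colon \bR^{3} \setminus \Sigma \to \bT^{2}$. To turn this into a $2$\=/singular projection in the sense of Definition~\ref{def:singular_projection}, I would choose the two collar retractions to equal the identity on a fixed collar of $\bT^{2}$ (ensuring smoothness across $\bT^{2}$) and to be fibrewise radial in fixed tubular neighborhoods of $C$ and of $L$ --- in the normal disk bundle of $C$ the map is $\zeta \mapsto \zeta/\abs{\zeta}$ in each fibre, and likewise around $L$ --- interpolating smoothly in between. Away from $\Sigma$ the map $P$ is then smooth on a region with compact closure, so all its derivatives are bounded there; near $\Sigma$ the radial model gives exactly $\abs{D^{j}P} \leq C\,\dist(\cdot,\Sigma)^{-j}$. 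Thus $P$ is a $2$\=/singular projection with $\nu = 3$, $\ell = 2$, and $\Sigma$ a $(\nu-\ell)$\=/submanifold.

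Since $sp < 2 = \ell$, Theorem~\ref{thm:main_method_of_projection} then yields that $\Rsmooth_{m-2}(\Omega;\bT^{2})$, hence a fortiori $\mR_{m-2}(\Omega;\bT^{2})$, is dense in $W^{s,p}(\Omega;\bT^{2})$. For the statement exactly as written one can even skip the hand construction: $\bT^{2}$ is connected, hence $0$\=/connected, so Lemma~\ref{lemma:existence_of_projection} with $\ell = 2$ already provides a $2$\=/singular projection onto $\bT^{2}$, and Theorem~\ref{thm:main_method_of_projection} applies; the virtue of the explicit projection is that its singular set is a submanifold, which is what produces the improved class $\Rsmooth$. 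I expect the construction of $P$ to be the only delicate point --- identifying $E \setminus L$ with $\bT^{2} \times (0,1]$ through the $\bS^{3}$ picture, and arranging the collar and tubular retractions so that the glued map is genuinely smooth on $\bR^{3} \setminus \Sigma$ with the stated scaling bounds --- the bounds themselves being as immediate as they are for $x \mapsto x/\abs{x}$ in the case of the sphere.
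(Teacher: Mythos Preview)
Your proposal is correct and follows the same approach as the paper: the paper states, just before the corollary, that a \(2\)\=/singular projection onto \(\bT^{2}\) ``whose singular set is the union of a circle inside the torus and a line passing through the hole of the torus may be constructed by hand,'' and then simply invokes Theorem~\ref{thm:main_method_of_projection}. Your construction with \(\Sigma = C \cup L\) is exactly this projection, only spelled out in much greater detail than the paper provides.
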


Consider now the two-holed torus \( \bT^{2} \# \bT^{2} \).
Theorem~\ref{thm:main_method_of_projection} also applies to this target, but since the singular set constructed by hand --- or using Lemma~\ref{lemma:existence_of_projection} and the fact that \( \bT^{2} \# \bT^{2} \) is connected --- exhibits crossings, we only obtain the density of the class \( \Rclas_{m-2} \).
One may wonder whether or not it is possible to construct a better singular projection onto \( \bT^{2} \# \bT^{2} \) whose singular set would be a submanifold, to deduce the density of the class \( \Rsmooth_{m-2} \).
We are not able to answer this question, but the discussion in Section~\ref{subsect:non_existence_proj} suggests that there is little hope that the answer is \emph{yes}.

\subsection{Approximation by singular projection}
\label{subsect:approximation_sing_proj}

We now turn to the proof of Theorem~\ref{thm:main_method_of_projection}.
The strategy is to rely on classical approximation by convolution, and then project back the approximating maps to the target manifold using the singular projection.
Therefore, a first key step is to control the regularity of the singular set which is obtained through this process.
In addition, we need a control on the derivatives of the projected map near the singular set. 
This is the purpose of the following lemma, based on Sard's theorem and the submersion theorem.

\begin{lemma}
	\label{lemma:Sard}
	Let \( v \in C^{\infty}\parens{\Omega;\bR^{\nu}} \) and let \( \Sigma \subset \bR^{\nu} \) be a finite union of \( \parens{\nu-\ell} \)\=/dimensional submanifolds of \( \bR^{\nu} \).
	For almost every \( a \in \bR^{\nu} \), 
	\begin{enumerate}[label=(\roman*)]
		\item the set \( v^{-1}\parens{\Sigma+a} \) is a finite union of \( \parens{m-\ell} \)\=/dimensional submanifolds of \( \Omega \), one for each manifold constituting \( \Sigma \) --- or the empty set if \( \ell > m \);
		\item if \( \ell \leq m \), for every compact \( K \subset \Omega \), there exists a constant \( C > 0 \) depending on \( \Sigma \), \( v \), \( \ell \), \( K \), and \( a \) such that, for every \( x \in K \),
		\[
			\dist{\parens{x,v^{-1}\parens{\Sigma+a}}} \leq C\dist{\parens{v\parens{x},\Sigma+a}}\text{.}
		\]
	\end{enumerate}
\end{lemma}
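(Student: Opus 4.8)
I would prove both statements together by applying Sard's theorem to a suitably constructed map whose regular values yield the desired transversality. Fix one of the submanifolds $M \subset \bR^\nu$ constituting $\Sigma$, say of dimension $\nu - \ell$. The key idea is to consider the map
\[
\Phi \colon \Omega \times M \to \bR^\nu, \qquad \Phi(x,y) = v(x) - y .
\]
Then $v^{-1}(M + a) = \{x \in \Omega : v(x) - a \in M\}$, and $x \in v^{-1}(M+a)$ precisely when $(x,y) \in \Phi^{-1}(a)$ for some $y \in M$. By Sard's theorem, almost every $a \in \bR^\nu$ is a regular value of $\Phi$ (here one must localize: cover $M$ by countably many charts and $\Omega$ by an exhaustion by compacts, apply Sard on each piece, and take the countable intersection of the full-measure sets so obtained). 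For such an $a$, the preimage $\Phi^{-1}(a)$ is a submanifold of $\Omega \times M$ of dimension $m + (\nu - \ell) - \nu = m - \ell$ — empty if $\ell > m$, which already gives the parenthetical clause in (i).

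**From the product preimage to the preimage in $\Omega$.** Next I would check that, for $a$ a regular value of $\Phi$, the set $v^{-1}(M+a)$ is itself a submanifold of $\Omega$ of dimension $m - \ell$. The cleanest route is to observe that regularity of $a$ for $\Phi$ is equivalent to the statement that $v$ is transverse to $M + a$ in the usual sense: for every $x$ with $v(x) \in M + a$, one has $\operatorname{Im} Dv(x) + T_{v(x)}(M+a) = \bR^\nu$. Indeed, $D\Phi(x,y)(h,w) = Dv(x)h - w$ with $w \in T_y M$, so surjectivity of $D\Phi(x,y)$ says exactly $\operatorname{Im} Dv(x) + T_y M = \bR^\nu$. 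The standard transversality theorem (preimage theorem) then gives that $v^{-1}(M+a)$ is a submanifold of $\Omega$ of codimension $\ell$, i.e.\ dimension $m - \ell$. Doing this for each of the finitely many manifolds constituting $\Sigma$ and intersecting the finitely many full-measure sets of good parameters $a$ yields (i) in full.

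**The quantitative estimate (ii).** For part (ii), fix $\ell \le m$, a good parameter $a$, and a compact $K \subset \Omega$. Write $Z = v^{-1}(\Sigma + a)$, which by (i) is a smooth submanifold of $\Omega$. The claim is a local submersion-type estimate: near each point of $K \cap Z$, transversality of $v$ to $\Sigma + a$ lets one choose local coordinates in which $v^{-1}(\Sigma+a)$ looks like a linear subspace and the ``distance to $\Sigma+a$ after applying $v$'' is comparable to the distance to that subspace. Concretely, by the submersion theorem, near each $x_0 \in K \cap Z$ there is a neighborhood and a constant making $\dist(x, Z) \le C\,\dist(v(x), \Sigma+a)$ hold there; one then covers $K \cap Z$ by finitely many such neighborhoods and takes the max of the constants. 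It remains to handle $x \in K$ bounded away from $Z$: on the complement of the chosen finite cover intersected with $K$, the quantity $\dist(v(x),\Sigma+a)$ is bounded below by a positive constant (since $v$ is continuous, $K$ compact, and this closed set is disjoint from $Z$), while $\dist(x, Z) \le \Diam(K\cup Z)$ — wait, $Z$ need not be bounded, so instead bound $\dist(x,Z)$ by $\dist(x, Z \cap K') $ for a slightly larger compact $K'$, which is finite; adjusting $C$ accordingly finishes the estimate.

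**Main obstacle.** The routine parts are the Sard application and the transversality bookkeeping. The step I expect to require the most care is the passage from the qualitative transversality to the \emph{uniform} quantitative bound in (ii): one must make sure the local submersion-coordinate constants can be chosen uniformly over a compact, and handle the two regimes (near $Z$ versus away from $Z$, where $Z$ is not compact) without circularity. This is essentially the same dichotomy argument as in \cite[Lemma~3.8]{MironescuVanSchaftingen2021} for affine $\Sigma$, and the submanifold case reduces to it via charts, but writing it cleanly is the delicate point.
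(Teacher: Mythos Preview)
Your proof of part~(i) is essentially identical to the paper's: the same map \( \Phi(x,y)=v(x)-y \) on \( \Omega\times M \), Sard to get transversality of \( v \) to \( M+a \) for a.e.\ \( a \), then the preimage theorem. No difference worth noting there.

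For part~(ii) the two arguments diverge. The paper proves the estimate by \emph{contradiction}: assuming a sequence \( (x_n)\subset K \) with \( \dist(x_n,Z)>n\,\dist(v(x_n),\Sigma+a) \), it extracts a limit \( x\in Z \), takes nearest-point projections \( y_n \) of \( x_n \) onto \( Z \), extracts a limiting normal direction \( \xi\in T_xZ^{\perp} \), and then, via a careful use of the nearest-point projection onto \( \Sigma \) (after adapting the inner product so that \( Dv(x)[T_xZ^{\perp}] \) and \( T_{v(x)}\Sigma \) are orthogonal), shows \( Dv(x)[\xi]=0 \), contradicting transversality. Your route is instead direct: local submersion coordinates give the inequality near each point of \( K\cap Z \), compactness of \( K\cap Z \) yields a finite cover, and away from \( Z \) you bound the left side above and the right side below. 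Both arguments are valid; yours is the more ``textbook'' covering-plus-dichotomy proof, while the paper's contradiction argument avoids invoking submersion charts explicitly and packages the uniformity into a single sequential extraction. One small remark on your write-up: the worry that \( Z \) is unbounded is unnecessary --- as soon as \( Z\neq\varnothing \), \( \dist(x,Z) \) is finite and bounded on the compact \( K \) by \( \sup_{x\in K}|x-z_0| \) for any fixed \( z_0\in Z \). The genuine edge case you (and the paper) leave implicit is \( Z=\varnothing \), where the estimate is vacuous or requires separate convention; this is harmless for the application.
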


Lemma~\ref{lemma:Sard} is a slight generalization of~\cite[Lemma~2.3]{BousquetPonceVanSchaftingen2014} to the case where \( \Sigma \) is an arbitrary union of submanifolds, not necessarily affine spaces.
The proof of the first part follows the argument given in~\cite{BousquetPonceVanSchaftingen2014}, but for the second part, we give a different proof, by contradiction.

\begin{proof}
	For (i), it suffices to consider the case where \( \Sigma \) is made of only one submanifold, as the general case then follows by taking the union over all manifolds constituting \( \Sigma \).
	Consider the map \( \upPsi \colon \Omega \times \Sigma \to \bR^{\nu} \) defined by
	\[
		\upPsi\parens{x,z} = v\parens{x} - z.
	\]
	Since \( \upPsi \) is a smooth map between smooth manifolds, Sard's theorem --- see e.g.~\cite[Chapter~II.6]{Bredon1993} --- ensures that, for almost every \( a \in \bR^{\nu} \), the linear map \( D\upPsi\parens{x,z} \colon \bR^{m} \times T_{z}\Sigma \to \bR^{\nu} \) is surjective for every \( \parens{x,z} \in \upPsi^{-1}\parens{\set{a}} \).
	If \( \ell > m \), this already implies the conclusion, since the domain of this linear map has dimension \( m+\parens{\nu-\ell} < \nu \). 
	Therefore, \( D\upPsi\parens{x,z} \) is never surjective, which forces \( \upPsi^{-1}\parens{\set{a}} = \varnothing \) for almost every \( a \in \bR^{\nu} \).
	We note that this corresponds to the \emph{easy} case of Sard's theorem, which is nothing else but the fact that the image of a smooth map is a null set when the dimension of the codomain is strictly higher than the dimension of the domain.
	
	If \( \ell \leq m \), we pursue by observing that for any \( a \in \bR^{\nu} \) such that \( D\upPsi\parens{x,z} \) is surjective,
	\[
		\bR^{\nu} = D\upPsi\parens{x,z}\bracks{\bR^{m} \times T_{z}\Sigma} = Dv\parens{x}\bracks{\bR^{m}} + T_{z}\Sigma
		\quad
		\text{for every \( \parens{x,z} \in \upPsi^{-1}\parens{\set{a}} \).}
	\]
	Furthermore, by definition, we have \( \parens{x,z} \in \upPsi^{-1}\parens{\set{a}} \) if and only if \( v(x) = z + a \in \Sigma + a \).
	Hence, we conclude that 
	\[
		\bR^{\nu} =  Dv\parens{x}\bracks{\bR^{m}} + T_{v\parens{x}}\parens{\Sigma+a}
		\quad
		\text{for every \( x \in v^{-1}\parens{\Sigma+a} \).}
	\]
	Otherwise stated, for almost every \( a \in \bR^{\nu} \), the map \( v \) is transversal to \( \Sigma + a \).
	This implies that --- see for instance~\cite[Theorem~1.39]{Warner1983} --- for almost every \( a \in \bR^{\nu} \),
	\( v^{-1}\parens{\Sigma+a} \) is a smooth submanifold of \( \bR^{m} \) of dimension \( m - \ell \).
	
	We now turn to the proof of (ii).
	Once again, it suffices to prove the assertion when \( \Sigma \) is made of one manifold, since the distance to a union of sets is the minimum of the distances to all those sets.
	We assume without loss of generality that \( a = 0 \).
	Assume by contradiction that there exists a compact set \( K \subset \Omega \) and a sequence \( \parens{x_{n}}_{n \in \bN} \) in \( K \) such that 
	\[
		\dist{\parens{x_{n},v^{-1}\parens{\Sigma}}} > n\dist{\parens{v\parens{x_{n}},\Sigma}}\text{.}
	\]
	We note that \( x_{n} \notin v^{-1}\parens{\Sigma} \), otherwise we would have \( 0 > 0 \).
	As \( K \) is compact, up to extraction, we may assume that \( x_{n} \to x \in K \) as \( n \to +\infty \).
	We observe that \( \dist{\parens{v\parens{x},\Sigma}} = 0 \), which implies that \( v\parens{x} \in \Sigma \), and hence \( x \in v^{-1}\parens{\Sigma} \).
	
	For \( n \in \bN \) sufficiently large, let \( y_{n} \) be the nearest point projection of \( x_{n} \) onto \( v^{-1}\parens{\Sigma} \).
	Since \( x_{n} \notin v^{-1}\parens{\Sigma} \), we have \( x_{n} \neq y_{n} \).
	Moreover, by construction of the nearest point projection, we know that \( x_{n} - y_{n} \in T_{y_{n}}v^{-1}\parens{\Sigma}^{\perp} \) for every \( n \in \bN \), and also \( \abs{x_{n}-y_{n}} = \dist{\parens{x_{n},v^{-1}\parens{\Sigma}}} \).
	In particular, \( y_{n} \to x \).
	Up to a further extraction, we may assume that 
	\[
		\frac{x_{n}-y_{n}}{\abs{x_{n}-y_{n}}} \to \xi \in T_{x}v^{-1}\parens{\Sigma}^{\perp}
		\quad
		\text{as \( n \to +\infty \).}
	\]
	Since \( v \) is continuously differentiable, we deduce that 
	\[
		\frac{v\parens{x_{n}} - v\parens{y_{n}}}{\dist{\parens{x_{n},v^{-1}\parens{\Sigma}}}}
		=
		\frac{v\parens{x_{n}} - v\parens{y_{n}}}{\abs{x_{n} - y_{n}}} \to Dv\parens{x}\bracks{\xi}
		\quad
		\text{as \( n \to +\infty \).}
	\]
	
	Let us note that, since we are in the situation where 
	\[
		\bR^{\nu} =  Dv\parens{x}\bracks{\bR^{m}} + T_{v\parens{x}}\Sigma\text{,}
	\] 
	we have
	\[
		\bR^{\nu} =  Dv\parens{x}\bracks{T_{x}v^{-1}\parens{\Sigma}^{\perp}} \oplus T_{v\parens{x}}\Sigma\text{.}
	\]
	Indeed, this follows from the fact that \( Dv\parens{x}\bracks{\zeta} \in T_{v\parens{x}}\Sigma \) for every \( \zeta \in T_{x}v^{-1}\parens{\Sigma} \) and a dimension argument.
	Therefore, up to replacing the usual scalar product on \( \bR^{\nu} \) with a new one, we may assume that the two subspaces involved in the above direct sum are actually orthogonal.
	This only modifies the distances by a multiplicative constant.
	Let \( \upPi_{\Sigma} \) denote the nearest point projection onto \( \Sigma \) relative to the metric induced by this new scalar product.
	
	By the triangle inequality, we write
	\[
		\frac{\abs{v\parens{x_{n}} - v\parens{y_{n}}}}{\dist{\parens{x_{n},v^{-1}\parens{\Sigma}}}}
		\leq
		\frac{\abs{v\parens{x_{n}} - \upPi_{\Sigma}\parens{v\parens{x_{n}}}}}{\dist{\parens{x_{n},v^{-1}\parens{\Sigma}}}} 
		+ 
		\frac{\abs{\upPi_{\Sigma}\parens{v\parens{x_{n}}} - \upPi_{\Sigma}\parens{v\parens{y_{n}}}}}{\dist{\parens{x_{n},v^{-1}\parens{\Sigma}}}}\text{,}
	\]
	where we made use of the fact that \( \upPi_{\Sigma}\parens{v\parens{y_{n}}} = v\parens{y_{n}} \) since \( v\parens{y_{n}} \in \Sigma \).
	We note that \( \upPi_{\Sigma}\parens{v\parens{x_{n}}} \) is well-defined for \( n \) sufficiently large, as \( v\parens{x_{n}} \) is then close to \( \Sigma \).
	The first term in the right-hand side converges to \( 0 \) as \( n \to +\infty \) by the assumption over \( \parens{x_{n}}_{n \in \bN} \), as \( \abs{v\parens{x_{n}} - \upPi_{\Sigma}\parens{v\parens{x_{n}}}} = \dist{\parens{v\parens{x_{n}},\Sigma}} \).
	Concerning the other term, since \( \upPi_{\Sigma} \circ v \) is continuously differentiable in a neighborhood of \( x \), we have
	\[
		\frac{\upPi_{\Sigma}\parens{v\parens{x_{n}}} - \upPi_{\Sigma}\parens{v\parens{y_{n}}}}{\dist{\parens{x_{n},v^{-1}\parens{\Sigma}}}}
		\to
		D\upPi_{\Sigma}\parens{v\parens{x}}[Dv\parens{x}[\xi]]
		\quad
		\text{as \( n \to +\infty \).}
	\]
	Since \( D\upPi_{\Sigma}\parens{v\parens{x}} \) is, by construction of the nearest point projection, the orthogonal projection onto \( T_{v\parens{x}}\Sigma \), we have \( D\upPi_{\Sigma}\parens{v\parens{x}}[Dv\parens{x}[\xi]] = 0 \) as a consequence of our choice of scalar product and the fact that \( \xi \in T_{x}v^{-1}\parens{\Sigma}^{\perp} \).
	
	Hence, we conclude that
	\[
		\frac{\abs{v\parens{x_{n}} - v\parens{y_{n}}}}{\dist{\parens{x_{n},v^{-1}\parens{\Sigma}}}}
		\to 0
		\quad
		\text{as \( n \to +\infty \).}
	\]
	This implies that \( Dv\parens{x}\bracks{\xi} = 0 \).
	But, since \( \xi \in T_{x}v^{-1}\parens{\Sigma}^{\perp} \) is a nonzero vector, this contradicts the fact that 
	\[
		\bR^{\nu} =  Dv\parens{x}\bracks{T_{x}v^{-1}\parens{\Sigma}^{\perp}} \oplus T_{v\parens{x}}\Sigma\text{,}
	\]
	and concludes the proof.
\end{proof}

The next lemma provides a mean value-type estimate for the derivatives of a singular projection.

\begin{lemma}
	\label{lemma:mean_value_theorem_upPi}
	Let \( \omega \subset \bR^{\nu} \) be a bounded set and \( P \colon \bR^{\nu} \setminus \Sigma \to \mN \) be a singular projection.
	For every \( x \), \( y \in \omega \setminus \Sigma \) such that \( \dist{\parens{x,\Sigma}} \leq \dist{\parens{y,\Sigma}} \) and for every \( j \in \bN_{\ast} \),
	\[
	\abs{D^{j}P\parens{x} - D^{j}P\parens{y}}
	\leq
	C\frac{\abs{x-y}}{\dist{\parens{x,\Sigma}}^{j+1}}
	\]
	for some constant \( C > 0 \) depending on \( \Sigma \) and the diameter of \( \omega \).
\end{lemma}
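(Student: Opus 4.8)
The plan is to prove the estimate by a simple dichotomy on the size of \( \abs{x-y} \) relative to \( d \coloneqq \dist\parens{x,\Sigma} \), using nothing more than the pointwise derivative bounds built into Definition~\ref{def:singular_projection} (applied at orders \( j \) and \( j+1 \)) together with the hypothesis \( \dist\parens{x,\Sigma} \leq \dist\parens{y,\Sigma} \). I do not expect any genuine obstacle here; the only point requiring a little care is to make sure that in the ``close'' regime the segment joining \( x \) to \( y \) stays uniformly away from \( \Sigma \).

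First I would treat the case \( \abs{x-y} \geq d/2 \). Here one argues crudely: by the triangle inequality and the bound from Definition~\ref{def:singular_projection},
\[
	\abs{D^{j}P\parens{x} - D^{j}P\parens{y}}
	\leq
	\abs{D^{j}P\parens{x}} + \abs{D^{j}P\parens{y}}
	\leq
	\frac{C}{\dist\parens{x,\Sigma}^{j}} + \frac{C}{\dist\parens{y,\Sigma}^{j}}
	\leq
	\frac{2C}{d^{j}},
\]
where the last inequality uses \( \dist\parens{x,\Sigma} \leq \dist\parens{y,\Sigma} \). Since \( \abs{x-y} \geq d/2 \) gives \( d^{-j} \leq 2\abs{x-y}\,d^{-j-1} \), this settles the case with constant \( 4C \).

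Next I would treat the case \( \abs{x-y} < d/2 \). For every point \( z \) on the segment \( \bracks{x,y} \) one has \( \dist\parens{z,\Sigma} \geq \dist\parens{x,\Sigma} - \abs{z-x} \geq d - \abs{x-y} > d/2 > 0 \), so the whole segment lies in \( \bR^{\nu} \setminus \Sigma \) and \( P \) is smooth on a neighborhood of it. Applying the mean value inequality to \( D^{j}P \) along \( \bracks{x,y} \) and then the bound from Definition~\ref{def:singular_projection} at order \( j+1 \),
\[
	\abs{D^{j}P\parens{x} - D^{j}P\parens{y}}
	\leq
	\abs{x-y}\sup_{z \in \bracks{x,y}}\abs{D^{j+1}P\parens{z}}
	\leq
	\abs{x-y}\sup_{z \in \bracks{x,y}}\frac{C}{\dist\parens{z,\Sigma}^{j+1}}
	\leq
	2^{j+1}C\,\frac{\abs{x-y}}{d^{j+1}}.
\]

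Combining the two cases yields the claimed estimate with a constant depending only on \( \mN \) and \( j \); in fact, as the argument shows, no dependence on \( \Diam\omega \) is actually needed, so the hypothesis that \( \omega \) be bounded is used only for cosmetic uniformity. The main (and only mild) subtlety, as noted above, is the verification that in the regime \( \abs{x-y} < d/2 \) the segment \( \bracks{x,y} \) is disjoint from \( \Sigma \) with \( \dist\parens{\cdot,\Sigma} > d/2 \) throughout, which is precisely what the smallness assumption guarantees; beyond that the proof is entirely routine.
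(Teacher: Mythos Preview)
Your argument is correct and in fact cleaner than the paper's. The paper also splits into a ``far'' regime and a ``close'' regime, but it fixes an absolute threshold \( \delta>0 \) (independent of \( x \)) and, in the close regime, builds a curved Lipschitz path \( \gamma \) from \( x \) to \( y \) that stays at distance \emph{at least} \( \dist(x,\Sigma) \) from \( \Sigma \); this forces a case analysis on the structure of \( \Sigma \) (local charts when \( \Sigma \) is a submanifold, segments and circular arcs when \( \Sigma \) is a subskeleton). Your choice of a moving threshold \( d/2 \) lets the straight segment do the job, at the harmless cost of a factor \( 2^{j+1} \). A pleasant by-product, which you noticed, is that the fixed-\( \delta \) ``far'' case in the paper is where the dependence on \( \Diam\omega \) enters (one needs \( \dist(x,\Sigma) \) bounded on \( \omega \) to pass from \( d^{-j} \) to \( \abs{x-y}\,d^{-j-1} \)); your scaling-adapted split removes that dependence entirely.
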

\begin{proof}
	We claim that there exists \( \delta > 0 \) depending only on \( \Sigma \) such that, whenever \( \abs{x-y} \leq \delta \) and \( \dist{\parens{x,\Sigma}} < \delta \), there exists a Lipschitz path \( \gamma \colon \bracks{0,1} \to \bR^{\nu} \setminus \Sigma \) satisfying \( \gamma(0) = x \), \( \gamma(1) = y \), \( \lvert \gamma' \rvert \lesssim \lvert x-y \rvert \), and \( \dist{\parens{\gamma\parens{t},\Sigma}} \geq \dist{\parens{x,\Sigma}} \) for every \( t \in \bracks{0,1} \).
	The conclusion of the lemma follows directly from this claim.
	Indeed, if \( \abs{x-y} \leq \delta \) and \( \dist{\parens{x,\Sigma}} < \delta \), it suffices to apply the mean value theorem along the path \( \gamma \) and to use the estimates on the derivatives of \( P \).
	If instead \( \abs{x-y} \geq \delta \), since \( \dist{\parens{x,\Sigma}} \) is bounded from above on \( \omega \), we have
	\[
	\abs{D^{j}P\parens{x} - D^{j}P\parens{y}}
	\lesssim
	\frac{1}{\dist{\parens{x,\Sigma}}^{j}}
	\lesssim \frac{\abs{x-y}}{\dist{\parens{x,\Sigma}}^{j+1}}.
	\]
	In the case where \( \abs{x-y} \leq \delta \) but \( \dist{\parens{x,\Sigma}} \geq \delta \), we only have to invoke the mean value theorem along the straight line between \( x \) and \( y \).
	
	We turn to the proof of the claim.
	We first assume that \( \Sigma \) is a closedly embedded submanifold of \( \bR^{\nu} \).
	We take \( R > 0 \) so large that \( \omega \subset B_{R} \), and by a compactness argument, we choose \( 0 < \delta < R \) sufficiently small so that there exist finitely many open sets \( U_{1} \), \dots, \( U_{j} \subset \bR^{\nu} \) such that for any \( z \in \Sigma \cap B_{2R} \), there exists \( i \in \{1,\dots,j\} \) with \( B_{2\delta}(z) \subset U_{i} \), and there exist diffeomorphisms \( \upPhi_{i} \colon U_{i} \to B^{\nu-\ell}_{r_{i}} \times B^{\ell}_{s_{i}} \) for some \( r_{i} \), \( s_{i} > 0 \), satisfying \( \upPhi_{i}\parens{\Sigma \cap U_{i}} = B^{\nu-\ell}_{r_{i}} \times \set{0} \) and such that for every \( a \in U_{i} \), \( \dist{\parens{a,\Sigma}} \) is given by the norm of the second component of \( \upPhi_{i}\parens{a} \).
	Choose \( z \in \Sigma \cap B_{2R} \) such that \( x \in B_{\delta}\parens{z} \), so that \( y \in B_{2\delta}\parens{z} \).
	Let \( i \in \set{1,\dots,j} \) with \( B_{2\delta}\parens{z} \subset U_{i} \).
	We observe that we may connect \( \upPhi\parens{z} \) and \( \upPhi\parens{y} \) in \( B^{\nu-\ell}_{r_{i}} \times B^{\ell}_{s_{i}} \) by a Lipschitz path \( \tilde{\gamma} \colon \bracks{0,1} \to B^{\nu-\ell}_{r_{i}} \times B^{\ell}_{s_{i}} \) with \( \abs{\gamma'} \lesssim \abs{\upPhi\parens{x}-\upPhi\parens{y}} \) and such that the norm of the second component of \( \tilde{\gamma} \) is always larger than \( \dist{\parens{x,\Sigma}} \).
	Conclusion follows by defining \( \gamma = \upPhi^{-1} \circ \tilde{\gamma} \).
	
	In the case where \( \Sigma \) is a subskeleton, we observe that one may obtain a suitable \( \gamma \) as a succession of line segments and arcs of circle.
\end{proof}

The proof of Theorem~\ref{thm:main_method_of_projection} relies on approximation by convolution.
It will be instrumental for us to establish estimates for the distance between the convoluted map and the original one, and also estimates on the derivatives of the convoluted map.
To state the required estimates in the fractional setting, we introduce the \emph{fractional derivative} as 
\[
D^{\sigma,p}v(x) = \biggl(\int_{\Omega}\frac{\lvert v(x)-v(y) \rvert^{p}}{\lvert x-y \rvert^{m+\sigma p}}\dd y\biggr)^{\frac{1}{p}}.
\]
Let also \( \varphi \in C^{\infty}_{\mathrm{c}}(\bB^{m}) \) be a fixed mollifier, that is, 
\[
\varphi \geq 0
\quad 
\text{on \( \bB^{m} \)}
\quad
\text{and}
\quad
\int_{\bB^{m}} \varphi = 1.
\]
Given \( \eta > 0 \), we define 
\[
\varphi_{\eta}(x) = \frac{1}{\eta^{m}}\varphi\Bigl(\frac{x}{\eta}\Bigr)
\quad
\text{for every \( x \in \bR^{m} \).}
\]
Lemma~\ref{lemma:fractional_estimates_convolution} corresponds to~\cite[Lemma~2.4]{BousquetPonceVanSchaftingen2014}.
We present the proof for the sake of completeness.

\begin{lemma}
	\label{lemma:fractional_estimates_convolution}
	Assume that \( 0 < \sigma < 1 \) and let \( v \in W^{\sigma,p}(\Omega;\bR^{\nu}) \).
	For every \( \eta > 0 \) and for every \( x \in \Omega \) such that \( \eta < \dist{(x,\partial\Omega)} \),
	\begin{enumerate}[label=(\roman*)]
		\item \( \lvert \varphi_{\eta} \ast v(x) - v(x) \rvert \leq C\eta^{\sigma}D^{\sigma,p}v(x) \);
		\item \( \lvert D(\varphi_{\eta} \ast v)(x) \rvert \leq C'\eta^{\sigma-1}D^{\sigma,p}v(x) \);
	\end{enumerate}
	for some constants \( C > 0 \) depending on \( \varphi \) and \( C' > 0 \) depending on \( D\varphi \).
\end{lemma}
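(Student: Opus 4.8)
The plan is to write the difference $\varphi_\eta \ast v(x) - v(x)$ as an average of differences $v(x-y) - v(x)$ against the mollifier, and then control this by the fractional derivative via Hölder's inequality. Concretely, since $\int \varphi_\eta = 1$, I would write
\[
\varphi_\eta \ast v(x) - v(x) = \int_{\bB^m} \varphi_\eta(y)\bigl(v(x-y) - v(x)\bigr)\dd y = \int_{B_\eta} \varphi_\eta(y)\bigl(v(x-y) - v(x)\bigr)\dd y,
\]
using that $\varphi_\eta$ is supported in $B_\eta$; the hypothesis $\eta < \dist(x,\partial\Omega)$ guarantees that $x - y \in \Omega$ whenever $|y| < \eta$, so all the values $v(x-y)$ make sense. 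Bounding $\varphi_\eta(y) \leq \eta^{-m}\norm{\varphi}_{L^\infty}$ and then applying Hölder with exponents $p$ and $p'$ over the ball $B_\eta$ gives
\[
\abs{\varphi_\eta \ast v(x) - v(x)} \leq \frac{\norm{\varphi}_{L^\infty}}{\eta^m}\biggl(\int_{B_\eta}\frac{\abs{v(x-y)-v(x)}^p}{\abs{y}^{m+\sigma p}}\dd y\biggr)^{\frac 1p}\biggl(\int_{B_\eta}\abs{y}^{(m+\sigma p)\frac{p'}{p}}\dd y\biggr)^{\frac{1}{p'}}.
\]
The first factor is at most $D^{\sigma,p}v(x)$ after the change of variables $y \mapsto x - y$ (extending the domain of integration from $B_\eta(x)$ to $\Omega$), and the second factor is a pure power of $\eta$: a direct computation shows it equals $C\eta^{m/p' + m/p + \sigma} \cdot \eta^{-?}$, which after combining with the prefactor $\eta^{-m}$ leaves exactly $\eta^\sigma$. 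This yields (i).

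For (ii), the idea is identical but one differentiates under the integral sign first. Since $\int \varphi_\eta = 1$ is constant in $x$, we have $\int D\varphi_\eta = 0$, so
\[
D(\varphi_\eta \ast v)(x) = \int_{\bB^m} D\varphi_\eta(y) v(x-y)\dd y = \int_{B_\eta} D\varphi_\eta(y)\bigl(v(x-y) - v(x)\bigr)\dd y,
\]
where subtracting the constant-in-$y$ term $v(x)$ is legitimate precisely because $\int_{B_\eta} D\varphi_\eta = 0$. Now $\abs{D\varphi_\eta(y)} \leq \eta^{-m-1}\norm{D\varphi}_{L^\infty}$, and running the same Hölder estimate as above produces an extra factor $\eta^{-1}$ compared to (i), giving the bound $C'\eta^{\sigma-1}D^{\sigma,p}v(x)$ with $C'$ depending on $D\varphi$.

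The computation is entirely routine; there is no real obstacle. The only point requiring a small amount of care is bookkeeping the powers of $\eta$ in the Hölder step — in particular checking that the exponent $(m+\sigma p)\frac{p'}{p}$ combined with the volume element $\dd y = c\, r^{m-1}\dd r$ on $B_\eta$ yields a convergent integral (which it does, since the total power of $r$ is $m + \sigma p \cdot \frac{p'}{p} + m - 1 > -1$) and that all the exponents of $\eta$ assemble correctly to $\eta^\sigma$ in (i) and $\eta^{\sigma-1}$ in (ii). One should also note at the outset that $D^{\sigma,p}v(x) < +\infty$ for almost every $x$, which follows from $v \in W^{\sigma,p}(\Omega;\bR^\nu)$ by Fubini, so that the estimates are meaningful pointwise a.e.
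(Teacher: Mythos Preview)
Your proposal is correct and follows essentially the same route as the paper: write the convolution (resp.\ its derivative) as an average of differences \(v(x-y)-v(x)\) against \(\varphi_\eta\) (resp.\ \(D\varphi_\eta\), using \(\int D\varphi_\eta=0\)), then estimate by the fractional derivative. The only cosmetic difference is that the paper invokes Jensen's inequality with respect to the probability measure \(\varphi_\eta\dd h\) (and the normalized measure \(\lvert D\varphi_\eta\rvert\dd h\) for (ii)) in place of your H\"older step, which is a trivially equivalent formulation.
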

\begin{proof}
	Jensen's inequality ensures that 
	\[
	\begin{split}
		\lvert \varphi_{\eta} \ast v(x) - v(x) \rvert
		&\leq
		\int_{B_{\eta}} \varphi_{\eta}(h)\lvert v(x-h) - v(x) \rvert\dd h \\
		&\leq
		\biggl(\int_{B_{\eta}} \varphi_{\eta}(h)\eta^{m+\sigma p}\frac{\lvert v(x-h) - v(x) \rvert^{p}}{\lvert h \rvert^{m+\sigma p}}\dd h\biggr)^{\frac{1}{p}}.
	\end{split}
	\]
	Since \( \varphi_{\eta} \lesssim \eta^{-m} \), we conclude that
	\[
		\lvert \varphi_{\eta} \ast v(x) - v(x) \rvert
		\leq
		C\eta^{\sigma}D^{\sigma,p}v(x).
	\]
	This proves the first part of the conclusion.
	
	For the second part, by differentiating under the integral, we find
	\[
		D(\varphi_{\eta} \ast v)(x) 
		=
		\int_{B_{\eta}} D\varphi_{\eta}(h)v(x-h)\dd h.
	\]
	As \( \int_{B_{\eta}} D\varphi_{\eta} = 0 \), we may write
	\[
		\lvert D(\varphi_{\eta} \ast v)(x) \rvert
		\leq
		\int_{B_{\eta}} \lvert D\varphi_{\eta}(h) \rvert \lvert v(x-h) - v(x) \rvert\dd h.
	\]
	Since \( \int_{B_{\eta}} \lvert D\varphi_{\eta} \rvert \lesssim \eta^{-1} \), Jensen's inequality ensures that
	\[
		\lvert D(\varphi_{\eta} \ast v)(x) \rvert
		\lesssim
		\frac{1}{\eta^{\frac{p-1}{p}}}\biggl(\int_{B_{\eta}} \lvert D\varphi_{\eta}(h)\rvert\eta^{m+\sigma p}\frac{\lvert v(x-h) - v(x) \rvert^{p}}{\lvert h \rvert^{m+\sigma p}}\dd h\biggr)^{\frac{1}{p}}.
	\]
	We conclude as above by using the fact that \( \lvert D\varphi_{\eta} \rvert \leq \eta^{-m-1} \).
\end{proof}

We are now ready to prove Theorem~\ref{thm:main_method_of_projection}.
As explained in the introduction, we follow the approach in~\cite{BousquetPonceVanSchaftingen2014}.
However, as we already mentioned, the range where \( s \geq 1 \) is not an integer is more difficult, as we cannot rely on interpolation, and we need to establish directly estimates on the Gagliardo seminorm.

\begin{proof}[Proof of Theorem~\ref{thm:main_method_of_projection}]
	Let \( u \in W^{s,p}(\Omega;\mN) \).
	By a standard dilation procedure, we may assume that \( u \in W^{s,p}(\omega;\mN) \) for some open set \( \omega \subset \bR^{m} \) such that \( \overline{\Omega} \subset \omega \).
	In particular, there exists \( \gamma > 0 \) such that \( \dist(\Omega;\partial\omega) > 2\gamma \).
	We note that this is the only point in the proof where we use the regularity of \( \Omega \), and that assuming merely the segment condition is sufficient to implement a dilation argument; see e.g.~\cite[Lemma~6.2]{Detaille2023}.
	Therefore, for any \( 0 < \eta \leq \gamma \), the map \( u_{\eta} = \varphi_{\eta} \ast u \) is well-defined and smooth on \( \Omega_{\gamma} = \Omega + B_{\gamma} \).
	After an extension procedure, using e.g.\ a cutoff function, we may assume that \( u_{\eta} \) is actually a smooth (non necessarily \( \mN \)\=/valued) map on the whole \( \bR^{m} \), that coincides with \( \varphi_{\eta} \ast u \) on \( \Omega_{\gamma} \).
	Hence, for any \( a \in \bR^{\nu} \), the map \( v_{\eta,a} = P \circ (u_{\eta}-a) \) satisfies \( v_{\eta,a} \in C^{\infty}(\bR^{m} \setminus S_{\eta,a};\mN) \), where \( S_{\eta,a} = u_{\eta}^{-1}(\Sigma+a) \).
	Recall that \( \Sigma \subset \bR^{\nu} \) is the singular set of the singular projection \( P \) onto \( \mN \).
	Moreover, in the case where \( \Sigma \) is a subskeleton, by adding extra cells if necessary, we may assume that it is a finite union of \( \parens{\nu-\ell} \)\=/hyperplanes.
	By Lemma~\ref{lemma:Sard}, we deduce that \( S_{\eta,a} \) is a finite union of closed submanifolds of \( \bR^{m} \) for almost every \( a \in \bR^{\nu} \),
	and actually a closed submanifold of \( \bR^{m} \) when \( \Sigma \) is a submanifold.
	Additionally, the required estimates on the derivatives of the maps \( v_{\eta,a} \) allowing to deduce that they belong to the class \( \Rclas_{m-\ell} \) follow from the Faà di Bruno formula as in~\eqref{eq:faa_di_bruno} below, combined with point~(ii) of Lemma~\ref{lemma:Sard} and the fact that \( u_{\eta} \) has bounded derivatives on \( \Omega \).
	We are going to show that, for every \( 0 < \eta \leq \gamma \), we may choose such an \( a_{\eta} \in \bR^{\nu} \) so that \( a_{\eta} \to 0 \) as \( \eta \to 0 \)
	and \( v_{\eta,a_{\eta}} \to u \) in \( W^{s,p}(\Omega) \), and this will conclude the proof of the theorem.
	
	For this purpose, we let
	\[
	\alpha = \frac{1}{4}\dist{\parens{\Sigma,\mN}}
	\]
	and we choose \( \psi \in C^{\infty}(\bR^{\nu}) \) such that 
	\begin{enumerate}[label=(\alph*)]
		\item \( \psi(x) = 0 \) if \( \dist{\parens{x,\Sigma}} \leq \alpha \);
		\item \( \psi(x) = 1 \) if \( \dist{\parens{x,\Sigma}} \geq 2\alpha \).
	\end{enumerate}
	We write
	\[
		v_{\eta,a} = w_{\eta,a} + y_{\eta,a}\text{,}
	\]
	where 
	\begin{gather*} 
		w_{\eta,a} = \psi\parens{u_{\eta}-a} v_{\eta,a} = \parens{\psi P}\circ \parens{u_{\eta}-a}
		\intertext{and}
		y_{\eta,a} = (1-\psi(u_{\eta}-a)) v_{\eta,a} = \parens{\parens{1-\psi}P} \circ \parens{u_{\eta}-a}\text{.}
	\end{gather*}
	Since the map \( \psi P \) is smooth with bounded derivatives and since \( u_{\eta} - a_{\eta} \to u \) in \( W^{s,p}(\Omega) \) 
	whenever \( a_{\eta} \to 0 \), using the compactness of \( \mN \) to get a uniform \( L^{\infty} \) bound, we deduce from the continuity of the composition operator --- see for instance~\cite[Chapter~15.3]{BrezisMironescu2021} --- that \( w_{\eta,a_{\eta}} \to u \) in \( W^{s,p}(\Omega) \) provided that we choose \( a_{\eta} \to 0 \).
	It therefore remains to prove that we can choose the \( a_{\eta} \) so that \( y_{\eta,a_{\eta}} \to 0 \) in \( W^{s,p}(\Omega) \) in order to conclude.
	
	For this purpose, we are going to show the average estimate
	\begin{equation}
	\label{eq:average_estimate}
		\int_{B_{\alpha}} \lVert y_{\eta,a} \rVert_{W^{s,p}(\Omega)}^{p}\dd a \to 0
		\quad
		\text{as \( \eta \to 0 \).}
	\end{equation}
	Taking \eqref{eq:average_estimate} for granted, we conclude the proof as follows.
	Markov's inequality ensures that
	\[
		\biggl\lvert\biggl\{ a \in B_{\alpha}\mathpunct{:} \lVert y_{\eta,a} \rVert_{W^{s,p}(\Omega)}^{p} \geq \biggl(\int_{B_{\alpha}} \lVert y_{\eta,a} \rVert_{W^{s,p}(\Omega)}^{p}\dd a\biggr)^{\frac{1}{2}} \biggr\}\biggr\rvert
		\leq 
		\biggl(\int_{B_{\alpha}} \lVert y_{\eta,a} \rVert_{W^{s,p}(\Omega)}^{p}\dd a\biggr)^{\frac{1}{2}}
		\to
		0.
	\]
	Hence, for every \( 0 < \eta \leq \gamma \), we may choose \( a_{\eta} \in B_{\alpha} \) such that \( a_{\eta} \to 0 \) 
	and 
	\[
		\lVert y_{\eta,a} \rVert_{W^{s,p}(\Omega)} \leq \biggl(\int_{B_{\alpha}} \lVert y_{\eta,a} \rVert_{W^{s,p}(\Omega)}^{p}\dd a\biggr)^{\frac{1}{2p}} \to 0\text{,}
	\]
	which proves the theorem.
	It therefore only remains to prove estimate~\eqref{eq:average_estimate}.
	
	We start by the case where \( \sigma = 0 \), and thus \( s = k \in \bN_{\ast} \).
	For every \( j \in \{1,\dotsc,k\} \), the Faà di Bruno formula ensures that 
	\[
		\lvert D^{j}y_{\eta,a}(x) \rvert
		\lesssim
		\sum_{i=1}^{j}\sum_{\substack{1\leq t_{1} \leq \cdots \leq t_{i} \\ t_{1} + \cdots + t_{i} = j}} \lvert D^{i}((1-\psi)P)(u_{\eta}(x)-a) \rvert \lvert D^{t_{1}}u_{\eta}(x) \rvert \cdots \lvert D^{t_{i}}u_{\eta}(x) \rvert.
	\]
	Since \( \psi \) has bounded derivatives, we obtain
	\begin{equation}
	\label{eq:faa_di_bruno}
		\lvert D^{j}y_{\eta,a}(x) \rvert
		\lesssim
		\sum_{i=1}^{j}\sum_{\substack{1\leq t_{1} \leq \cdots \leq t_{i} \\ t_{1} + \cdots + t_{i} = j}} \frac{1}{\dist(u_{\eta}(x)-a,\Sigma)^{i}} \lvert D^{t_{1}}u_{\eta}(x) \rvert \cdots \lvert D^{t_{i}}u_{\eta}(x) \rvert.
	\end{equation}
	As \( \mN \) is compact, we also know that  
	\begin{equation}
	\label{eq:zero_order_estimate}
		\text{\( \lvert y_{\eta,a}(x) \rvert \) is uniformly bounded with respect to \( x \), \(  \eta \), and \( a \).} 
	\end{equation}
	Moreover, by definition of \( \psi \), the map \( y_{\eta,a} \) is supported on \( \{\dist(u_{\eta}-a,\Sigma) \leq 2\alpha\} \).
	We observe that, using the fact that \( u \in \mN \) and the definition of \( \alpha \),
	\[
		\set{\dist(u_{\eta}-a,\Sigma) \leq 2\alpha}
		\subset
		\set{\dist(u_{\eta},\Sigma) \leq 3\alpha}
		\subset
		\set{\lvert u_{\eta} - u \rvert \geq \alpha}.
	\]
	
	Since \( ip \leq sp < \ell \), we have that 
	\[
		\int_{B_{\alpha}} \frac{1}{\dist(u_{\eta}(x)-a,\Sigma)^{ip}}\dd a
		= 
		\int_{B_{\alpha}+u_{\eta}(x)} \frac{1}{\dist(a,\Sigma)^{ip}}\dd a
		\leq
		\int_{B_{R}} \frac{1}{\dist(a,\Sigma)^{ip}}\dd a
		<
		+\infty\text{,}
	\]
	where \( R > 0 \) is chosen sufficiently large so that \( B_{\alpha} + u_{\eta}(x) \subset B_{R} \) for every \( x \in \Omega \) and \( 0 < \eta \leq \gamma \).
	Integrating~\eqref{eq:faa_di_bruno} and~\eqref{eq:zero_order_estimate}, and using Tonelli's theorem and the information on the support of \( y_{\eta,a} \), we deduce that 
	\[
	\int_{B_{\alpha}} \lVert y_{\eta,a} \rVert_{W^{s,p}(\Omega)}^{p}\dd a
	\lesssim
	\int_{\{\lvert u_{\eta}-u \rvert \geq \alpha\}} 1+\sum_{j=1}^{k}\sum_{i=1}^{j}\sum_{\substack{1\leq t_{1} \leq \cdots \leq t_{i} \\ t_{1} + \cdots + t_{i} = j}}
	\lvert D^{t_{1}}u_{\eta} \rvert^{p} \cdots \lvert D^{t_{i}}u_{\eta} \rvert^{p}.
	\]
	Since \( u_{\eta} \to u \) in \( L^{p}(\Omega) \), in particular \( u_{\eta} \to u \) in measure, and therefore \( \lvert\{\lvert u_{\eta}-u \rvert \geq \alpha\}\rvert \to 0 \) as \( \eta \to 0 \).
	Hölder's inequality ensures that, for \( t_{1} + \cdots + t_{i} = j \),
	\[
	\int_{\{\lvert u_{\eta}-u \rvert \geq \alpha\}} \lvert D^{t_{1}}u_{\eta} \rvert^{p} \cdots \lvert D^{t_{i}}u_{\eta} \rvert^{p} 
	\leq
	\biggl(\int_{\{\lvert u_{\eta}-u \rvert \geq \alpha\}} \lvert D^{t_{1}}u_{\eta} \rvert^{\frac{jp}{t_{1}}}\biggr)^{\frac{t_{1}}{j}} \cdots
	\biggl(\int_{\{\lvert u_{\eta}-u \rvert \geq \alpha\}} \lvert D^{t_{i}}u_{\eta} \rvert^{\frac{jp}{t_{i}}}\biggr)^{\frac{t_{i}}{j}}.
	\]
	But as \( u \in L^{\infty}(\Omega) \cap W^{k,p}(\Omega) \), we infer from the classical Gagliardo--Nirenberg inequality --- see~\cite{Gagliardo1959} and~\cite[Lecture~2]{Nirenberg1959} ---  that
	\[ 
	u \in W^{t_{\beta},\frac{kp}{t_{\beta}}}(\Omega) \subset W^{t_{\beta},\frac{jp}{t_{\beta}}}(\Omega)
	\quad
	\text{whenever \( 1 \leq t_{\beta} \leq k \).}
	\] 
	Invoking Lebesgue's lemma, we conclude that 
	\[
	\int_{B_{\alpha}} \lVert y_{\eta,a} \rVert_{W^{s,p}(\Omega)}^{p}\dd a \to 0
	\quad
	\text{as \( \eta \to 0 \),}
	\]
	which establishes estimate~\eqref{eq:average_estimate}.
	
	We now turn to the case \( 0 < \sigma < 1 \), and we assume that \( k \geq 1 \).
	Using the integer order case, we already have 
	\[
	\int_{B_{\alpha}} \lVert y_{\eta,a} \rVert_{W^{k,p}(\Omega)}^{p}\dd a \to 0
	\quad
	\text{as \( \eta \to 0 \),}
	\]
	so that it only remains to prove that 
	\[
	\int_{B_{\alpha}} \lvert D^{k}y_{\eta,a} \rvert_{W^{\sigma,p}(\Omega)}^{p}\dd a \to 0
	\quad
	\text{as \( \eta \to 0 \).}
	\]
	Since \( y_{\eta,a} \) is supported on \( \set{\dist(u_{\eta}-a,\Sigma) \leq 2\alpha} \subset
	\set{\lvert u_{\eta} - u \rvert \geq \alpha} \), we may write
	\begin{multline*}
	\lvert D^{k}y_{\eta,a} \rvert_{W^{\sigma,p}(\Omega)}^{p} \\
	\begin{aligned}
	&\leq
	2\int_{\set{\dist(u_{\eta}\parens{x}-a,\Sigma) \leq 2\alpha}}\int_{\{\dist(u_{\eta}(x)-a,\Sigma) \leq \dist(u_{\eta}(y)-a,\Sigma)\}}
	\frac{\lvert D^{k}y_{\eta,a}(x) - D^{k}y_{\eta,a}(y) \rvert^{p}}{\lvert x-y \rvert^{m+\sigma p}}\dd y\dd x \\
	&\leq
	2\int_{\set{\lvert u_{\eta}\parens{x} - u\parens{x} \rvert \geq \alpha}}\int_{\{\dist(u_{\eta}(x)-a,\Sigma) \leq \dist(u_{\eta}(y)-a,\Sigma)\}}
	\frac{\lvert D^{k}y_{\eta,a}(x) - D^{k}y_{\eta,a}(y) \rvert^{p}}{\lvert x-y \rvert^{m+\sigma p}}\dd y\dd x.
	\end{aligned}
	\end{multline*}
	Given \( x \), \( y \in \Omega \), using the Faà di Bruno formula and the multilinearity of the derivative, we obtain
	\begin{equation}
	\label{eq:difference_derivatives_splitted}
		\abs{D^{k}y_{\eta,a}\parens{x} - D^{k}y_{\eta,a}\parens{y}}
		\lesssim
		\sum_{j=1}^{k}\sum_{\substack{1\leq t_{1} \leq \cdots \leq t_{j} \\ t_{1} + \cdots + t_{j} = k}}\parens[\bigg]{A_{j,t_{1},\dotsc,t_{j}}
		+
		\sum_{i=1}^{j} B_{i,j,t_{1},\dotsc,t_{j}}}\text{,}
	\end{equation}
	where
	\begin{gather*}
		A_{j,t_{1},\dotsc,t_{j}}
		=
		\lvert D^{j}((1-\psi)P)(u_{\eta}(x)-a) - D^{j}((1-\psi)P)(u_{\eta}(y)-a) \rvert \lvert D^{t_{1}}u_{\eta}(x) \rvert \cdots \lvert D^{t_{j}}u_{\eta}(x) \rvert
	\intertext{and}
		B_{i,j,t_{1},\dotsc,t_{j}}
		=
		\lvert D^{j}((1-\psi)P)(u_{\eta}(y)-a) \rvert \parens[\bigg]{\prod_{\substack{1 \leq \beta < i}} \lvert D^{t_{\beta}}u_{\eta}(x) \rvert} \lvert D^{t_{i}}u_{\eta}(x) - D^{t_{i}}u_{\eta}(y) \rvert \parens[\bigg]{\prod_{i < \beta \leq j} \lvert D^{t_{\beta}}u_{\eta}(y) \rvert}.
	\end{gather*}
	To bear in mind more readable terms, the reader may think of the case \( j = 1 \), where one has
	\begin{gather*}
		A_{1} = \abs{D((1-\psi)P)(u_{\eta}(x)-a) - D((1-\psi)P)(u_{\eta}(y)-a)}\abs{Du_{\eta}\parens{x}}
	\intertext{and}
		B_{1} = \abs{D((1-\psi)P)(u_{\eta}(y)-a)}\abs{Du_{\eta}\parens{x}-D_{\eta}\parens{y}}.
	\end{gather*}
	We observe that \( \rvert D^{t}u_{\eta} \lvert \lesssim \eta^{-t} \) for every \( t \in \bN \).
	Therefore,~\eqref{eq:difference_derivatives_splitted} yields
	\begin{equation}
	\label{eq:improved_difference_derivatives_splitted}
		\abs{D^{k}y_{\eta,a}\parens{x} - D^{k}y_{\eta,a}\parens{y}}^{p}
		\lesssim
		\sum_{j=1}^{k}\parens[\bigg]{C_{j} + \sum_{t=1}^{k}D_{j,t}}\text{,}
	\end{equation}
	where
	\begin{gather*}
		C_{j}
		=
		\eta^{-kp}\abs{D^{j}((1-\psi)P)(u_{\eta}(x)-a) - D^{j}((1-\psi)P)(u_{\eta}(y)-a)}^{p}
		\intertext{and}
		D_{j,t}
		=
		\eta^{-(k-t)p}\abs{D^{j}((1-\psi)P)(u_{\eta}(y)-a)}^{p}\abs{D^{t}u_{\eta}(x)-D^{t}u_{\eta}(y)}^{p}.
	\end{gather*}
	
	As for the integer case, since \( jp \leq kp < \ell \), we have
	\[
	\int_{B_{\alpha}} \lvert D^{j}((1-\psi)P)(u_{\eta}(y)-a) \rvert^{p}\dd a
	\lesssim
	\int_{B_{\alpha}} \frac{1}{\dist(u_{\eta}(y)-a,\Sigma)^{jp}}\dd a
	< 
	+\infty.
	\]
	We now integrate with respect to \( x \) and \( y \), split the integral in \( y \) into two parts, and use again the estimate \( \rvert D^{t}u_{\eta} \lvert \lesssim \eta^{-t} \).
	This yields, for any \( r > 0 \),
	\begin{multline*}
		\int_{B_{\alpha}}\int_{\{\lvert u_{\eta}(x)-u(x) \rvert \geq \alpha\}}\int_{\Omega}
		\frac{D_{j,t}}{\lvert x-y \rvert^{m+\sigma p}}\dd y\dd x\dd a \\
		\begin{aligned}
			&
			\begin{multlined}
				\lesssim
				\eta^{-(k+1)p}\int_{\{\lvert u_{\eta}(x)-u(x) \rvert \geq \alpha\}}\int_{B_{r}(x)}\frac{1}{\lvert x-y \rvert^{m+(\sigma-1)p}}\dd y \dd x \\
				+ \eta^{-kp}\int_{\{\lvert u_{\eta}(x)-u(x) \rvert \geq \alpha\}}\int_{\bR^{m} \setminus B_{r}(x)}\frac{1}{\lvert x-y \rvert^{m+\sigma p}}\dd y \dd x 
			\end{multlined} \\
			&\lesssim
			(\eta^{-(k+1)p}r^{(1-\sigma)p}+\eta^{-kp}r^{-\sigma p})\lvert \{\lvert u_{\eta}-u \rvert \geq \alpha\} \rvert.
		\end{aligned}
	\end{multline*}
	Inserting \( r = \eta \), we obtain
	\[
		\int_{B_{\alpha}}\int_{\{\lvert u_{\eta}(x)-u(x) \rvert \geq \alpha\}}\int_{\Omega}
		\frac{D_{j,t}}{\lvert x-y \rvert^{m+\sigma p}}\dd y\dd x\dd a
		\lesssim
		\eta^{-sp}\lvert \{\lvert u_{\eta}-u \rvert \geq \alpha\} \rvert.
	\]
	By the fractional Gagliardo--Nirenberg inequality --- see e.g.~\cite[Corollary~3.2]{BrezisMironescu2001} and~\cite[Theorem~1]{BrezisMironescu2018} --- we have \( u \in W^{\sigma,\frac{sp}{\sigma}}(\Omega) \).
	Invoking the Markov inequality and Lemma~\ref{lemma:fractional_estimates_convolution}, we find
	\begin{equation}
	\label{eq:estimate_tchebyshev}
	\lvert \{\lvert u_{\eta}-u \rvert \geq \alpha\} \rvert
	\leq
	\frac{1}{\alpha^{\frac{sp}{\sigma}}}\int_{\{\lvert u_{\eta}-u \rvert \geq \alpha\}} \lvert u_{\eta} - u \rvert^{\frac{sp}{\sigma}}
	\lesssim
	\eta^{sp}\int_{\{\lvert u_{\eta}-u \rvert \geq \alpha\}} (D^{\sigma,\frac{sp}{\sigma}}u)^{\frac{sp}{\sigma}}.
	\end{equation}
	Hence, using Lebesgue's lemma, we conclude that 
	\[
	\eta^{-sp}\lvert \{\lvert u_{\eta}-u \rvert \geq \alpha\} \rvert
	\lesssim
	\int_{\{\lvert u_{\eta}-u \rvert \geq \alpha\}} (D^{\sigma,\frac{sp}{\sigma}}u)^{\frac{sp}{\sigma}}
	\to
	0
	\quad
	\text{as \( \eta \to 0 \).}
	\]
	This achieves to estimate the second term in~\eqref{eq:improved_difference_derivatives_splitted}.
	
	For the first term, we also split the integral with respect to \( y \) into two parts, but this time we use Lemma~\ref{lemma:mean_value_theorem_upPi} to estimate \( \abs{D^{j}((1-\psi)P)(u_{\eta}(x)-a) - D^{j}((1-\psi)P)(u_{\eta}(y)-a)} \) in the ball \( B_{r}\parens{x} \).
	This yields, for every \( r > 0 \),
	\begin{multline*}
	\int_{B_{\alpha}}\int_{\set{\lvert u_{\eta}\parens{x} - u\parens{x} \rvert \geq \alpha}}\int_{\{\dist(u_{\eta}(x)-a,\Sigma) \leq \dist(u_{\eta}(y)-a,\Sigma)\}} 
	\frac{C_{j}}{\lvert x-y \rvert^{m+\sigma p}}\dd y\dd x\dd a \\
	\begin{multlined}
	\lesssim
	\eta^{-kp}\int_{\{\lvert u_{\eta}(x)-u(x) \rvert \geq \alpha\}} \int_{B_{\alpha}}
	\biggl(\int_{B_{r}(x)}\frac{1}{\dist(u_{\eta}(x)-a,\Sigma)^{(j+1)p}}\frac{\lvert u_{\eta}(x)-u_{\eta}(y) \rvert^{p}}{\lvert x-y \rvert^{m+\sigma p}}\dd y \\ 
	+ \int_{\bR^{m}\setminus B_{r}(x)}\frac{1}{\dist(u_{\eta}(x)-a,\Sigma)^{jp}}\frac{1}{\lvert x-y \rvert^{m+\sigma p}}\dd y\biggr)\dd a\dd x.
	\end{multlined}
	\end{multline*}
	We estimate
	\begin{gather*}
	\int_{B_{r}(x)}\frac{1}{\dist(u_{\eta}(x)-a,\Sigma)^{(j+1)p}}\frac{\lvert u_{\eta}(x)-u_{\eta}(y) \rvert^{p}}{\lvert x-y \rvert^{m+\sigma p}}\dd y
	\lesssim
	r^{(1-\sigma)p}\eta^{-p}\frac{1}{\dist(u_{\eta}(x)-a,\Sigma)^{(j+1)p}}
	\intertext{and}
	\int_{\bR^{m}\setminus B_{r}(x)}\frac{1}{\dist(u_{\eta}(x)-a,\Sigma)^{jp}}\frac{1}{\lvert x-y \rvert^{m+\sigma p}}\dd y
	\lesssim
	r^{-\sigma p}\frac{1}{\dist(u_{\eta}(x)-a,\Sigma)^{jp}}.
	\end{gather*}
	Inserting \( r = \eta\dist(u_{\eta}(x)-a,\Sigma) \), we obtain
	\begin{multline*}
		\int_{B_{\alpha}}\int_{\set{\lvert u_{\eta}\parens{x} - u\parens{x} \rvert \geq \alpha}}\int_{\{\dist(u_{\eta}(x)-a,\Sigma) \leq \dist(u_{\eta}(y)-a,\Sigma)\}} 
		\frac{C_{j}}{\lvert x-y \rvert^{m+\sigma p}}\dd y\dd x\dd a \\
		\lesssim
		\eta^{-sp}\int_{\{\lvert u_{\eta}(x)-u(x) \rvert \geq \alpha\}} \int_{B_{\alpha}}
		\frac{1}{\dist(u_{\eta}(x)-a,\Sigma)^{(j+\sigma)p}}\dd a\dd x 
		\lesssim
		\eta^{-sp}\lvert \{\lvert u_{\eta}-u \rvert \geq \alpha\} \rvert\text{,}
	\end{multline*}
	where, in the last inequality, we once more made use of the fact that \( sp < \ell \) .
	We observe interestingly that our choice of \( r \) is not so common.
	Indeed, in such an optimization argument, one usually takes \( r \) to be some suitable power of \( \eta \).
	Here, not only our choice is more complex, but it also depends on \( x \) and \( a \), the outer variables of integration.
	Using estimate~\eqref{eq:estimate_tchebyshev}, we conclude that the above quantity goes to \( 0 \) as \( \eta \to 0 \), which finishes to estimate the first term in~\eqref{eq:improved_difference_derivatives_splitted}.
	Both terms being controlled, this establishes average estimate~\eqref{eq:average_estimate}, therefore concluding the proof of the theorem when \( k \geq 1 \) and \( 0 < \sigma < 1 \).
	
	The case \( k = 0 \) and \( 0 < \sigma < 1 \) is similar, and actually simpler.
	Indeed, as no derivatives are involved, we have to estimate the difference \( ((1-\psi)P)(u_{\eta}(x)-a) - ((1-\psi)P)(u_{\eta}(y)-a) \), which is directly performed with the same technique as for the \( C_{j} \) term in the previous case.
	Moreover, this range of parameters was already treated in~\cite{BousquetPonceVanSchaftingen2014} with a different technique, interpolating with the first order term using the Gagliardo--Nirenberg inequality.
	We therefore omit the details of the argument.
\end{proof}

\subsection{Concluding thoughts: What singular set can we hope for?}
\label{subsect:non_existence_proj}

We conclude this section by considering the question of existence of a singular projection whose singular set is a closed submanifold of \( \bR^{\nu} \).
We have seen in Lemmas~\ref{lemma:necessary_condition_projection} and~\ref{lemma:existence_of_projection} that the \( \parens{\ell-2} \)\=/connectedness of the target manifold \( \mN \) is a necessary and sufficient condition for the existence of a singular projection, and that the proof produces a singular projection whose singular set is a subskeleton, and therefore exhibits crossings.
Since projections whose singular set do not have crossings allow to obtain the density of the class \( \Rsmooth \) instead of the class \( \Rclas \), it is natural to ask whether or not it is always possible to improve singular projections so that their singular set is a submanifold.
That is: Does every \( \parens{\ell-2} \)\=/connected manifold admit a singular projection whose singular set is a submanifold?

Although we are not able to answer this question, we give in this section a family of examples suggesting that there is little hope that the answer is \emph{yes}.
For every \( \ell \in \bN_{\ast} \), we let \( \mN_{\ell} \) denote a connected sum of \( \ell \) copies of the \( 2 \)\-/dimensional torus, embedded into \( \bR^{3} \).
Since \( \mN_{\ell} \) is connected, it admits a \( 2 \)\-/ singular projection.
Actually, this projection may even be taken to be the nearest point projection.
For \( \mN_{1} = \bT^{2} \), its singular set is the circle forming the core of the torus and a line passing through the hole of the torus.
For \( \mN_{2} = \bT^{2} \# \bT^{2} \), the two-holed torus, its singular set is the eight-figure forming the core of the torus and two lines, each one passing through one of the holes of \( \mN_{2} \).
One may notice that, in those two examples, the singular set of the natural singular projection onto \( \bT^{2} \) is a \( 1 \)\-/dimensional submanifold of \( \bR^{3} \), while the singular set of the natural singular projection onto \( \bT^{2} \# \bT^{2} \) is only a finite union of \( 1 \)\-/dimensional submanifolds of \( \bR^{3} \).
It is therefore natural to ask whether or not this can be improved to have a singular projection onto \( \bT^{2} \# \bT^{2} \) whose singular set would be a \( 1 \)\-/dimensional submanifold of \( \bR^{3} \).
The same question arises for \( \mN_{\ell} \) for every \( \ell \geq 2 \).

\begin{proposition}
\label{prop:no_homotopy_retract_torus}
	If \( \ell \geq 2 \), then there is no homotopy retract \( P \colon \bS^{3} \setminus \mS \to \mN_{\ell} \) such that \( \mS \) is a \( 1 \)\=/dimensional submanifold of \( \bS^{3} \).
\end{proposition}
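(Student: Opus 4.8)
The plan is to promote the homotopy retract to a split injection of rational cohomology \emph{rings}, and then to exploit the very rigid shape of the cup product on the cohomology of a link complement --- a rigidity that is present precisely because \( \mS \) is a genuine submanifold, and that fails in its absence.

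Since \( \mS \) is a closed \( 1 \)\=/submanifold of the compact manifold \( \bS^{3} \), it is a disjoint union of finitely many circles, and \( \bS^{3} \setminus \mS \) deformation retracts onto the compact orientable \( 3 \)\=/manifold \( X = \bS^{3} \setminus \Int N(\mS) \), whose boundary \( \partial X = T_{1} \sqcup \dotsb \sqcup T_{c} \) is a disjoint union of tori. As \( \mN_{\ell} \) is compact and disjoint from \( \mS \), we may assume \( \mN_{\ell} \subset X \), and the restriction of \( P \) gives a map \( P \colon X \to \mN_{\ell} \) together with the inclusion \( \iota \colon \mN_{\ell} \hookrightarrow X \) satisfying \( P \circ \iota \simeq \id_{\mN_{\ell}} \). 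On rational cohomology this gives \( \iota^{\ast} \circ P^{\ast} = \id \), so \( P^{\ast} \) is a homomorphism of graded rings that is injective and split by \( \iota^{\ast} \). Consequently \( V = P^{\ast}\bigl( H^{1}(\mN_{\ell};\bQ) \bigr) \) is a subspace of \( H^{1}(X;\bQ) \) of dimension \( 2\ell \), and \( \zeta = P^{\ast}(\omega) \) is a nonzero element of \( H^{2}(X;\bQ) \), where \( \omega \) generates \( H^{2}(\mN_{\ell};\bQ) \cong \bQ \). Writing \( \alpha \smile \beta = \langle \alpha, \beta \rangle\, \omega \) for \( \alpha, \beta \in H^{1}(\mN_{\ell};\bQ) \) --- where \( \langle\,\cdot\,, \cdot\,\rangle \) is the intersection form of the genus\=/\( \ell \) surface, which is alternating and nondegenerate on \( H^{1}(\mN_{\ell};\bQ) \cong \bQ^{2\ell} \) --- and applying the ring homomorphism \( P^{\ast} \), we obtain
\[
	v \smile w = \omega_{V}(v, w)\, \zeta \qquad \text{for all } v, w \in V,
\]
where \( \omega_{V} \) is an alternating bilinear form on \( V \) which is nondegenerate, hence of rank \( 2\ell \).

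Next I read the cup product of \( X \) off its toroidal boundary. The restriction map \( \rho \colon H^{2}(X;\bQ) \to H^{2}(\partial X;\bQ) = \bigoplus_{i=1}^{c} H^{2}(T_{i};\bQ) \) is injective: through the long exact sequence (in cohomology) of the pair \( (X, \partial X) \) and Poincar\'e--Lefschetz duality this reduces to the injectivity of \( H^{1}(X;\bQ) \to H^{1}(\partial X;\bQ) \), which holds because a class of \( H^{1}(X;\bQ) \) restricting to zero on every \( T_{i} \) pairs trivially with every meridian \( \mu_{i} \subset T_{i} \), and the meridians generate \( H_{1}(X;\bQ) \). Since restriction to \( T_{i} \) is a ring homomorphism, the \( i \)\=/th component of \( \rho(\alpha \smile \beta) \) is \( (\alpha|_{T_{i}}) \smile (\beta|_{T_{i}}) \); under the cup product isomorphism \( \Lambda^{2} H^{1}(T_{i};\bQ) \xrightarrow{\ \sim\ } H^{2}(T_{i};\bQ) \) (both spaces being one\=/dimensional) this equals \( \Phi_{i}(\alpha, \beta) \), the pullback along the restriction \( H^{1}(X;\bQ) \to H^{1}(T_{i};\bQ) \) of a symplectic form on the two\=/dimensional space \( H^{1}(T_{i};\bQ) \). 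In particular each \( \Phi_{i} \) is an alternating form on \( H^{1}(X;\bQ) \) of rank at most \( 2 \).

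These two facts collide. Since \( \rho \) is injective and \( \zeta \neq 0 \), there is an index \( i \) with \( z_{i} = \rho(\zeta)_{i} \neq 0 \) in \( H^{2}(T_{i};\bQ) \cong \bQ \). Taking the \( i \)\=/th component of \( \rho \) in the identity \( v \smile w = \omega_{V}(v,w)\,\zeta \) gives \( \Phi_{i}(v,w) = z_{i}\, \omega_{V}(v,w) \) for all \( v, w \in V \), that is, \( \omega_{V} = z_{i}^{-1}\, \Phi_{i}|_{V} \). Since \( \Phi_{i} \) has rank at most \( 2 \), so does \( \omega_{V} \); but \( \omega_{V} \) is nondegenerate on a space of dimension \( 2\ell \), so \( 2\ell \leq 2 \), contradicting \( \ell \geq 2 \). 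The heart of the argument --- and the step that I expect to require the most care to write out --- is the structural statement that the cup product on \( H^{\ast}(X;\bQ) \) of a link complement is detected on the boundary tori by a family of alternating forms of rank at most \( 2 \); this is exactly where the hypothesis that \( \mS \) is a submanifold intervenes, since it guarantees that \( \partial N(\mS) \) is a union of tori rather than of higher\=/genus surfaces, and it is consistent with the fact that the conclusion genuinely fails for \( \ell = 1 \), the complement of the Hopf link deformation retracting onto \( \bT^{2} = \mN_{1} \).
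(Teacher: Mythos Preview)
Your proof is correct and takes a genuinely different route from the paper's. The paper's argument is purely additive: it uses the full homotopy equivalence to identify \( H_{\ast}(\bS^{3}\setminus\mS;\bZ) \) with \( H_{\ast}(\mN_{\ell};\bZ) \), feeds this into the long exact sequence of the pair \( (\bS^{3},\bS^{3}\setminus\mS) \) together with Poincar\'e and Poincar\'e--Lefschetz duality to compute \( H_{0}(\mS)\cong\bZ^{2\ell} \) and \( H_{1}(\mS)\cong\bZ^{2} \), and reaches a contradiction because a closed \( 1 \)\=/manifold has \( H_{0} \) and \( H_{1} \) of equal rank. Your argument is multiplicative: you use only the split injection \( P^{\ast} \) of rational cohomology \emph{rings}, transport the rank\=/\( 2\ell \) symplectic form of \( H^{1}(\mN_{\ell};\bQ) \) into \( H^{1}(X;\bQ) \), and then observe that the cup product on a link exterior is detected on the boundary tori, each of which contributes a form of rank at most \( 2 \). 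This buys you a slightly stronger statement --- you never invoke \( \iota\circ P\simeq\id \), only \( P\circ\iota\simeq\id \) --- and a conceptual explanation of the obstruction as a ``symplectic rank'' mismatch; in exchange, the paper's proof is more elementary (no ring structure) and squeezes out a sharper by\=/product, namely that for \( \ell=1 \) the singular set must have \emph{exactly} two components, whereas your argument only yields \( c\geq 2\ell \) via \( \dim H^{1}(X;\bQ)=c \).
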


We have stated Proposition~\ref{prop:no_homotopy_retract_torus} with \( \bS^{3} \) instead of \( \bR^{3} \), but this is equivalent up to compactification in the case of maps that are constant at infinity --- or if the singular set passes through the point at infinity, as it is the case for the \( \mN_{\ell} \) above.
In Definition~\ref{def:singular_projection}, singular projections were required to be continuous retracts of \( \bR^{\nu} \setminus \Sigma \) into \( \mN \), that is, \( P \circ i \colon \mN \to \mN = \id_{\mN} \), where \( i \) is the inclusion of \( \mN \) into \( \bR^{\nu} \setminus \Sigma \).
In Proposition~\ref{prop:no_homotopy_retract_torus}, we consider instead homotopy retracts, that is, \( P \) should in addition satisfy that \( i \circ P \colon \bR^{\nu} \setminus \Sigma \to \bR^{\nu} \setminus \Sigma \) is homotopic to the identity map.
In particular, \( P \) induces a homotopy equivalence between \( \bR^{\nu} \setminus \Sigma \) and \( \mN \).
This is a stronger requirement than asking merely that \( P \) is a continuous retract.
Nevertheless, one may check that the usual constructions for a singular projection, like in Lemma~\ref{lemma:existence_of_projection}, produce a homotopy retract that is constant at infinity, so Proposition~\ref{prop:no_homotopy_retract_torus} leaves little hope to find a singular projection into \( \mN_{\ell} \) whose singular set would be a submanifold when \( \ell \geq 2 \).

\begin{proof}
	Assume by contradiction that there exists a homotopy retract \( P \colon \bS^{3} \setminus \mS \to \mN_{\ell} \), where \( \mS \) is a submanifold of \( \bS^{3} \).
	We start by computing the homology groups of \( \bS^{3} \), \( \bS^{3} \setminus \mS \), and the relative homology groups of \( \bS^{3} \) relatively to \( \bS^{3} \setminus \mS \).
	The first homology groups of the sphere are given by
	\[
		H_{0}\parens{\bS^{3}} = \bZ,
		\quad
		H_{1}\parens{\bS^{3}} = \set{0},
		\quad
		H_{2}\parens{\bS^{3}} = \set{0},
		\quad
		H_{3}\parens{\bS^{3}} = \bZ.
	\]
	We note that we always implicitly consider homology with integer coefficients.
	On the other hand, since we assumed the existence of the homotopy retract \( P \), it follows that \( \bS^{3} \setminus \mS \) and \( \mN_{\ell} \) share the same homology groups: \( H_{j}\parens{\bS^{3} \setminus \mS} = H_{j}\parens{\mN_{\ell}} \) for every \( j \in \bN \).
	Therefore, we obtain
	\[
		H_{0}\parens{\bS^{3} \setminus \mS} = \bZ,
		\quad
		H_{1}\parens{\bS^{3} \setminus \mS} = \bZ^{2\ell},
		\quad
		H_{2}\parens{\bS^{3} \setminus \mS} = \bZ,
		\quad
		H_{3}\parens{\bS^{3} \setminus \mS} = \set{0}.
	\]
	To obtain the homology groups \( H_{k}\parens{\bS^{3},\bS^{3} \setminus \mS} \), we use the long exact sequence of relative homology groups
	\[
		\begin{tikzcd}[cramped]
			\cdots \arrow[r] & H_{k}\parens{\bS^{3}} \arrow[r] & H_{k}\parens{\bS^{3},\bS^{3} \setminus \mS} \arrow[r] & H_{k-1}\parens{\bS^{3} \setminus \mS} \arrow[r] & H_{k-1}\parens{\bS^{3}} \arrow[r] & \cdots\text{.}
		\end{tikzcd}
	\]
	The portion of this sequence for \( k = 2 \) yields
	\[
		\begin{tikzcd}
		\set{0} \arrow[r] & H_{2}\parens{\bS^{3},\bS^{3} \setminus \mS} \arrow[r] & \bZ^{2\ell} \arrow[r] & \set{0}\text{,}
		\end{tikzcd}
	\]
	which implies that \( H_{2}\parens{\bS^{3},\bS^{3} \setminus \mS} = \bZ^{2\ell} \).
	We now examine the portion of the sequence with \( k = 3 \), which translates into
	\[
		\begin{tikzcd}
		\set{0} \arrow[r] & \bZ \arrow[r] & H_{3}\parens{\bS^{3},\bS^{3} \setminus \mS} \arrow[r] & \bZ \arrow[r] & \set{0}\text{.}
		\end{tikzcd}
	\]
	As \( \bZ \) is a free \( \bZ \)\=/module, the above short exact sequence of abelian groups splits, which implies that necessarily \( H_{3}\parens{\bS^{3},\bS^{3} \setminus \mS} = \bZ \oplus \bZ \).
	
	We now recall two important duality principles concerning homology groups.
	The first one is \emph{Poincaré duality}: If \( \mM \) is a closed orientable \( m \)\-/dimensional manifold, then the homology group \( H_{m-k}\parens{\mM} \) is isomorphic to the cohomology group \( H^{k}\parens{\mM} \) for every \( k \in \set{0,\dotsc,m} \); see e.g.~\cite[Theorem~3.30]{Hatcher2002}.
	The second one is the \emph{Poincaré--Lefschetz duality}: If \( K \) is a compact locally contractible subspace of a closed orientable \( m \)\=/dimensional manifold \( \mM \), then \( H_{k}\parens{\mM, \mM \setminus K} \cong H^{m-k}\parens{K} \) for every \( k \in \set{0,\dotsc,m} \); see e.g.~\cite[Theorem~3.44]{Hatcher2002}.
	Applied to \( \mM = \bS^{3} \) and \( K = \mS \), the Poincaré--Lefschetz duality yields 
	\[
		H^{0}\parens{\mS} \cong H_{3}\parens{\bS^{3},\bS^{3} \setminus \mS} = \bZ \oplus \bZ
		\quad
		\text{and}
		\quad
		H^{1}\parens{\mS} \cong H_{2}\parens{\bS^{3},\bS^{3} \setminus \mS} = \bZ^{2\ell}\text{.}
	\]
	On the other hand, since \( \mS \) is assumed to be a \( 1 \)\=/dimensional submanifold of \( \bS^{3} \), the Poincaré duality implies that 
	\[
		H_{0}\parens{\mS} \cong H^{1}\parens{\mS} \cong \bZ^{2\ell}
		\quad
		\text{and}
		\quad
		H_{1}\parens{\mS} \cong H^{0}\parens{\mS} \cong \bZ \oplus \bZ.
	\]
	However, for a \( 1 \)\=/dimensional manifold, the groups \( H_{0} \) and \( H_{1} \) both coincide with a direct sum of the same number of copies of \( \bZ \), one for each connected component.
	Therefore, the above situation is only possible for \( \ell = 1 \), which concludes the proof.
\end{proof}

We note along the way that, when \( \ell = 1 \), the above proof shows that the singular set of a homotopy retract to \( \bT^{2} \) must have exactly two connected components.
This is coherent with what we obtain with the natural construction described above, and shows that the singular set obtained there cannot be improved to be made of only one connected component.

A similar reasoning could be carried out in other situations, provided one is able to compute the required homology groups.
For instance, one could examine the situation for non orientable surfaces, relying on homology with coefficients in \( {\bZ} / {2\bZ} \) so that Poincaré duality is also available.

\section{The general case: the crossings removal procedure}
\label{sect:general_case}

\subsection{The idea of the method}
\label{subsect:part_cases}

In this section, we consider the case of a general target manifold \( \mN \), non necessarily \( \parens{\floor{sp}-1} \)\=/connected.
In this context where the method of projection cannot be applied, all currently available proofs of the density of the class \( \Rclas \) and its variants rely on modifying the map \( u \in W^{s,p}\parens{\Omega;\mN} \) to be approximated on its domain --- in contrast with the method of projection, which consists in working on the codomain.
In the most general case, there are essentially two ideas of proof.
The first one is the method of good and bad cubes, introduced by Bethuel~\cite{Bethuel1991} to handle the case \( W^{1,p} \), and later pursued in the general case \( W^{s,p} \) with \( 0 < s < +\infty \)~\cite{BousquetPonceVanSchaftingen2015, Detaille2023}.
The second one is the averaging argument devised by Brezis and Mironescu~\cite{BrezisMironescu2015}, suited for the case \( 0 < s < 1 \).

Both these ideas require to decompose the domain \( Q^{m} \) into a small grid, and rely crucially on homogeneous extension.
In Bethuel's approach, this procedure is used to approximate \( u \) on the bad cubes of the grid, while in Brezis and Mironescu's approach, it is used on all the cubes of the grid.
By the very definition of homogeneous extension, it is clear that this technique \emph{necessarily} produces maps whose singular set exhibits crossings --- except in the case of point singularities.

The key ingredient in the homogeneous extension procedure is the standard retraction \( \overline{Q^{m}} \setminus \mT^{\ell^{\ast}} \to \mK^{\ell} \), where we recall that \( K^{\ell} \) is the \( \ell \)\=/skeleton of the unit cube, and \( T^{\ell^{\ast}} \) its dual skeleton.
In order to perform approximation with maps having a singular set \emph{without} crossings, a natural question would be whether or not there exists another retraction \( g \colon \overline{Q^{m}} \setminus \mS \to \mK^{\ell} \), where here \( \mS \) would be an \( \ell^{\ast} \)\=/submanifold of \( \bR^{m} \), that is, without crossings.
This would correspond to a modified version of the usual retraction \( \overline{Q^{m}} \setminus \mT^{\ell^{\ast}} \to \mK^{\ell} \), where the singular set has been uncrossed.

It turns out that such a retraction \emph{does} exist, and is actually quite simple to construct.
This may come as very surprising, in view of Proposition~\ref{prop:no_homotopy_retract_torus}.
We note importantly that this is not due to the fact that Proposition~\ref{prop:no_homotopy_retract_torus} requires a homotopy retract, since the map that we are going to construct is actually a homotopy retract.
The possibility to obtain such a retraction is instead due to the fact that here, we only require it to be a retraction on a \( 1 \)\=/dimensional set, while in Proposition~\ref{prop:no_homotopy_retract_torus}, we imposed a \( 2 \)\=/dimensional constraint.
This allows for more freedom in our construction.

The procedure to build this retraction \( g \) is explained below, with \( m = 3 \) to allow for illustration.
The starting point is the zero-homogeneous map \( x \mapsto x/\abs{x}_{\infty} \), which retracts \( \overline{Q^{3}} \setminus \set{0} \) onto \( \partial Q^{3} \).
Choosing the center of projection to be a point \emph{above} \( Q^{3} \) instead of \emph{inside} \( Q^{3} \) yields a continuous map \( h \) defined on the whole \( \overline{Q^{3}} \), that retracts \( \overline{Q^{3}} \) onto its four lateral faces and its lower face.
We then postcompose the map \( h \) with the usual retraction of these five faces minus their centers onto their boundary, which is exactly \( \mK^{1} \).
This produces the expected continuous retraction \( g \colon \overline{Q^{3}} \setminus \mS \to \mK^{1} \), where \( \mS \) is the inverse image of the centers of the five aforementioned faces under \( h \), which consists of five line segments that emanate from those centers and end up on the top face of \( Q^{3} \).
Those lines do not cross inside \( Q^{3} \), but they would intersect at the center of projection above \( Q^{3} \) if they were extended up to there. 
The situation is depicted on Figure~\ref{fig:retraction_g}, where the singularities of \( g \) are represented in red, and extended up to the projection point to help visualization. 

\begin{figure}[ht]
	\centering
	\includegraphics[page=2]{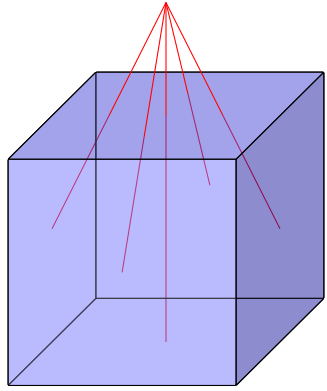}
	\caption{The retraction \( g \) and its singular set}
	\label{fig:retraction_g}
\end{figure}

Another way of looking at this construction is the following.
Viewed from the projection point lying slightly above \( Q^{3} \), the set of all faces except the top one looks like on Figure~\ref{fig:projection_planar_view}, with the centers of the faces represented in red.
The retraction \( g \colon \overline{Q^{3}} \setminus \mS \to \mK^{1} \) may then be viewed as a vertical projection onto the set depicted on Figure~\ref{fig:projection_planar_view}, followed by the retraction onto the edges away from the red centers.
The singular set would then look like vertical lines starting from the red centers.

\begin{figure}[ht]
	\centering
	\includegraphics[page=3]{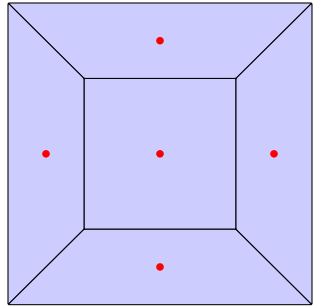}
	\caption{Planar view of the faces of \( Q^{3} \) without the top one}
	\label{fig:projection_planar_view}
\end{figure}

As a final comment concerning this model construction, we note that it appears to be natural in the context of homology theory.
Indeed, the first homology group of \( \mK^{1} \) is given by \( H_{1}\parens{\mK^{1}} = \bZ^{5} \), with one cycle generated by the boundary of each face of \( \mK^{1} \) except the top one which is the sum of all five others.
This is clearly seen on \( \overline{Q^{3}} \setminus \mS \): there is one cycle winding around each segment constituting \( \mS \), each one corresponding to the boundary of one of the five lowest faces of \( \overline{Q^{3}} \), and the sum of all of them corresponds to the boundary of the top face.
This suggests that our construction is somehow adapted to the homology of \( \mK^{1} \).
Moreover, this can be used to prove that the set \( \mS \) must have exactly five connected components \emph{inside of \( Q^{3} \)}, so that our construction is optimal in this sense.

Having at our disposal the smooth retraction \( g \) is a first step towards the proof of the density of the class of maps with uncrossed singular set \( \Rsmooth_{m-\floor{sp}-1} \) in \( W^{s,p} \), but we are not done yet.
As the more rigid class \( \Rrig_{m-\floor{sp}-1}\parens{Q^{m};\mN} \) is dense in \( W^{s,p}\parens{Q^{m};\mN} \), it suffices to show that every map that belongs to \( \Rrig_{m-\floor{sp}-1}\parens{Q^{m};\mN} \) may be approximated in \( W^{s,p} \) by maps in \( \Rsmooth_{m-\floor{sp}-1}\parens{Q^{m};\mN} \).
Using a dilation argument if necessary, we may furthermore assume that the restriction of the singular set of those maps to \( \overline{Q^{m}} \) is the dual skeleton of a cubication of \( \overline{Q^{m}} \).
However, as it is constructed above, the map \( g \) only uncrosses the singularities inside \emph{one} cube, not the full set of singularities of a map in \( \Rrig_{m-\floor{sp}-1} \).
Moreover, this procedure comes without any guarantee that the modified map is close to the original one in the \( W^{s,p} \) distance.

As the general constructions are quite involved, the remaining of this section is devoted to some particular cases to explain the main ideas in a more simple setting, allowing for less involved notation and illustrative figures.
We start by presenting the full approximation procedure in \( W^{1,p}\parens{Q^{3};\mN} \) with \( 1 \leq p < 2 \), which corresponds to the case of \emph{line} singularities.
This is the content of Proposition~\ref{prop:uncrossing_lines} below.
We then briefly explain the additional difficulties that arise when moving towards the general case.
For this purpose, we explain how to obtain a smooth construction suited to the full range \( 0 < s < +\infty \), and we also sketch the topological part of our construction to uncross \emph{plane} singularities in \( Q^{3} \).
The proof of Theorem~\ref{theorem:main} in the general case is postponed to Section~\ref{subsect:general_procedure}.

\begin{proposition}
\label{prop:uncrossing_lines}
	Let \( u \in \Rrig_{1}\parens{Q^{3};\mN} \) and \( 1 \leq p < 2 \).
	There exists a sequence \( \parens{u_{n}}_{n \in \bN} \) in \( W^{1,p}\parens{Q^{3};\mN} \) such that \( u_{n} \to u \) in \( W^{1,p}\parens{Q^{3};\mN} \) and such that each \( u_{n} \) is locally Lipschitz outside of a \( 1 \)\=/dimensional Lipschitz submanifold \( \mS_{u_{n}} \) of \( Q^{3} \). 
\end{proposition}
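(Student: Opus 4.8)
The plan is to modify \( u \) only on a set of small measure around its genuine singular set so as to uncross the singularities, and then to recover closeness in \( W^{1,p} \) by a shrinking argument; the key topological device is the model retraction \( g \) discussed above. For the normalisation: up to a dilation and one preliminary approximation inside \( \Rrig_{1} \), I may assume that \( u \) is produced by Bethuel's good/bad cube scheme at scale \( \eta \), so there is a cubication \( \mathcal{Q} \) of \( \overline{Q^{3}} \) by cubes of inradius \( \eta \), the total volume of the \emph{bad} cubes tends to \( 0 \) as \( \eta\to 0 \), \( u \) is smooth on every good cube, and on every bad cube \( Q \) one has \( u=v\circ r \), where \( \mL^{1} \) is the \( 1 \)-skeleton of \( \mathcal{Q} \), \( v=u\restrfun{\mL^{1}} \) is Lipschitz with \( \abs{Dv}\lesssim 1/\eta \), and \( r \) is the standard iterated-homogeneous retraction onto \( \mL^{1} \). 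The genuine singular set of \( u \) is then the dual \( 1 \)-skeleton of the bad cubes, and its self-intersections sit precisely at the centres of the bad cubes, where six dual segments meet.

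\emph{Topological uncrossing.} Fix a direction \( e_{3} \). In each bad cube \( Q \), I replace the radial projection from the centre \( c_{Q} \) --- which is exactly what creates the six-armed crossing --- by the \( g \)-type retraction obtained by projecting from a point lying just off \( Q \) along \( \pm e_{3} \): this opens the spider \( \mS_{u}\cap Q \) into arcs that run roughly along \( e_{3} \), avoid \( c_{Q} \), but reach \( \partial Q \) at several points near the face centres. The extra singular points thus created on \( \partial Q \) are joined, through thin tubes running inside the adjacent good cubes, either to the matching points on other bad cubes or out to \( \partial Q^{3} \); since \( m=3 \) there is enough room to route these connecting tubes pairwise disjointly, and alternating the orientation of the off-cube projection from cube to cube along each \( e_{3} \)-column makes the local pieces match across the shared \( 2 \)-faces (the \( 2 \)-faces parallel to \( e_{3} \) match automatically). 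Carrying this out yields, for each small parameter \( \rho \), a map \( \hat{u}_{\rho} \) which (i) coincides with \( u \) outside a set \( \omega_{\rho} \) --- the union of the bad cubes and the \( \rho \)-tubes --- whose measure tends to \( 0 \), and (ii) is locally Lipschitz outside a global Lipschitz \( 1 \)-submanifold \( \hat{\mS}_{\rho}\subset Q^{3} \), near which \( \hat{u}_{\rho} \) retains a homogeneous structure transverse to \( \hat{\mS}_{\rho} \). Thus \( \hat{u}_{\rho}\in\Rsmooth_{1}\parens{Q^{3};\mN} \).

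\emph{Estimates and conclusion.} It remains to choose the parameters so that \( \hat{u}_{\rho}\to u \) in \( W^{1,p} \). Since \( \hat{u}_{\rho}=u \) off \( \omega_{\rho} \) and \( \abs{\omega_{\rho}}\to 0 \), we get \( \hat{u}_{\rho}\to u \) in \( L^{p} \) and \( \int_{\omega_{\rho}}\abs{Du}^{p}\to 0 \) by absolute continuity of the integral, so the only term to control is \( \int_{\omega_{\rho}}\abs{D\hat{u}_{\rho}}^{p} \). On \( \omega_{\rho} \) the map \( \hat{u}_{\rho} \) is \( v \) composed with an opened-up retraction, whence \( \abs{D\hat{u}_{\rho}(x)}\lesssim \tfrac{1}{\eta}\,\tfrac{\rho}{\dist\parens{x,\hat{\mS}_{\rho}}} \) away from the thin interpolation collars and \( \abs{D\hat{u}_{\rho}(x)}\lesssim 1/\rho \) on them; because \( p<2 \), the integral \( \int \dist\parens{\cdot,\hat{\mS}_{\rho}}^{-p} \) over a \( \rho \)-tube about a \( 1 \)-submanifold converges, and a direct computation bounds \( \int_{\omega_{\rho}}\abs{D\hat{u}_{\rho}}^{p} \) by a quantity of the form \( \eta^{-a}\rho^{b} \) with \( b>0 \). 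The systematic device that performs ``open up on a prescribed small region, then compress that region'' with the required energy control is the shrinking procedure of \cite{BousquetPonceVanSchaftingen2015}, which I invoke; it lets me pick \( \rho=\rho\parens{\eta} \) small enough that \( \int_{\omega_{\rho}}\abs{D\hat{u}_{\rho}}^{p}\to 0 \). Taking \( \eta=\eta_{n}\to 0 \) and \( u_{n}:=\hat{u}_{\rho\parens{\eta_{n}}} \) then proves the proposition.

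\emph{Main obstacle.} Everything hinges on the topological step. The real difficulty is to carry out the \( g \)-type openings \emph{simultaneously} in all bad cubes so that they are consistent across every shared face --- for the faces of all three orientations at once --- and to route the connecting tubes through the good cubes disjointly, so that the union of all the local singular sets is a genuine (Lipschitz) \( 1 \)-submanifold while \( u \) has been altered only on a small set. The existence of the model retraction \( g \) itself --- writing \( \mK^{1} \) as a retract of \( \overline{Q^{3}} \) minus a \( 1 \)-submanifold rather than minus the crossed dual skeleton --- is the conceptual heart, whereas the energy estimates (here the case \( p<2 \)) are routine once the homogeneous structure is in place.
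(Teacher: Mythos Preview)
Your two-step plan --- a topological uncrossing followed by a shrinking argument exploiting \( p<2 \) --- is the same scheme the paper uses, and your identification of the model retraction \( g \) as the conceptual heart is correct. But the way you organise the topological step is genuinely different from the paper's, and the difference is exactly where your self-diagnosed ``main obstacle'' sits.

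You work \emph{cube by cube}: in each bad cube replace the iterated homogeneous retraction by a \( g \)-type retraction, then try to match the pieces across shared faces by alternating the side of the off-cube projection point along \( e_{3} \)-columns, and finally route connecting tubes through the good cubes. This leaves real issues unresolved. On the face orthogonal to \( e_{3} \) shared by two bad cubes with opposite projection orientations, the two \( g \)-type maps are each nontrivial on that face, and you have not checked that they actually coincide there (nor that the five singular arcs arriving from below meet the five arriving from above). The interface between a bad cube and an adjacent good cube along the ``top'' face is worse: on that face the \( g \)-map is \emph{not} the standard radial face-retraction used by \( r \), so the boundary values do not match and the map is not even continuous there without further surgery. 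The routing of tubes through good cubes is asserted but not constructed. In short, the step you call the main obstacle is not carried out.

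The paper sidesteps all of this by changing the geometry of the modification region. Instead of working in each cube, it takes the \emph{vertical} part \( \mV^{1} \) of \( \mT^{1} \) --- each component is a full vertical line through \( Q^{3} \) --- truncates it slightly at the bottom, and digs a single thin rectangular well \( W_{\mu} \) around each such (truncated) vertical line. These wells are pairwise disjoint and contain \emph{all} crossings of \( \mT^{1} \). In each well one performs one radial projection from a single point just above its top; the resulting map \( \upPhi^{\rtop}_{\mu} \) equals the identity outside \( W_{\mu} \), so there is no face-compatibility problem at all and no tubes to route. The new singular set \( \parens{\upPhi^{\rtop}_{\mu}}^{-1}\parens{\mT^{1}} \) is then automatically a Lipschitz \( 1 \)-submanifold. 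A further payoff is that no Bethuel structure on \( u \) is needed: one simply sets \( v_{\mu}=u\circ\upPhi^{\rtop}_{\mu} \) for an arbitrary \( u\in\Rrig_{1} \), and the shrinking step (a direct \( \tau^{2-p} \) scaling computation in the \( 2 \)-dimensional cross-section of each well, choosing \( \tau=\tau_{\mu} \) small enough) controls the energy on the well solely in terms of \( \int_{V_{2\mu}}\abs{Du}^{p} \), which tends to \( 0 \) as \( \mu\to 0 \) by Lebesgue's lemma. Your detour through a preliminary Bethuel approximation and the pointwise bounds \( \abs{D\hat u_{\rho}}\lesssim \eta^{-1}\rho/\dist\parens{\cdot,\hat{\mS}_{\rho}} \) is then unnecessary.
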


To avoid technicalities and focus on the core of the argument, we have stated Proposition~\ref{prop:uncrossing_lines} with approximating maps being only locally Lipschitz outside of the singular set.
In the proof of the general case of our main result, in Section~\ref{sect:general_case}, we will take care of making the approximating maps smooth and establishing the estimates near the singular set in order to ensure that they belong to the class \( \Rsmooth \).

\begin{proof}
	Since \( u \in \Rrig_{1}\parens{Q^{3};\mN} \), we may assume that its singular set \( \mS_{u} \) is the dual skeleton \( \mT^{1} \) of the \( 1 \)\=/skeleton \( \mK^{1} \) of a cubication of \( \overline{Q^{3}} \) of inradius \( \eta \), for some \( \eta \in \frac{1}{2\bN_{\ast}} \).
	Let \( \mV^{1} \) be the vertical part of \( \mT^{1} \), that is, \( \mV^{1} \) consists of all the lines in \( \mT^{1} \) having directing vector \( (0,0,1) \).
	Let also \( \mV^{1}\trun = \mV^{1} \cap \parens[\big]{\parens{-1,1}^{2} \times \parens{-1+\eta,1}} \) be the vertical singular set \( \mV^{1} \) to which we have truncated the lower extremity. 
	For every \( 0 < \mu < \frac{1}{2} \), we define \( W_{\mu} = \parens{\mV^{1}\trun + Q_{\mu\eta}} \cap Q^{3} \).
	We note that the well \( W_{\mu} \) contains all the crossings of the singular set \( \mT^{1} \).
	The reader may refer to Figure~\ref{fig:well_Q3} for an illustration of the well \( W_{\mu} \) and the singular set \( \mS_{u} \).
	
	We uncross the singularities of \( u \) in two steps.
	The first one, of topological nature, consists in replacing \( u \) in \( W_{\mu} \) by another extension of \( u\restrfun{\partial W_{\mu}} \).
	This extension is constructed in a way that produces a singular set \emph{without} crossings, but comes with no control on the energy of the resulting map.
	The second step, of analytical nature, consists in modifying the map obtained in the first step to obtain a better map with a control on the energy.
	
	\begin{Step}
		\label{step:uncrossing_lines}
		Uncrossing the singularities.
	\end{Step}
	We construct a Lipschitz map \( \upPhi^{\rtop}_{\mu} \colon Q^{3} \to Q^{3} \) such that \( \upPhi^{\rtop}_{\mu} = \id \) outside of \( W_{\mu} \) and \( \parens{\upPhi^{\rtop}_{\mu}}^{-1}\parens{\mT^{1}} \) is a Lipschitz submanifold of \( Q^{3} \).
	Assuming that the map \( \upPhi^{\rtop}_{\mu} \) has been constructed, we explain how to conclude Step~\ref{step:uncrossing_lines}.
	We define the map \( v_{\mu} \colon Q^{3} \setminus \mS_{\mu} \to \mN \) by \( v_{\mu} = u \circ \upPhi^{\rtop}_{\mu} \).
	Here, \( \mS_{\mu} = \parens{\upPhi^{\rtop}_{\mu}}^{-1}\parens{\mT^{1}} \) is the singular set of \( v_{\mu} \), which is a Lipschitz submanifold of \( Q^{3} \) by assumption on \( \upPhi^{\rtop}_{\mu} \).
	Then, the map \( v_{\mu} \) is locally Lipschitz on \( Q^{3} \setminus \mS_{\mu} \), and it coincides with \( u \) outside of \( W_{\mu} \).
	
	We now explain how to construct the map \( \upPhi^{\rtop}_{\mu} \).
	The procedure is illustrated on Figure~\ref{fig:well_Q3}.
	For the part of \( W_{\mu} \) that lies around each line in \( \mV^{1} \), we proceed similarly to what we did on the model case described by Figure~\ref{fig:retraction_g}: We choose a projection point slightly above the line, and we use this point to retract radially the part of \( \overline{W_{\mu}} \) onto the corresponding part of \( \partial W_{\mu} \).
	We note that here, topological operations like closure or boundary are taken inside \( Q^{3} \).
	For instance, \( \partial W_{\mu} \) denotes the boundary of \( W_{\mu} \) in \( Q^{3} \) with respect to the subspace topology.
	This avoids having to systematically take the intersection with \( Q^{3} \) to remove the part of \( \partial W_{\mu} \) that would lie in the boundary of \( Q^{3} \) in \( \bR^{3} \).
	
	Carrying out this construction around each part of \( W_{\mu} \) produces a smooth retraction of \( \overline{W_{\mu}} \) onto \( \partial W_{\mu} \).
	Extending this map by identity outside of \( \overline{W_{\mu}} \) yields a Lipschitz map \( \upPhi^{\rtop}_{\mu} \colon Q^{3} \to Q^{3} \) such that \( \upPhi^{\rtop}_{\mu} = \id \) outside of \( W_{\mu} \).
	
	As \( \mT^{1} \) is a union of line segments which cross only in \( W_{\mu} \), we know that \( \mT^{1} \cap \parens{Q^{3} \setminus W_{\mu}} \) is a Lipschitz submanifold of \( Q^{3} \) with boundary, the latter being the finite set of points \( \mT^{1} \cap \partial W_{\mu} \).
	On the other hand, by construction of \( \upPhi^{\rtop}_{\mu} \), the set \( \parens{\parens{\upPhi^{\rtop}_{\mu}}\restrfun{\overline{W_{\mu}}}}^{-1}\parens{\mT^{1}} \) is a Lipschitz submanifold of \( Q^{3} \) --- actually a set of lines --- also with boundary given by the finite set of points \( \mT^{1} \cap \partial W_{\mu} \).
	Therefore, we conclude that \( \mS_{\mu} \) is a Lipschitz submanifold of \( Q^{3} \) (without boundary), which is depicted on the second cube in Figure~\ref{fig:well_Q3}.
	This finishes to prove that the map \( \upPhi^{\rtop}_{\mu} \) enjoys all the required properties.

	\begin{figure}[p]
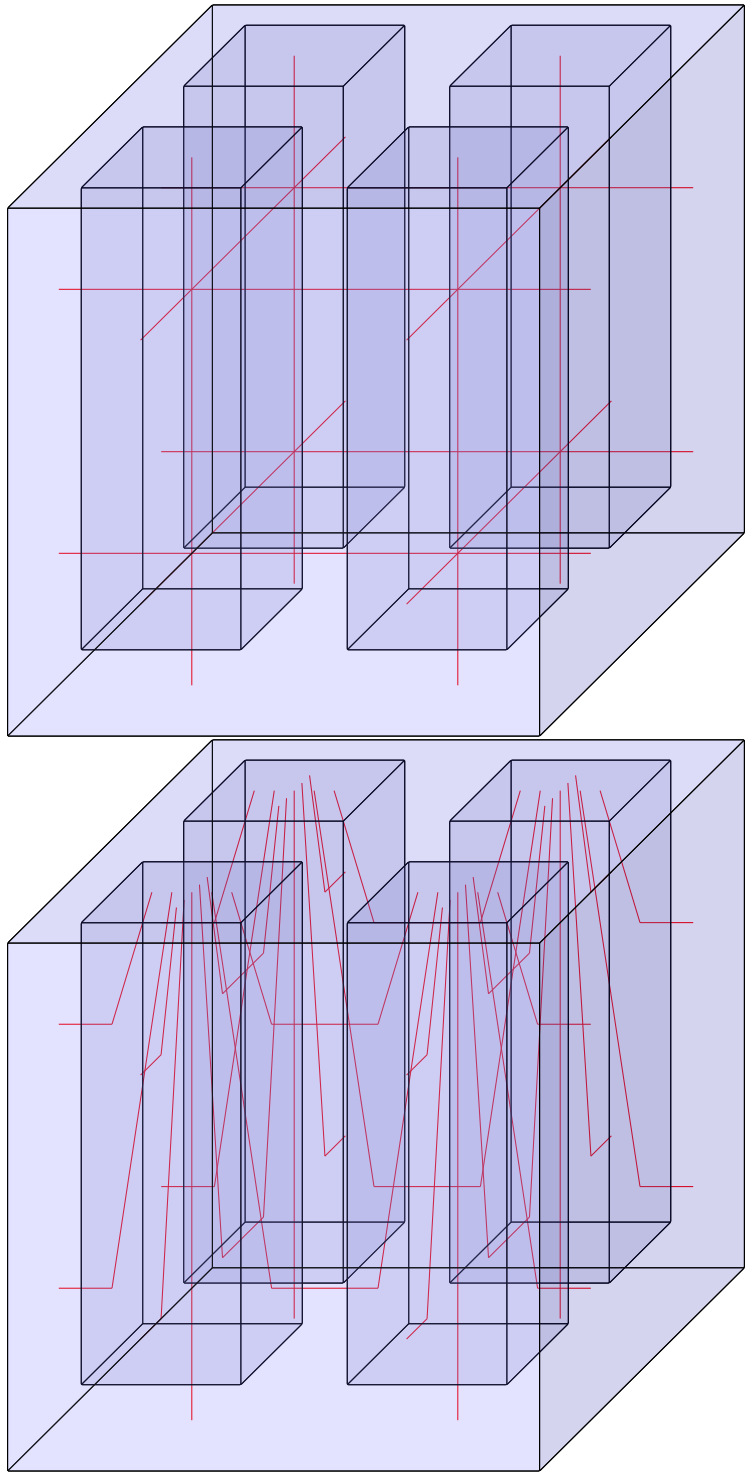

		\centering
		\includegraphics[page=4]{figures_improved_class_R.pdf}
		\vfill
		\includegraphics[page=5]{figures_improved_class_R.pdf}
		\caption{The well \( W_{\mu} \), with singularities before and after uncrossing}
		\label{fig:well_Q3}
	\end{figure}

	\begin{Step}
	\label{step:controlling_energy_lines}
		Controlling the energy.
	\end{Step}
	In the second step, we explain how to modify the map \( v_{\mu} \) in order to obtain a better map \( u_{\mu} \) with controlled energy.
	This relies on a scaling argument.
	For this, the key observation is that, as \( p < 2 \), contracting a Sobolev map to a smaller region decreases its energy in dimension \( 2 \).
	Let \( V_{\mu} = \parens{\mV^{1} + Q_{\mu\eta}} \cap Q^{3} \) be a neighborhood of inradius \( \mu\eta \) of the vertical part of the singular set of \( u \).
	We note that \( W_{\mu} \subset V_{\mu} \) (actually, \( W_{\mu} \) corresponds to \( V_{\mu} \) with its lower part truncated).
	The region \( V_{2\mu} = \parens{\mV^{1} + Q_{2\mu\eta}} \cap Q^{3} \) is a twice larger neighborhood of the vertical part of the singular set of \( u \).
	Given \( 0 < \tau < 1 \), we are going to shrink the values of \( v_{\mu} \) in \( V_{\mu} \) to the small region \( V_{\tau\mu} = \parens{\mV^{1} + Q_{\tau\mu\eta}} \cap Q^{3} \) while keeping \( v_{\mu} \) unchanged outside of \( V_{2\mu} \).
	As explained above, choosing \( \tau \) sufficiently small, we may make the energy of the shrunk map as small as we want on \( V_{\tau\mu} \), hence obtaining a new map with controlled energy regardless of the energy of the extension \( v_{\mu} \) constructed in first instance.
	The region \( V_{2\mu} \setminus V_{\tau\mu} \) serves as a transition region.
	The energy on this region remains under control, since we use the values of \( v_{\mu} \) outside of \( V_{\mu} \), where it coincides with the original map \( u \).
	
	We start with the model case of one vertical rectangle.
	Let \( R_{\mu} = \parens{-\mu\eta,\mu\eta}^{2} \times \parens{-1,1} \).
	Given \( v \in W^{1,p}\parens{Q^{3};\mN} \), we define \( v^{\sh}_{\tau} \in W^{1,p}\parens{Q^{3};\mN} \) by
	\[
		v^{\sh}_{\tau}\parens{x',x_{3}} = 
		\begin{cases}
			v\parens{x',x_{3}} & \text{if \( \parens{x',x_{3}} \in Q^{3} \setminus R_{2\mu} \);} \\
			v\parens{\frac{x'}{\tau},x_{3}} & \text{if \( \parens{x',x_{3}} \in R_{\tau\mu} \);} \\
			v\parens[\Big]{\frac{x'}{\abs{x'}}\parens[\big]{\frac{1}{2-\tau}\parens{\abs{x'}-\tau\mu\eta}+\mu\eta}, x_{3}} & \text{otherwise.}
		\end{cases}
	\]
	Relying on the additivity of the integral and the change of variable theorem, we estimate
	\[
		\int_{R_{2\mu}} \abs{Dv^{\sh}_{\tau}}^{p}
		\lesssim
		\int_{R_{2\mu} \setminus R_{\mu}} \abs{Dv}^{p}
		+
		\tau^{2-p}\int_{R_{\mu}} \abs{Dv}^{p}. 
	\]
	
	We now turn to the modification of our map \( v_{\mu} \).
	Applying the above construction to \( v_{\mu} \) on each rectangle constituting \( V_{2\mu} \), which is nothing else but a translate of \( R_{2\mu} \), we obtain a map \( u_{\mu,\tau} \in W^{1,p}\parens{Q^{3};\mN} \) such that
	\begin{enumerate}[label=(\roman*)]
		\item \( u_{\mu,\tau} \) is locally Lipschitz on \( Q^{3} \setminus \mS_{\mu,\tau} \), where \( \mS_{\mu,\tau} \) is a Lipschitz submanifold of \( Q^{3} \);
		\item \( u_{\mu,\tau} = v_{\mu} = u \) outside of \( V_{2\mu} \);
		\item 
		\[
			\int_{V_{2\mu}} \abs{Du_{\mu,\tau}}^{p}
			\lesssim
			\int_{V_{2\mu} \setminus V_{\mu}} \abs{Dv_{\mu}}^{p}
			+
			\tau^{2-p}\int_{V_{\mu}} \abs{Dv_{\mu}}^{p}. 
		\]
	\end{enumerate}
	Since \( p < 2 \), we may choose \( \tau = \tau_{\mu} \) sufficiently small, depending on \( \mu \), so that
	\begin{equation}
	\label{eq:choice_tau_lines}
		\tau^{2-p}\int_{V_{\mu}} \abs{Dv_{\mu}}^{p}
		\lesssim
		\int_{V_{2\mu} \setminus V_{\mu}} \abs{Dv_{\mu}}^{p}.
	\end{equation}
	We now let \( u_{\mu} = u_{\mu,\tau_{\mu}} \).
	Since \( u_{\mu} = u \) outside of \( V_{2\mu} \), we deduce that
	\[
		\int_{Q^{3}} \abs{Du-Du_{\mu}}^{p}
		\leq
		\int_{V_{2\mu}} \abs{Du-Du_{\mu}}^{p}
		\lesssim
		\int_{V_{2\mu}} \abs{Du}^{p} + \int_{V_{2\mu} \setminus V_{\mu}} \abs{Dv_{\mu}}^{p} + \tau^{2-p}\int_{V_{\mu}} \abs{Dv_{\mu}}^{p}.
	\]
	As \( v_{\mu} = u \) outside of \( V_{\mu} \), we infer from~\eqref{eq:choice_tau_lines} that 
	\[
		\int_{Q^{3}} \abs{Du-Du_{\mu}}^{p}
		\lesssim
		\int_{V_{2\mu}} \abs{Du}^{p}.
	\]
	But \( \abs{V_{\mu}} \to 0 \) as \( \mu \to 0 \), so that Lebesgue's lemma ensures that \( Du_{\mu} \to Du \) in \( L^{p}\parens{Q^{3}} \) as \( \mu \to 0 \).
	On the other hand, since \( \mN \) is compact, we readily have \( u_{\mu} \to u \) in \( L^{p}\parens{Q^{3}} \) as \( \mu \to 0 \).
	Hence, we conclude that \( u_{\mu} \to u \) in \( W^{1,p}\parens{Q^{3}} \) as \( \mu \to 0 \).
	Since \( u_{\mu} \) is locally Lipschitz outside of \( \mS_{\mu,\tau_{\mu}} \), which is a Lipschitz submanifold of \( Q^{3} \), this finishes the proof of the proposition.
	\resetstep
\end{proof}

In the proof of Proposition~\ref{prop:uncrossing_lines} above, the uncrossing map \( \upPhi^{\rtop}_{\mu} \) is only Lipschitz, but not smooth.
This is due to the use of a modified version of the radial retraction of a cube minus its origin to its boundary.
However, in order to obtain a construction compatible with the higher order regularity of Sobolev mappings in the full range \( 0 < s < +\infty \), one needs to work with smooth, and not only merely Lipschitz maps.
To achieve this, one should replace the retraction onto a boundary by a retraction onto a thick region.
To ease the understanding of the general case in Section~\ref{subsect:general_procedure}, we sketch the construction again in the special case of dimension \( m = 3 \) and line singularities.

We keep the same notation as in the proof of Proposition~\ref{prop:uncrossing_lines}, and in particular, we work again with the region \( W_{\mu} \).
We define the two additional, smaller regions \( W_{\mu,\mathrm{in}} \) and \( W_{\mu,\mathrm{out}} \) by \( W_{\mu,\mathrm{in}} = W_{\ulrho\mu} \) and \( W_{\mu,\mathrm{out}} = W_{\olrho\mu} \), where \( 0 < \ulrho < \olrho < 1 \) are fixed.
This way, we have \( W_{\mu,\mathrm{in}} \subset W_{\mu,\mathrm{out}} \subset W_{\mu} \), and all these regions still contain all the crossings of the singular set \( \mT^{1} \).

The starting point is the fact that, for any two open cubes \( Q_{\mathrm{in}} \subsetneq Q_{\mathrm{out}} \) centered at \( 0 \), denoting \( \bR^{3}_{-} = \bR^{2} \times \parens{-\infty,0} \), there exists a smooth diffeomorphism \( \upTheta \colon \bR^{3}_{-}  \to \bR^{3}_{-} \) such that \( \upTheta\parens{\bR^{3}_{-}} \subset \bR^{3}_{-} \setminus Q_{\mathrm{in}} \) and \( \upTheta = \id \) outside of \( Q_{\mathrm{out}} \).
Such a map is obtained by letting \( \upTheta\parens{x} = \lambda\parens{x}x \), where \( \lambda \colon \bR^{3}_{-} \to  \lbrack 1,+\infty\rparen \) is suitably chosen, and satisfies in particular \( \lambda = 1 \) outside of \( Q_{\mathrm{out}} \).

Inserting appropriately scaled copies of \( \upTheta \) around each part of \( W_{\mu} \) and extending by identity outside produces a smooth map \( \upPhi^{\rtop}_{\mu} \colon Q^{3} \to Q^{3} \) such that \( \upPhi^{\rtop}_{\mu} = \id \) outside of \( W_{\mu,\mathrm{out}} \) and \( \upPhi^{\rtop}_{\mu}\parens{Q^{3}} \subset Q^{3} \setminus W_{\mu,\mathrm{in}} \).
Thanks to our refined construction, replacing the retraction onto a boundary by a retraction onto the thick region \( W_{\mu,\mathrm{out}} \setminus W_{\mu,\mathrm{in}} \), we managed to produce a map which is not only Lipschitz but even smooth, while retaining the key feature that its range has to avoid the crossings in the singular set \( \mT^{1} \).
Therefore, \( \parens{\upPhi^{\rtop}_{\mu}}^{-1}\parens{\mT^{1}} \) is a smooth submanifold of \( Q^{3} \), as we needed.
We shall build upon this idea, in combination with the constructions sketched below for handling higher dimensional singularities, to prove Theorem~\ref{theorem:main} in full generality in the whole range \( 0 < s  < +\infty \).

\bigskip

We now turn to the case of the density of the class \( \Rsmooth_{2}\parens{Q^{3};\mN} \), where the maps have plane singularities.
Compared to the case of line singularities treated previously, the first topological step consisting in uncrossing the singularities features an additional difficulty, that we explain in this subsection in an informal way, with the help of figures.
The precise construction of the topological step, as well as the analytical step in which we improve the construction with a control on the energy and which relies on the same scaling argument as for line singularities, are postponed to Section~\ref{subsect:general_procedure}, where we explain precisely the general tools needed to prove Theorem~\ref{theorem:main}.

Consider a singular set \( \mT^{2} \) for a map \( u \) in \( \Rrig_{2}(Q^{3};\mN) \), given by the dual skeleton of the \( 0 \)\=/skeleton \( \mK^{0} \) of a cubication of \( \overline{Q^{3}} \) having inradius \( \eta \in \frac{1}{2\bN_{\ast}} \).
As previously, we let \( \mV^{2} \) denote the vertical part of \( \mT^{2} \), that is, the union of all hyperplanes which constitute \( \mT^{2} \) whose associated vector space contains \( e_{3} \).
This set is made of two unions of parallel planes: the set \( \mT_{1,3} \) consisting of all the planes in \( \mT^{2} \) whose associated vector space is spanned by \( e_{1} \) and \( e_{3} \), and the set \( \mT_{2,3} \) consisting of all the planes in \( \mT^{2} \) whose associated vector space is spanned by \( e_{2} \) and \( e_{3} \).
We also let \( \mV^{2}\trun = \mV^{2} \cap \parens[\big]{\parens{-1,1}^{2} \times \parens{-1+\eta,1}} \) be the truncated version of \( \mV^{2} \).

As previously, given \( 0 < \mu < \frac{1}{2} \), we consider \( W_{\mu} = \parens{\mV^{2}\trun + Q_{\mu\eta}} \cap Q^{3} \) a well around \( \mV^{2}\trun \).
We first uncross the singularities in \( W_{\mu} \) as follows.
We start with a model construction to uncross two families of parallel planes.
We observe that the construction carried out for lines in \( Q^{3} \) in the proof of Proposition~\ref{prop:uncrossing_lines} may also be applied to lines in \( Q^{2} \).
Indeed, it suffices to perform a radial projection around a point outside \( Q^{2} \) in order to retract \( \overline{Q^{2}} \) onto all its edges except one.
This construction can then be applied to uncross two planes (or, more precisely, portions of planes).
Assume that one wants to uncross the singularities around the vertical portion of plane \( \mP = \set{x \in Q^{3}\st x_{1} = 0} \).
Consider the line segment \( \mL = \set{0} \times \parens{-1,1} \times \set{1+\varepsilon} \), which is a line segment subparallel to \( \mP \) and lying slightly above \( \mP \).
For every plane orthogonal to \( \mL \) determined by \( x_{2} = t \) with \( -1 < t < 1 \), one performs the \( 2 \)\=/dimensional uncrossing procedure in this plane with respect to the unique point of \( \mL \) lying in the plane.
Otherwise stated, one proceeds to a radial projection around a line segment in the \( e_{2} \) direction lying slightly above \( \mP \), viewing the second coordinate variable as a dummy variable.
This allows to uncross \( \mP \) from other planes in horizontal position.
The procedure is illustrated in Figure~\ref{fig:uncrossing_plane_one_direction}: the vertical plane around which the well has been dug is uncrossed from the horizontal plane, and both vertical planes are left unchanged.

We may then elaborate on this idea to uncross all singularities in \( W_{\mu} \), as described in Figure~\ref{fig:uncrossing_plane_vertical}.
On the parts of \( W_{\mu} \) that do not contain a crossing between two vertical planes (the four darkest parts around the central one in Figure~\ref{fig:uncrossing_plane_vertical}), we insert a copy of the construction described just above, as in Figure~\ref{fig:uncrossing_plane_one_direction}.
We note that the constructions are compatible on the region where two different parts touch --- which is a union of vertical line segments --- since they coincide with the identity there.
On the parts of \( W_{\mu} \) around the crossing between orthogonal vertical planes (the central part in Figure~\ref{fig:uncrossing_plane_vertical}), we finish the construction of our extension by using the radial projection from a point slightly above the crossing, as we did for line singularities.
The resulting effect of these glued constructions is to remove all the crossings between horizontal and vertical planes; see Figure~\ref{fig:uncrossing_plane_vertical}.
However, unlike in the case of line singularities, we are not done yet, since there still are crossings between orthogonal vertical planes to remove.

\begin{figure}[ht]
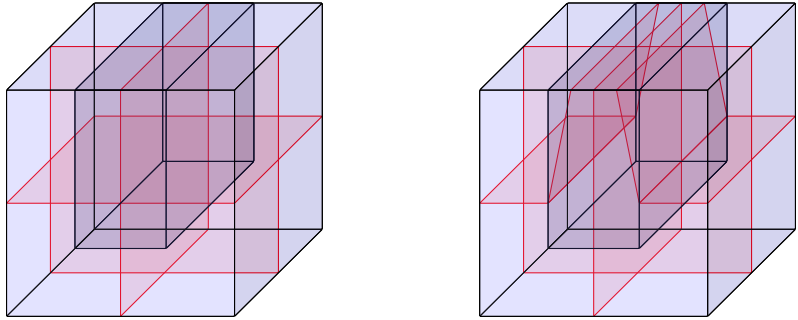

	\centering
	~
	\hfill
	\includegraphics[page=6]{figures_improved_class_R.pdf}
	\hfill
	\includegraphics[page=7]{figures_improved_class_R.pdf}
	\hfill
	~
	\caption{Uncrossing plane singularities in one direction}
	\label{fig:uncrossing_plane_one_direction}
\end{figure}

\begin{figure}[ht]
	\centering
	\includegraphics[page=8]{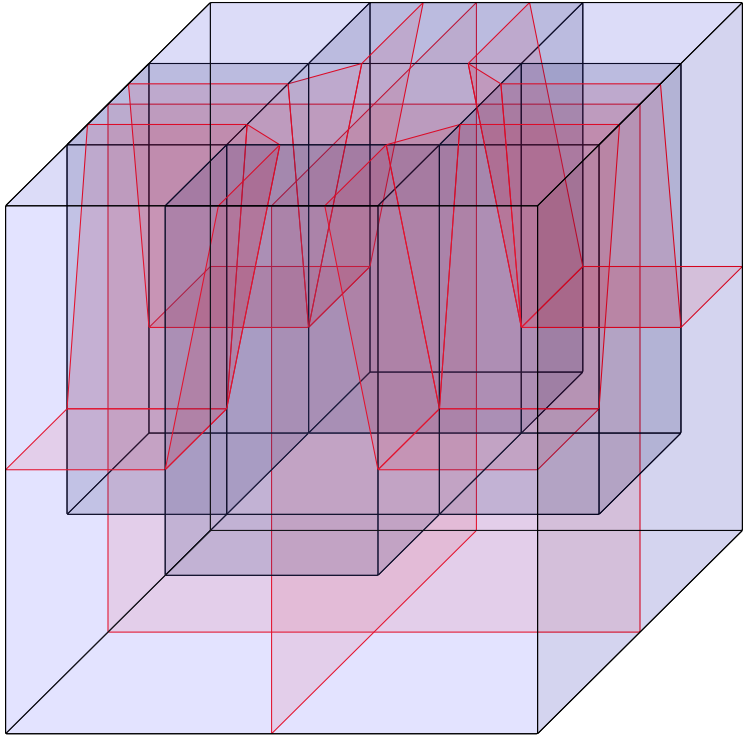}
	\caption{Uncrossing plane singularities around all vertical planes}
	\label{fig:uncrossing_plane_vertical}
\end{figure}

For this purpose, we use a well in another direction.
We consider the truncated set of parallel hyperplanes \( \mH\trun^{2} = \mT^{1,3} \cap \parens[\big]{\parens{-1+\eta,1} \times \parens{-1,1}^{2}} \), and the well \( H_{\mu} = \parens{\mH\trun^{2} + Q_{\rho\mu\eta}} \cap Q^{3} \), where \( 0 < \rho < 1 \) is chosen sufficiently small so that \( H_{\mu} \) intersects only the planes in the singular set that have not yet been uncrossed.
We note that \( H_{\mu} \) contains all the remaining singularities.
We then insert a rotated copy of the construction illustrated in Figure~\ref{fig:uncrossing_plane_one_direction} in each part of the well \( H_{\mu} \) around a plane constituting \( \mH\trun^{2} \).
The procedure is illustrated in Figure~\ref{fig:uncrossing_plane_horizontal} in the case where there is only one plane in each direction.
At the end of this step, the crossings between orthogonal vertical planes have been removed, and therefore no crossings remain.

\begin{figure}[ht]
	\centering
	\includegraphics[page=9]{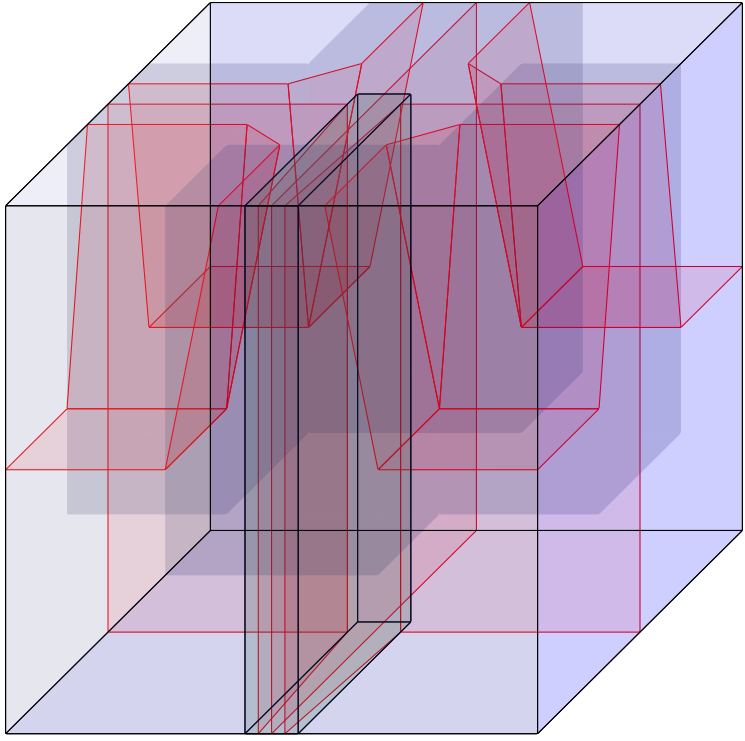}
	\caption{Uncrossing plane singularities between vertical planes}
	\label{fig:uncrossing_plane_horizontal}
\end{figure}

This concludes our informal presentation of some particular cases of crossings removal.
In the next section, we introduce the general version of the two main tools that have been presented here: the topological construction to remove crossings, and the analytical procedure to control the energy on the modified region.
These two tools are the key ingredients in the proof of our main result.
Concerning the second one, we use the shrinking construction introduced by Bousquet, Ponce, and Van Schaftingen~\cite[Section~8]{BousquetPonceVanSchaftingen2015}; see also~\cite[Section~7]{Detaille2023} for the fractional order setting.
For the first one, however, we need to perform an \emph{ad hoc} construction, suited for our purposes.
This construction is nevertheless very similar to the thickening procedure introduced in~\cite[Section~4]{BousquetPonceVanSchaftingen2015}.
As we have seen in our last example with plane singularities, the crossings removal procedure may involve gluing building blocks in various dimensions and also combining crossings removal procedures in different directions to get rid of all the existing crossings.

\subsection{The general crossings removal procedure}
\label{subsect:general_procedure}

We now explain how to prove our main result, Theorem~\ref{theorem:main}, in the general case.
The argument follows the same two steps as in Proposition~\ref{prop:uncrossing_lines}: First, we uncross the singularities through a topological procedure, and then we rely on an analytical argument to obtain a control on the energy.

We start by considering the first topological step.
This is handled by the following proposition.

\begin{proposition}
\label{prop:main_topological_tool}
	Let \( \ell \in \{0,\dots,m-2\} \) and let \( \mT^{\ell^{\ast}} \) be the dual skeleton of the \( \ell \)\=/skeleton \( \mK^{\ell} \) of a cubication \( \mK^{m} \) of \( \overline{Q^{m}} \) of inradius \( \eta > 0 \).
	For every \( 0 < \mu < 1 \), there exists a smooth local diffeomorphism \( \upPhi \colon Q^{m} \to Q^{m} \) such that
	\begin{enumerate}[label=(\roman*)]
		\item\label{item:main_top_tool_sing_set} \( \mS^{\ell^{\ast}} = \upPhi^{-1}\parens{\mT^{\ell^{\ast}}} \) is a smooth \( \ell^{\ast} \)\=/dimensional submanifold of \( Q^{m} \);
		\item\label{item:main_top_tool_id} \( \upPhi = \id \) outside of \( \mT^{\ell^{\ast}} + Q_{\mu\eta} \).
	\end{enumerate}
	Moreover, \( \upPhi \) can be extended to a smooth local diffeomorphism on a slightly larger open set \( \omega \subset \bR^{m} \) such that \( Q^{m} \Subset \omega \).
\end{proposition}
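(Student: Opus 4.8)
The plan is to construct $\upPhi$ as a finite composition of \emph{elementary uncrossing moves}, each one a smooth local diffeomorphism of $Q^{m}$ supported in a thin tubular well around one family of mutually parallel hyperplanes constituting a top-dimensional stratum of $\mT^{\ell^{\ast}}$. The construction is local in nature: after a harmless rescaling I may take $\eta$ as small as I wish, and since $\mT^{\ell^{\ast}}$ is a union of coordinate affine subspaces, near each of its points the picture is a scaled translate of the model picture in the unit cube, so the same elementary move applies everywhere. I shall use in an essential way that $\upPhi$ is required to be only a \emph{local} diffeomorphism: the wells reach the faces of $Q^{m}$, so the set where $\upPhi \neq \id$ is not compactly contained in $Q^{m}$, and this is exactly what leaves room for $\upPhi$ to be non-injective near $\partial Q^{m}$ and hence to turn the self-intersections of $\mT^{\ell^{\ast}}$ into an embedded submanifold.

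The heart of the matter is a single elementary move, which is the local-diffeomorphism incarnation of the model retraction $g$ of Section~\ref{subsect:part_cases}. Fix a family $\mathcal{F}$ of mutually parallel $\ell^{\ast}$-dimensional strata of $\mT^{\ell^{\ast}}$, with common coordinate direction $V$; truncate each of them near $\partial Q^{m}$ in one of the directions spanning $V$ to obtain $\mathcal{F}\trun$; pick $\rho > 0$ small; and set $W = (\mathcal{F}\trun + Q_{\rho\mu\eta}) \cap Q^{m}$, with $\rho$ small enough that $W$ meets only those strata of $\mT^{\ell^{\ast}}$ that have not yet been processed. On $W$ I build a smooth local diffeomorphism $\psi \colon Q^{m} \to Q^{m}$, equal to the identity outside $W$, whose effect on $\mathcal{F}$ is the one illustrated by the retraction $g$: inside $W$ the strata of $\mathcal{F}$ are pushed towards $\partial W$ and are thereby separated from the strata of $\mT^{\ell^{\ast}}$ that crossed them there, while the not-yet-processed strata are left unchanged. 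This map is produced, slice by slice along $\mathcal{F}\trun$ and then suitably regularised, by a variant of the thickening procedure of~\cite[Section~4]{BousquetPonceVanSchaftingen2015}, which also yields the derivative bounds near the new singular set. These slice-wise moves glue along the truncated directions and on the overlaps between different parts of $W$ because each of them is the identity near the relevant boundary pieces. Ordering the families so that the ``vertical-most'' ones are treated first --- as in the two cases worked out in Section~\ref{subsect:part_cases} --- each move removes a layer of crossings without recreating any that were already eliminated, so after finitely many of them the total preimage $\mS^{\ell^{\ast}} = \upPhi^{-1}(\mT^{\ell^{\ast}})$ is a smooth $\ell^{\ast}$-dimensional submanifold of $Q^{m}$, which is~\ref{item:main_top_tool_sing_set}; property~\ref{item:main_top_tool_id} is immediate, since each well is contained in $\mT^{\ell^{\ast}} + Q_{\mu\eta}$. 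For the last assertion, the whole construction is governed by the cubication of all of $\bR^{m}$, so it may be carried out verbatim on a slightly larger concentric cube and then restricted to $\omega$; taking the enlargement small enough that it contains no new stratum of $\mT^{\ell^{\ast}}$, property~\ref{item:main_top_tool_id} is preserved.

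I expect the main obstacle to be the elementary move together with the bookkeeping of its iteration: one must choose the truncations and the successive well-widths $\rho\mu\eta$ consistently so that each well still contains all the crossings it is meant to remove, stays away from the strata already made disjoint, and glues compatibly on overlaps with the earlier moves --- precisely the combinatorics that is only sketched informally around the figures of Section~\ref{subsect:part_cases} for $m = 3$ --- and so that the slice-wise ``push to one side'' is realised as a genuine smooth local diffeomorphism with controlled derivatives near $\mS^{\ell^{\ast}}$ while remaining the identity outside the $\mu\eta$-neighbourhood. Once the geometry is set up, the required smoothing and derivative estimates are routine, following~\cite[Section~4]{BousquetPonceVanSchaftingen2015}.
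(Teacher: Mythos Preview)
Your overall plan --- compose finitely many elementary local-diffeomorphism moves supported in wells, each built from the thickening blocks of~\cite[Section~4]{BousquetPonceVanSchaftingen2015}, shrinking the well widths as you iterate --- is exactly the paper's. The paper, however, organises the iteration differently: at each outer step it processes simultaneously \emph{all} \( \ell^{\ast} \)\=/planes of \( \mT^{\ell^{\ast}} \) whose direction contains a fixed coordinate vector \( e_{j} \) (not just one parallel family), and within that step it composes building blocks \( \upPhi_{0},\dotsc,\upPhi_{\ell^{\ast}-1} \) indexed by the face-dimensions of the top-face skeleton \( \mE^{\ell^{\ast}-1}=\mV\cap\{x_{m}=1\} \), with nested radii \( \ulrho_{d}<\olrho_{d} \), to handle the self-intersections of the resulting well.

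Your stated condition that each new well ``meets only those strata that have not yet been processed'' is in general not achievable when you work one parallel family at a time. Whenever two families \( \mathcal{F} \) and \( \mathcal{F}' \) share a spanning direction and you truncate along it, the elementary radial push preserves both families as sets --- \( \mathcal{F} \) because the push is radial from a point on its axis, \( \mathcal{F}' \) because it lies in the dummy slices --- so neither move removes their mutual crossing, and any later well around \( \mathcal{F}' \) unavoidably still meets \( \mathcal{F} \). For \( \ell^{\ast}=2 \) in \( \bR^{3} \) this is precisely the crossing between the two families of vertical planes, which in Section~\ref{subsect:part_cases} required the extra central block around their intersection line. The paper's direction-at-a-time grouping is what circumvents this, at the price of the nested stratified construction that you correctly flag as the main obstacle but do not spell out.
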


The proof of Proposition~\ref{prop:main_topological_tool} is similar in its spirit to the \emph{thickening} construction; see~\cite[Section~4]{BousquetPonceVanSchaftingen2015}.
However, to have a tool suited for our purposes here, we cannot re-use thickening as such, and we need to proceed to a quite different construction.
We also note that our restriction \( \ell \leq m-2 \) excludes the case \( \ell = m-1 \), where \( \ell^{\ast} = 0 \), and hence the singular set would have been made of points.
But in this case, the classes \( \Rrig \), \( \Rclas \), and \( \Rsmooth \) all coincide, so that Theorem~\ref{theorem:main} is already contained in Bethuel's theorem and its counterpart for arbitrary \( s \), and requires therefore no additional argument.

\begin{proof}
	As explained in the last example of Section~\ref{subsect:part_cases}, the general uncrossing procedure requires to perform successive uncrossing steps in various directions. 
	\begin{Step}
		Uncrossing singularities in a vertical well.
	\end{Step}
	We let \( \mV \) be the part of \( \mT^{\ell^{\ast}} \) consisting only of \( \ell^{\ast} \)\=/planes that have the vertical vector \( e_{m} \) in their associated vector space.
	We also consider the truncated set of planes \( \mV_{\trun} = \mV \cap \parens{\parens{-1,1}^{m-1} \times \parens{-1+\eta,1}} \). 
	Finally, we let \( W_{\mu} = \parens{\mV_{\trun} + Q_{\mu\eta}} \cap Q^{m} \) be a well around \( \mV_{\trun} \).
	The well \( W_{\mu/2} \) is defined accordingly.
	We note that \( W_{\mu/2} \) contains all the crossings that involve at least one non vertical \( \ell^{\ast} \)\=/plane, i.e., a plane not in \( \mV \). 
	
	Let 
	\[
		\frac{\mu}{2} < \ulrho_{\ell^{\ast}-1} < \olrho_{\ell^{\ast}-1} < \dotsb < \ulrho_{0} < \olrho_{0} < \mu.
	\]
	We consider \( \mE^{\ell^{\ast}-1} = \mV \cap \parens{\parens{-1,1}^{m-1} \times \set{1}} \) the intersection of \( \mV \) with the top face of \( Q^{m} \). We note that \( \mE^{\ell^{\ast}-1} \) is an \( \parens{\ell^{\ast}-1} \)\=/skeleton.
	For every \( d \in \set{0,\dotsc,\ell^{\ast}-1} \), we define the rectangles
	\begin{gather*}
		R^{d} = \parens{-\mu\eta,\mu\eta}^{m-1-d} \times \parens{-\parens{1-\olrho_{d}}\eta,\parens{1-\olrho_{d}}\eta}^{d} \times \parens{-1+\eta-\mu\eta,1}\text{,} \\
		R^{d}_{\mathrm{out}} = \parens{-\olrho_{d}\eta,\olrho_{d}\eta}^{m-1-d} \times \parens{-\parens{1-\olrho_{d}}\eta,\parens{1-\olrho_{d}}\eta}^{d} \times \parens{-1+\eta-\olrho_{d}\eta,1}\text{,}
		\intertext{and}
		R^{d}_{\mathrm{in}} = \parens{-\ulrho_{d}\eta,\ulrho_{d}\eta}^{m-1-d} \times \parens{-\parens{1-\olrho_{d}}\eta,\parens{1-\olrho_{d}}\eta}^{d} \times \parens{-1+\eta-\ulrho_{d}\eta,1}.
	\end{gather*}
	Given a \( d \)\=/face \( \sigma^{d} \in E^{d} \), we let \( R_{\sigma^{d}} \) be the rotated copy of \( R^{d} \) positioned so that \( \sigma^{d} \) corresponds to \( \set{0}^{m-d-1} \times \parens{-1,1}^{d} \times \set{1} \).
	This way, we note that \( R_{\sigma^{d}} \subset W_{\mu} \) for every \( d \in \set{0,\dotsc,\ell^{\ast}-1} \) and every \( \sigma^{d} \in E^{d} \), and that actually \( W_{\mu} \) is made of the union of all such \( R_{\sigma^{d}} \).
	We define similarly \( R_{\sigma^{d},\mathrm{in}} \) and \( R_{\sigma^{d},\mathrm{out}} \).
	
	We use as a tool the following construction from~\cite[Proposition~4.3]{BousquetPonceVanSchaftingen2015}:
	There exists a smooth local diffeomorphism \( \upTheta_{d} \colon Q^{d}_{\mu\eta} \setminus \set{0} \to Q^{d}_{\mu\eta} \) such that 
	\begin{enumerate}[label=(\roman*)]
		\item\label{item:Thetad_geometry} \( \upTheta_{d}\parens{Q^{d}_{\mu\eta} \setminus \set{0}} \subset Q^{d}_{\mu\eta} \setminus Q^{d}_{\ulrho_{d}\eta} \);
		\item\label{item:Thetad_id} \( \upTheta_{d} = \id \) outside of \( Q^{d}_{\olrho_{d}\eta} \).
	\end{enumerate}
	The map \( \upTheta_{d} \) is constructed by letting \( \upTheta_{d}\parens{x} = \lambda\parens{x}x \) for some well-chosen smooth map \( \lambda \colon Q^{d}_{\mu\eta} \setminus \set{0} \to \lbrack 1,+\infty\rparen \) such that \( \lambda = 1 \) outside of \( Q^{d}_{\olrho_{d}\eta} \).
	We focus our attention to the restriction of \( \upTheta_{d} \) to the lower part of \( Q^{d}_{\mu\eta} \), slightly below \( \set{0} \).
	After a suitable distortion of \( Q^{d}_{\mu\eta} \) and addition of dummy variables, this yields a smooth local diffeomorphism \( \upPsi_{d} \colon R^{d} \to R^{d} \) such that 
	\begin{enumerate}[label=(\roman*)]
		\item\label{item:Psid_geometry} \( \upPsi_{d}\parens{R^{d}} \subset R^{d} \setminus R^{d}_{\mathrm{in}} \);
		\item\label{item:Psid_id} \( \upPsi_{d} = \id \) outside of \( R^{d}_{\mathrm{out}} \).
	\end{enumerate}
	Let \( \upPsi_{\sigma^{d}} \) be the map obtained by transporting isometrically \( \upPsi_{d} \) to \( R_{\sigma^{d}} \), and define \( \upPhi_{d}\parens{x} = \upPsi_{\sigma^{d}}\parens{x} \) if \( x \in R_{\sigma^{d}} \).
	We note that this is well defined.
	Indeed, if \( x \in R_{\sigma_{1}^{d}} \cap R_{\sigma_{2}^{d}} \), then \( x \) is outside of \( R_{\sigma_{1}^{d},\mathrm{out}} \) and \( R_{\sigma_{2}^{d},\mathrm{out}} \), which implies that \( \upPsi_{\sigma_{1}^{d}}\parens{x} = x = \upPsi_{\sigma_{2}^{d}}\parens{x} \).
	
	We readily observe that the map \( \upPhi_{d} \) can be smoothly extended by identity to \( Q^{m} \setminus \bigcup\limits_{\substack{\sigma^{l} \in E^{l} \\ l \in \set{0,\dotsc,d-1}}} R_{\sigma^{l},\mathrm{in}} \).
	In particular, this yields \( \upPhi_{d} = \id \) outside of \( W_{\mu} \).
	Moreover, \( \upPhi_{d} \) has the property that it maps \( \bigcup\limits_{\sigma^{d} \in E^{d}} R_{\sigma^{d}} \) outside of \( \bigcup\limits_{\sigma^{d} \in E^{d}} R_{\sigma^{d},\mathrm{in}} \).
	By induction, this implies that the composition \( \upPhi_{\ver} = \upPhi_{\ell^{\ast}-1} \circ \dotsb \circ \upPhi_{0} \) is a well-defined smooth local diffeomorphism and maps \( Q^{m} \) outside of \( W_{\mu/2} \).
	We note importantly that the well-defined character of the map relies on the fact that, although \( \upPhi_{d} \) is not defined on the whole \( Q^{m} \), it is nevertheless defined on the range of the composition of the previous maps in the induction process.
	In particular, \( \upPhi_{\ver}^{-1}\parens{\mT^{\ell^{\ast}}} \) is a finite union of smooth \( \ell^{\ast} \)\=/dimensional submanifolds of \( Q^{m} \), and as \( W_{\mu/2} \) contains all the crossings between \( \ell^{\ast} \)\=/planes in \( \mT^{\ell^{\ast}} \) involving at least one non vertical one, we deduce that the only submanifolds in \( \upPhi_{\ver}^{-1}\parens{\mT^{\ell^{\ast}}} \) that intersect correspond to inverse images of vertical \( \ell^{\ast} \)\=/planes.
	Finally, since the building blocks \( \upTheta_{d} \) have the form \( \upTheta_{d}\parens{x} = \lambda\parens{x}x \), we also find that \( \upPhi_{\ver}^{-1}\parens{\mV} = \mV \).
	
	\begin{Step}
		Uncrossing vertical planes.
	\end{Step}
	It remains to remove the crossings between planes in \( \mV \).
	For this purpose, we choose another --- non vertical --- direction, and we rotate \( Q^{m} \) to make it correspond to the vertical one.
	We then repeat the exact same construction as in the first step, except that we replace \( W_{\mu} \) by \( W_{\rho\mu} \) for some \( 0 < \rho < 1 \) so small that \( W_{\rho\mu} \) does not intersect the inverse images under \( \upPhi_{\ver} \) of \( \ell^{\ast} \)\=/planes of \( \mT^{\ell^{\ast}} \setminus \mV \).
	The construction should then be modified accordingly, adding the scaling \( \rho \) wherever necessary, and this yields another smooth local diffeomorphism \( \upPhi_{\hor} \colon Q^{m} \to Q^{m} \) that coincides with the identity outside of \( W_{\rho\mu} \) and such that \( \upPhi_{\hor}^{-1}\parens{\mV} \) is a finite union of smooth \( \ell^{\ast} \)\=/dimensional submanifolds of \( Q^{m} \).
	Moreover, only the inverse images coming from planes in the new vertical direction may still cross.
	Therefore, the map \( \upPhi_{\ver} \circ \upPhi_{\hor} \) is a smooth local diffeomorphism that coincides with the identity outside of \( \mT^{\ell^{\ast}} + Q_{\mu\eta} \) and such that \( \parens{\upPhi_{\ver} \circ \upPhi_{\hor}}^{-1}\parens{\mT^{\ell^{\ast}}} \) is a finite union of smooth submanifolds of \( Q^{m} \), and only the parts coming from \( \ell^{\ast} \)\=/planes aligned with the two chosen directions may still cross.
	
	We pursue this procedure, choosing each time a new direction to be the vertical one, until no crossing remains.
	This yields the desired map \( \upPhi \).
	
	Moreover, it is readily observed from our construction, since each building block could have been defined on a slightly larger set, that \( \upPhi \) may be extended to a smooth local diffeomorphism defined on a slightly larger set.
	\resetstep
\end{proof}

We now turn to the analytical step.
This relies on the \emph{shrinking} construction, which has been introduced in~\cite[Section~8]{BousquetPonceVanSchaftingen2015}; see also~\cite[Section~7]{Detaille2023} for the fractional order setting.

\begin{proposition}
\label{prop:main_analytical_tool}
	Let \( \ell \in \{0,\dots,m-1\} \), \( 0 < \mu < \frac{1}{2} \), \( 0 < \tau < \frac{1}{2} \), \( \varepsilon > 0 \),	\( \mK^{m} \) be a cubication in \( \bR^{m} \) of radius \( \eta > 0 \), and \( \mT^{\ell^{\ast}} \) be the dual skeleton of \( \mK^{\ell} \).
	If \( \ell + 1 > sp \), then there exists a smooth local diffeomorphism \( \upPhi \colon \bR^{m} \to \bR^{m} \) satisfying \( \upPhi\parens{\sigma^{m}} \subset \sigma^{m} \) for every \( \sigma^{m} \in K^{m} \) and such that, for every \( u \in W^{s,p}\parens{\mK^{m}} \) and every \( v \in W^{s,p}\parens{\mK^{m}} \) such that \( u = v \) on the complement of \( \mT^{\ell^{\ast}}+Q^{m}_{\mu\eta} \), we have \( u \circ \upPhi \in W^{s,p}\parens{\mK^{m}} \), and moreover, there exists a constant \( C > 0 \) depending on \( m \), \( s \), and \( p \) such that
	\begin{enumerate}[label=(\roman*)]
		\item\label{item:below_main_shrinking_sle1} if \( 0 < s < 1 \), then 
		\[
		\lvert u\circ\upPhi - v \rvert_{W^{s,p}\parens{\mK^{m}}} 
		\leq
		C\parens[\Big]{\lvert v \rvert_{W^{s,p}\parens{\mK^{m} \cap \parens{\mT^{\ell^{\ast}}+Q^{m}_{2\mu\eta}}}} + \parens{\mu\eta}^{-s}\lVert v \rVert_{L^{p}\parens{\mK^{m} \cap \parens{\mT^{\ell^{\ast}}+Q^{m}_{2\mu\eta}}}}} + \varepsilon;
		\]
		\item\label{item:below_main_shrinking_sgeq1_integer} if \( s \geq 1 \), then for every \( j \in \{1,\dots,k\} \),
		\[
		\lVert D^{j}\parens{u\circ\upPhi} - D^{j}v \rVert_{L^{p}\parens{\mK^{m}}}
		\leq 
		C\sum_{i=1}^{j} \parens{\mu\eta}^{i-j}\lVert D^{i}v \rVert_{L^{p}\parens{\mK^{m} \cap \parens{\mT^{\ell^{\ast}}+Q^{m}_{2\mu\eta}}}} + \varepsilon;
		\]
		\item\label{item:below_main_shrinking_sgeq1_frac} if \( s \geq 1 \) and \( \sigma \neq 0 \), then for every \( j \in \{1,\dots,k\} \),
		\begin{multline*}
		\lvert D^{j}\parens{u\circ\upPhi} - D^{j}v \rvert_{W^{\sigma,p}\parens{\mK^{m}}} \\
		\leq
		C\sum_{i=1}^{j}\parens[\Big]{\parens{\mu\eta}^{i-j-\sigma}\lVert D^{i}v \rVert_{L^{p}\parens{\mK^{m} \cap \parens{\mT^{\ell^{\ast}}+Q^{m}_{2\mu\eta}}}} + \parens{\mu\eta}^{i-j}\lvert D^{i}v \rvert_{W^{\sigma,p}\parens{\mK^{m} \cap \parens{\mT^{\ell^{\ast}}+Q^{m}_{2\mu\eta}}}}} + \varepsilon;
		\end{multline*}
		\item\label{item:below_main_shrinking_all} for every \( 0 < s < +\infty \),
		\[
		\lVert u \circ \upPhi - v \rVert_{L^{p}\parens{\mK^{m}}}
		\leq 
		C\lVert v \rVert_{L^{p}\parens{\mK^{m} \cap \parens{\mT^{\ell^{\ast}}+Q^{m}_{2\mu\eta}}}} + \varepsilon.
		\]
	\end{enumerate}
\end{proposition}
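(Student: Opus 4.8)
The plan is to take for \( \upPhi \) a cube-by-cube localization of the \emph{shrinking} diffeomorphism of Bousquet, Ponce, and Van Schaftingen~\cite[Section~8]{BousquetPonceVanSchaftingen2015} (see also~\cite[Section~7]{Detaille2023} for the fractional range). Inside a top cube \( \sigma^{m}\in K^{m} \), the dual skeleton \( \mT^{\ell^{\ast}} \) consists of the points with at least \( \ell+1 \) vanishing coordinates relative to the center of \( \sigma^{m} \), hence has codimension \( \ell+1 \). Transversally to a face of \( \mT^{\ell^{\ast}} \) one builds a smooth radial map \( z\mapsto\rho(\lvert z\rvert)z/\lvert z\rvert \) with strictly increasing profile sending \( [0,\tau\mu\eta] \) onto \( [0,\mu\eta] \) and \( [\tau\mu\eta,2\mu\eta] \) onto \( [\mu\eta,2\mu\eta] \), and equal to the identity for \( \lvert z\rvert\geq2\mu\eta \); after adding \( \ell^{\ast} \) dummy variables and transporting this model isometrically to every face of \( \mT^{\ell^{\ast}} \), the pieces coincide with the identity near the boundary of their supports and glue into a smooth local diffeomorphism \( \upPhi\colon\bR^{m}\to\bR^{m} \) with \( \upPhi(\sigma^{m})\subset\sigma^{m} \) for all \( \sigma^{m}\in K^{m} \), with \( \upPhi=\id \) off \( \mT^{\ell^{\ast}}+Q^{m}_{2\mu\eta} \), and extendable to a slightly larger open set since each block is. Since \( \upPhi \) is smooth with \( D\upPhi \) and \( (D\upPhi)^{-1} \) bounded, \( u\circ\upPhi\in W^{s,p}(\mK^{m}) \) whenever \( u\in W^{s,p}(\mK^{m}) \), and as \( u=v \) off \( \mT^{\ell^{\ast}}+Q^{m}_{\mu\eta} \) we get \( u\circ\upPhi=v \) off \( \mT^{\ell^{\ast}}+Q^{m}_{2\mu\eta} \); every comparison therefore localizes to that tube.

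Next I would split \( \mK^{m}\cap(\mT^{\ell^{\ast}}+Q^{m}_{2\mu\eta}) \) into the shrunk core \( \mT^{\ell^{\ast}}+Q^{m}_{\tau\mu\eta} \) and the surrounding transition annulus. On the annulus, \( \upPhi \) distorts only on the scale \( \mu\eta \), so \( \lvert D^{t}\upPhi\rvert\lesssim(\mu\eta)^{1-t} \) and the Jacobian is bounded above and below there; moreover \( \upPhi \) maps the annulus into the complement of \( \mT^{\ell^{\ast}}+Q^{m}_{\mu\eta} \), so \( u\circ\upPhi=v\circ\upPhi \) on it. The Faà di Bruno formula, the multilinearity of the derivative and a change of variables then yield the ``main'' terms in~\ref{item:below_main_shrinking_sgeq1_integer}--\ref{item:below_main_shrinking_all}, namely sums of \( (\mu\eta)^{i-j}\lVert D^{i}v\rVert_{L^{p}} \), their fractional analogues, and the term \( (\mu\eta)^{-s}\lVert v\rVert_{L^{p}} \) of~\ref{item:below_main_shrinking_sle1} produced by difference quotients straddling the transition layer of width \( \sim\mu\eta \), all over \( \mK^{m}\cap(\mT^{\ell^{\ast}}+Q^{m}_{2\mu\eta}) \). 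On the shrunk core, \( u\circ\upPhi \) is the \( 1/\tau \)-reparametrization of \( u \) in the \( \ell+1 \) transverse directions, and a change of variables (for the integer and \( L^{p} \) parts), resp.\ the anisotropic Gagliardo estimate (for the fractional parts), bounds its contribution by \( C\tau^{\ell+1-sp} \) times a Sobolev energy of \( u \) on \( \mK^{m}\cap(\mT^{\ell^{\ast}}+Q^{m}_{\mu\eta}) \), finite since \( u\in W^{s,p} \). Here the hypothesis \( \ell+1>sp \) is essential: it makes this a positive power of \( \tau \), hence absorbed into the \( +\varepsilon \) of each estimate. The bound~\ref{item:below_main_shrinking_all} is the most direct, following from the change of variables, the bound on \( (D\upPhi)^{-1} \) and the compactness of \( \mN \).

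The delicate point — and the main obstacle — is the fractional estimate on and across the shrunk core, since the shrinking is \emph{anisotropic}: it contracts only the \( \ell+1 \) transverse directions, so a naive scaling of the full Gagliardo double integral overcounts. As in~\cite[Section~8]{BousquetPonceVanSchaftingen2015} and~\cite[Section~7]{Detaille2023}, and in the spirit of the radius splitting in the proof of Theorem~\ref{thm:main_method_of_projection}, the remedy is to break the inner Gagliardo integral at a radius \( r\sim\mu\eta \) (or \( r\sim\tau\mu\eta \) inside the core), controlling \( \lvert x-y\rvert<r \) by the Lipschitz bound on \( \upPhi \) and \( \lvert x-y\rvert\geq r \) by direct integration of \( \lvert x-y\rvert^{-m-\sigma p} \), then optimizing; this yields the powers of \( \mu\eta \) in~\ref{item:below_main_shrinking_sgeq1_frac} and the gain \( \tau^{\ell+1-sp} \). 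Apart from quoting those references, the only new bookkeeping is the gluing over \( \mK^{m} \) done above and the identification of \( \ell+1 \) as the codimension of \( \mT^{\ell^{\ast}} \), which links \( \ell+1>sp \) to the positivity of the gain exponent. I would treat \( 0<s<1 \) first, where only first-order information on \( \upPhi \) enters, then bootstrap to \( s\geq1 \) via Faà di Bruno, the integer estimate~\ref{item:below_main_shrinking_sgeq1_integer} feeding into~\ref{item:below_main_shrinking_sgeq1_frac} exactly as the integer case feeds the fractional one in the proof of Theorem~\ref{thm:main_method_of_projection}.
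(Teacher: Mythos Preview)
Your proposal is correct and follows exactly the route the paper takes: the paper does not give a self-contained argument but simply invokes \cite[Proposition~7.1]{Detaille2023} (the shrinking construction of \cite[Section~8]{BousquetPonceVanSchaftingen2015} in the fractional setting) and remarks that the stated estimates follow by choosing the shrinking parameter \( \tau \) sufficiently small, \emph{depending on \( u \) and \( v \)}, so that the core contribution of order \( \tau^{\ell+1-sp} \) times a Sobolev energy of \( u \) is absorbed into the \( +\varepsilon \). Your sketch is precisely an outline of how that cited proposition is proved, and you correctly isolate the role of the hypothesis \( \ell+1>sp \). One minor slip: in your justification of estimate~(iv) the compactness of \( \mN \) plays no role, since the proposition is stated for arbitrary \( u,v\in W^{s,p}(\mK^{m}) \); the \( L^{p} \) bound follows from the change of variables and the Jacobian control alone.
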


Proposition~\ref{prop:main_analytical_tool} is obtained from~\cite[Proposition~7.1]{Detaille2023} by choosing \( \tau \) sufficiently small, depending on \( u \) and \( v \), as explained below the proposition.

Having at hand Propositions~\ref{prop:main_topological_tool} and~\ref{prop:main_analytical_tool}, we are ready to perform the topological and analytical steps of our construction.
However, before proving Theorem~\ref{theorem:main}, we need one last technical tool.
Indeed, our proof involves composing the map \( u \in \Rrig \subset \Rclas \) we want to approximate with the maps provided by Propositions~\ref{prop:main_topological_tool} and~\ref{prop:main_analytical_tool}.
The following lemma ensures that the class \( \Rclas \) is stable through composition with a local diffeomorphism.

\begin{lemma}
\label{lemma:Rclas_local_diffeomorphism}
	Let \( \varepsilon > 0 \), and let \( \upPhi \colon Q_{1+\varepsilon} \to \bR^{m} \) be a local diffeomorphism such that \( \upPhi\parens{Q^{m}} \subset Q^{m} \).
	For every \( u \in \Rclas_{i}\parens{Q^{m}} \), we have that \( u \circ \upPhi \in \Rclas_{i}\parens{Q^{m}} \).
\end{lemma}
\begin{proof}
	Let \( \mS \) denote the singular set of \( u \).
	We may assume that \( \mS \neq \varnothing \), otherwise the proof is trivial.
	We may also assume that \( \upPhi \) is actually a local diffeomorphism defined on the whole \( \bR^{m} \).
	Indeed, if this is not the case, we consider a diffeomorphism \( \upPsi \colon \bR^{m} \to Q_{1+\varepsilon} \) such that \( \upPsi = \id \) on \( Q^{m} \), and we replace \( \upPhi \) by \( \upPhi \circ \upPsi \), which is a local diffeomorphism on \( \bR^{m} \) and coincides with \( \upPhi \) on \( Q^{m} \).
	We note importantly that, if \( \upPhi \) has the additional property that \( \upPhi^{-1}\parens{\mS} \) is a closedly embedded submanifold of \( Q_{1+\varepsilon} \) (for the relative topology), then \( \parens{\upPhi \circ \upPsi}^{-1}\parens{\mS} \) is a closedly embedded submanifold of \( \bR^{m} \).
	This will be important in the sequel, when working with constructions to produce maps in the class \( \Rsmooth \).
	
	Since \( \upPhi \) is a local diffeomorphism, the map \( u \circ \upPhi \) is smooth on \(  Q^{m} \setminus \tilde{\mS} \), where \( \tilde{\mS} = \upPhi^{-1}\parens{\mS} \) is a finite union of smooth \( i \)\=/dimensional submanifolds of \( \bR^{m} \).
	Moreover, if \( u \) extends smoothly on \( U \setminus \mS \) for some open set \( U \subset \bR^{m} \) satisfying \( Q^{m} \Subset U \), then \( u \circ \upPhi \) extends smoothly on \( \upPhi^{-1}\parens{U} \setminus \tilde{\mS} \), and \( \upPhi^{-1}\parens{U} \) is an open subset of \( \bR^{m} \) satisfying \( Q^{m} \Subset \upPhi^{-1}\parens{U} \).
	It therefore remains to prove the estimates on the derivatives of \( u \circ \upPhi \).
	
	For this purpose, we first note that, as \( \upPhi \) is defined on the whole \( \bR^{m} \), it has bounded derivatives on \( Q^{m} \).
	Therefore, the Faà di Bruno formula ensures that, for every \( x \in Q^{m} \) and \( j \in \bN_{\ast} \),
	\[
		\abs{D^{j}\parens{u \circ \upPhi}\parens{x}}
		\lesssim
		\sum_{t=1}^{j} \abs{D^{t}u\parens{\upPhi\parens{x}}}
		\lesssim
		\sum_{t=1}^{j}\frac{1}{\dist{\parens{\upPhi\parens{x},\mS}}^{t}}
		\lesssim
		\frac{1}{\dist{\parens{\upPhi\parens{x},\mS}}^{j}}.
	\]
	We conclude the proof by showing that \( \dist{\parens{\upPhi\parens{x},\mS}} \gtrsim \dist{\parens{x,\tilde{\mS}}} \) for every \( x \in Q^{m} \).
	
	For this purpose, we first note that, by a compactness argument, there exists \( \delta > 0 \) such that, for every \( x \in Q^{m} \), the restriction of \( \upPhi \) to \( \upPhi^{-1}\parens{B_{\delta}\parens{\upPhi\parens{x}}} \) is a diffeomorphism onto \( B_{\delta}\parens{\upPhi\parens{x}} \).
	Taking \( \delta \) smaller if necessary, this implies in particular that 
	\begin{equation}
	\label{eq:inverse_lipschitz_local_diffeomorphism}
		\abs{x-y} \lesssim \abs{\upPhi\parens{x}-\upPhi\parens{y}}
		\quad
		\text{whenever \( \abs{\upPhi\parens{x} - \upPhi\parens{y}} < \delta \).}
	\end{equation}
	It suffices to show that \( \dist{\parens{\upPhi\parens{x},\mS}} \gtrsim \dist{\parens{x,\tilde{\mS}}} \) whenever \( x \in Q^{m} \) is such that \( \upPhi\parens{x} \) is sufficiently close to \( \mS \).
	Hence, let \( x \in Q^{m} \) be such that \( \dist{\parens{\upPhi\parens{x},\mS}} < \delta \).
	Let \( z \in \mS \) be such that \( \abs{\upPhi\parens{x}-z} = \dist{\parens{\upPhi\parens{x},\mS}} \).
	In particular, there exists \( y \in \upPhi^{-1}\parens{B_{\delta}\parens{\upPhi\parens{x}}} \) such that \( \upPhi\parens{y} = z \).
	With this choice, we have \( y \in \upPhi^{-1}\parens{\mS} = \tilde{\mS} \) and therefore, due to~\eqref{eq:inverse_lipschitz_local_diffeomorphism},
	\[
		\dist{\parens{x,\tilde{\mS}}}
		\leq
		\abs{x-y}
		\lesssim
		\abs{\upPhi\parens{x}-\upPhi\parens{y}}
		=
		\dist{\parens{\upPhi\parens{x},\mS}}.
	\]
	This concludes the proof of the lemma.
\end{proof}

\begin{proof}[Proof of Theorem~\ref{theorem:main}]
	Since the more rigid class \( \Rrig_{m-[sp]-1}\parens{Q^{m};\mN} \) is dense in \( W^{s,p}\parens{Q^{m};\mN} \), it suffices to consider \( u \in \Rrig_{m-\floor{sp}-1}\parens{Q^{m};\mN} \) and to show that it can be approximated by maps in the uncrossed class \( \Rsmooth_{m-\floor{sp}-1}\parens{Q^{m};\mN} \).
	Let \( \ell = \floor{sp} \), and let \( \mT^{\ell^{\ast}} \) be the dual skeleton of the \( \ell \)\=/skeleton \( \mK^{\ell} \) of a cubication \( \mK^{m} \) of radius \( \eta > 0 \) of \( \overline{Q^{m}} \), chosen so that \( \mT^{\ell^{\ast}} \) coincides with the singular set of \( u \).
	Recall that, as already explained, we may limit ourselves to consider maps such that their singular set is placed like this.
	Also, recall that we may assume that \( \ell \leq m-2 \), as if \( \ell = m-1 \) there is nothing to prove.
	
	For the sake of conciseness, we let \( \mA_{\mu} = Q^{m} \cap \parens{\mT^{\ell^{\ast}}+Q_{2\mu\eta}} \).
	Given \( 0 < \mu < \frac{1}{2} \), we start by applying Proposition~\ref{prop:main_topological_tool} to obtain a map \( \upPhi^{\rtop}_{\mu} \colon Q^{m} \to Q^{m} \) such that, defining \( u^{\rtop}_{\mu} = u \circ \upPhi^{\rtop}_{\mu} \), we have that 
	\begin{enumerate}[label=(\roman*)]
		\item\label{item:utop_sing_set} \( \mS^{\rtop}_{\mu} = \parens{\upPhi^{\rtop}_{\mu}}^{-1}\parens{\mT^{\ell^{\ast}}} \) is a smooth \( \ell^{\ast} \)\=/dimensional submanifold of \( Q^{m}\);
		\item\label{item:utop_eq_u} \( u^{\rtop}_{\mu} = u \) outside of \( \mT^{\ell^{\ast}} + Q_{\mu\eta} \);
		\item\label{item:utop_sobolev} \( u^{\rtop}_{\mu} \in W^{s,p}\parens{Q^{m}} \).
	\end{enumerate}
	Item~\ref{item:utop_sobolev} above is a consequence of the fact that \( u^{\rtop}_{\mu} \in \Rclas_{m-\floor{sp}-1}\parens{Q^{m};\mN} \), as \( u \in \Rclas_{m-\floor{sp}-1}\parens{Q^{m};\mN} \) and using Lemma~\ref{lemma:Rclas_local_diffeomorphism}.

	Since \( \ell+1 = \floor{sp}+1 > sp \) and thanks to~\ref{item:utop_eq_u} and~\ref{item:utop_sobolev} above, we may now invoke Proposition~\ref{prop:main_analytical_tool} on \( u^{\rtop}_{\mu} \) and \( u \), with \( \varepsilon = \mu \), to deduce the existence of a smooth local diffeomorphism \( \upPhi^{\sh}_{\mu} \colon Q^{m} \to Q^{m} \) such that, letting \( u^{\sh}_{\mu} = u^{\rtop}_{\mu} \circ \upPhi^{\sh}_{\mu} \), we have that \( u^{\sh}_{\mu} \in W^{s,p}\parens{Q^{m}} \) with
	\begin{enumerate}[label=(\roman*)]
		\item\label{item:ush_sle1} if \( 0 < s < 1 \), then 
		\[
		\lvert u^{\sh}_{\mu} - u \rvert_{W^{s,p}\parens{Q^{m}}} 
		\lesssim
		\lvert u \rvert_{W^{s,p}\parens{\mA_{\mu}}} + \parens{\mu\eta}^{-s}\lVert u \rVert_{L^{p}\parens{\mA_{\mu}}} + \mu\text{;}
		\]
		\item\label{item:ush_sgeq1_integer} if \( s \geq 1 \), then for every \( j \in \{1,\dots,k\} \),
		\[
		\lVert D^{j}u^{\sh}_{\mu} - D^{j}u \rVert_{L^{p}\parens{Q^{m}}}
		\lesssim
		\sum_{i=1}^{j} \parens{\mu\eta}^{i-j}\lVert D^{i}u \rVert_{L^{p}\parens{\mA_{\mu}}} + \mu\text{;}
		\]
		\item\label{item:ush_sgeq1_frac} if \( s \geq 1 \) and \( \sigma \neq 0 \), then for every \( j \in \{1,\dots,k\} \),
		\[
		\lvert D^{j}u^{\sh}_{\mu} - D^{j}u \rvert_{W^{\sigma,p}\parens{Q^{m}}} 
		\lesssim
		\sum_{i=1}^{j}\parens[\Big]{\parens{\mu\eta}^{i-j-\sigma}\lVert D^{i}u \rVert_{L^{p}\parens{\mA_{\mu}}} + \parens{\mu\eta}^{i-j}\lvert D^{i}u \rvert_{W^{\sigma,p}\parens{\mA_{\mu}}}} + \mu\text{;}
		\]
		\item\label{item:ush_all} for every \( 0 < s < +\infty \),
		\[
		\lVert u^{\sh}_{\mu} - u \rVert_{L^{p}\parens{Q^{m}}}
		\lesssim 
		\lVert u \rVert_{L^{p}\parens{\mA_{\mu}}} + \mu\text{.}
		\]
	\end{enumerate}

	Since \( \upPhi^{\sh}_{\mu} \) is a local diffeomorphism, we know that \( \parens{\upPhi^{\sh}_{\mu}}^{-1}\parens{\mS^{\rtop}_{\mu}} \) is a smooth \( \ell^{\ast} \)\-/dimensional submanifold of \( Q^{m} \).
	Moreover, \( \upPhi^{\rtop}_{\mu} \) and \( \upPhi^{\sh}_{\mu} \) satisfy the assumptions of Lemma~\ref{lemma:Rclas_local_diffeomorphism}.
	This shows that \( u^{\sh}_{\mu} \in \Rsmooth_{m-\floor{sp}-1}\parens{Q^{m};\mN} \) for every \( 0 < \mu < \frac{1}{2} \), and it therefore only remains to prove the \( W^{s,p} \) convergence \( u^{\sm}_{\mu} \to u \) as \( \mu \to 0 \) to conclude the proof. 
	To accomplish this, we verify that the quantities in the right-hand side of~\ref{item:ush_sle1} to~\ref{item:ush_all} converge to \( 0 \) as \( \mu \to 0 \).
	
	We first observe the following estimate on the measure of \( \mA_{\mu} \):
	\begin{equation}
	\label{eq:estimate_mes_Amu}
		\abs{\mA_{\mu}} \lesssim \parens{\mu\eta}^{\ell+1}\text{.}
	\end{equation}
	By Lebesgue's lemma, we deduce that the quantities \( \abs{u}_{W^{s,p}\parens{\mA_{\mu}}} \) and \( \norm{u}_{L^{p}\parens{\mA_{\mu}}} \), that appear on~\ref{item:ush_sle1} and~\ref{item:ush_all} respectively, indeed tend to \( 0 \) as \( \mu \to 0 \).
	Moreover, when \( 0 < s < 1 \), using the fact that \( u \in L^{\infty}\parens{Q^{m}} \) by the compactness of \( \mN \), we have 
	\[
		\lVert u \rVert_{L^{p}\parens{\mA_{\mu}}}
		\lesssim
		\abs{\mA_{\mu}}^{\frac{1}{p}}
		\lesssim
		\parens{\mu\eta}^{\frac{\ell+1}{p}}\text{.}
	\]
	Therefore,
	\[
		\parens{\mu\eta}^{-s}\lVert u \rVert_{L^{p}\parens{\mA_{\mu}}}
		\lesssim
		\parens{\mu\eta}^{\frac{\ell+1-sp}{sp}}\text{,}
	\]
	which converges to \( 0 \) as \( \mu \to 0 \) because of the fact that \( sp < \ell+1 \).
	
	We now consider estimates~\ref{item:ush_sgeq1_integer} and~\ref{item:ush_sgeq1_frac}, when \( s \geq 1 \). 
	Observe that, since \( u \in W^{s,p}\parens{Q^{m}} \cap L^{\infty}\parens{Q^{m}} \), the Gagliardo--Nirenberg inequality implies that \( D^{i}u \in L^{\frac{sp}{i}}\parens{Q^{m}} \) for every \( i \in \set{1,\dots,k} \).
	Hence, Hölder's inequality and~\eqref{eq:estimate_mes_Amu} ensure that 
	\[
		\norm{D^{i}u}_{L^{p}\parens{\mA_{\mu}}}
		\leq
		\abs{\mA_{\mu}}^{\frac{s-i}{sp}} \norm{D^{i}u}_{L^{\frac{sp}{i}}\parens{\mA_{\mu}}}
		\lesssim
		\parens{\mu\eta}^{\frac{\parens{\ell+1}\parens{s-i}}{sp}}\norm{D^{i}u}_{L^{\frac{sp}{i}}\parens{\mA_{\mu}}}\text{.}
	\]
	Therefore, we deduce that 
	\[
		\parens{\mu\eta}^{i-j}\norm{D^{i}u}_{L^{p}\parens{\mA_{\mu}}}
		\lesssim
		\parens{\mu\eta}^{\frac{\parens{\ell+1}\parens{s-i}-\parens{j-i}sp}{sp}}\norm{D^{i}u}_{L^{\frac{sp}{i}}\parens{\mA_{\mu}}}
		\lesssim
		\parens{\mu\eta}^{\frac{\parens{j-i}\parens{\ell+1-sp}}{sp}}\norm{D^{i}u}_{L^{\frac{sp}{i}}\parens{\mA_{\mu}}}\text{.}
	\]
	As \( sp < \ell+1 \), the exponent of \( \mu\eta \) is positive, which implies that the right-hand side converges to \( 0 \) as \( \mu \to 0 \).
	This handles estimate~\ref{item:ush_sgeq1_integer}.
	
	If \( s \geq 1 \) and \( \sigma \neq 0 \), the same reasoning leads to
	\[
		\parens{\mu\eta}^{i-j-\sigma}\norm{D^{i}u}_{L^{p}\parens{\mA_{\mu}}}
		\lesssim
		\parens{\mu\eta}^{\frac{\parens{\sigma+j-i}\parens{\ell+1-sp}}{sp}}\norm{D^{i}u}_{L^{\frac{sp}{i}}\parens{\mA_{\mu}}}\text{,}
	\] 
	which also goes to \( 0 \) as \( \mu \to 0 \).
	Similarly, by interpolation, see~\cite[Lemma~6.1]{Detaille2023}, we find
	\[
		\abs{D^{i}u}_{W^{\sigma,p}\parens{\mA_{\mu}}}
		\lesssim
		\abs{\mA_{\mu}}^{\frac{s-i-\sigma}{sp}}\norm{D^{i}u}^{1-\sigma}_{L^{\frac{sp}{i}}\parens{\mA_{\mu}}}\norm{D^{i+1}u}_{L^{\frac{sp}{i+1}}\parens{Q^{m}}}^{\sigma}
		\lesssim
		\parens{\mu\eta}^{\frac{\parens{s-i-\sigma}\parens{\ell+1}}{sp}}\norm{D^{i}u}^{1-\sigma}_{L^{\frac{sp}{i}}\parens{\mA_{\mu}}}
	\]
	for every \( i \in \set{1,\dots,j-1} \).
	Therefore,
	\[
		\parens{\mu\eta}^{i-j}\abs{D^{i}u}_{W^{\sigma,p}\parens{\mA_{\mu}}}
		\lesssim
		\parens{\mu\eta}^{\frac{\parens{j-i}\parens{\ell+1-sp}}{sp}}\norm{D^{i}u}^{1-\sigma}_{L^{\frac{sp}{i}}\parens{\mA_{\mu}}}\text{,}
	\]
	which once more goes to \( 0 \) as \( \mu \to 0 \).
	This finishes to handle the second term in estimate~\ref{item:ush_sgeq1_frac} when \( i < j \).
	The second term for \( i = j \) is simply \( \abs{D^{j}u}_{W^{\sigma,p}\parens{\mA_{\mu}}} \), which converges to \( 0 \) due to the Lebesgue lemma.
	
	All cases being covered, this finishes to prove that \( u^{\sh}_{\mu}  \to u \) as \( \mu \to 0 \), which concludes the proof of the theorem.
\end{proof}

As a concluding remark, we note that our method uses in a explicit way the fact that the domain is a cube.
However, the argument can be adapted to any domain which has a shape allowing to evacuate crossings as we did for the cube.
For instance, consider the ball with a hole \( B_{2} \setminus B_{1} \subset \bR^{m} \).
One may use a decomposition into cells that are diffeomorphic to cubes and arranged in a radial way, and evacuate crossings between lines along the radial direction to deduce the density of \( \Rsmooth_{1}\parens{B_{2} \setminus B_{1};\mN} \) in \( W^{s,p}\parens{B_{2} \setminus B_{1};\mN} \) when \( \floor{sp} = m-2 \).
The idea of the construction is illustrated on Figure~\ref{fig:radial_uncrossing} in dimension \( m = 2 \), where we have represented the singular set of the map in the radial equivalent of the class \( \Rrig \) to be approximated in red, and the wells used to uncross the singularities in dark blue.
We shall not attempt to present a detailed argument, since it would require to adapt the whole proof of the density of class \( \Rrig \) in a radial version, which would considerably increase the length of this text.
We therefore keep this observation as a remark, and not a theorem with precise statement and proof.
On the other hand, on the same domain, the method does not seem to work to uncross plane singularities for instance, since there is no second direction along which to evacuate the remaining crossings after the first uncrossing step has been performed.

\begin{figure}[ht]
	\centering
	\includegraphics[page=10]{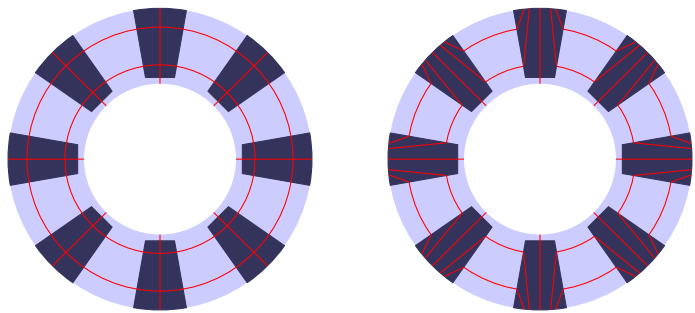}
	\caption{Radial uncrossing procedure}
	\label{fig:radial_uncrossing}
\end{figure}

Nevertheless, this particular situation does not provide a counterexample, since one could extend the map to be approximated inside the hole by homogeneous extension, and then apply the technique we introduced to uncross the singularities of the extended map on \( B_{2} \).
However, such a straightforward extension argument cannot be implemented on a general domain.
Actually, there does not seem to be a direct way to solve the case of a general domain using the technique we introduced for \( Q^{m} \) as such.

It is not clear to us what should be the general situation.
It could be that the class \( \Rsmooth \) is always dense in \( W^{s,p} \), but that the proof for a general domain requires an adaptation of our argument or even a new idea.
It could also be that there are some new obstructions that arise, stemming for instance from the topology of the domain, in the spirit of the work of Hang and Lin~\cite{HangLin2003II}.
This motivates us to conclude on the following open problem.

\begin{openproblem}
\label{openproblem:general_domain}
	Is it true that \( \Rsmooth_{m-\floor{sp}-1}\parens{\Omega;\mN} \) is always dense in \( W^{s,p}\parens{\Omega;\mN} \) for any domain \( \Omega \subset \bR^{m} \) sufficiently smooth?
\end{openproblem}

\bibliographystyle{amsalpha-nodash-init-nosentcase}

\begin{thebibliography}{BPVS14}
	
	\bibitem[Bet91]{Bethuel1991}
	F.~Bethuel, \emph{The approximation problem for {Sobolev} maps between two
		manifolds}, Acta Math. \textbf{167} (1991), 153--206.
	
	\bibitem[BZ88]{BethuelZheng1988}
	F.~Bethuel and X.~Zheng, \emph{Density of smooth functions between two
		manifolds in {Sobolev} spaces}, J. Funct. Anal. \textbf{80} (1988), no.~1,
	60--75.
	
	\bibitem[BBM04]{BourgainBrezisMironescu2004}
	J.~Bourgain, H.~Brezis, and P.~Mironescu, \emph{{\(H^{1/2}\)} maps with values
		into the circle: minimal connections, lifting, and the {Ginzburg}-{Landau}
		equation}, Publ. Math. Inst. Hautes {{\'{E}}}tudes Sci. \textbf{99} (2004),
	no.~1, 1--115.
	
	\bibitem[BBM05]{BourgainBrezisMironescu2005}
	J.~Bourgain, H.~Brezis, and P.~Mironescu, \emph{Lifting, degree, and
		distributional {Jacobian} revisited}, Commun. Pure Appl. Math. \textbf{58}
	(2005), no.~4, 529--551.
	
	\bibitem[Bou07]{Bousquet2007}
	P.~Bousquet, \emph{Topological singularities in {\(W^{S,P}(S^N,S^{1})\)}}, J.
	Anal. Math. \textbf{102} (2007), no.~1, 311--346.
	
	\bibitem[BPVS13]{BousquetPonceVanSchaftingen2013}
	P.~Bousquet, A.~C. Ponce, and J.~Van~Schaftingen, \emph{Density of smooth maps
		for fractional {Sobolev} spaces {\(W^{s, p}\)} into {\(\ell\)} simply
		connected manifolds when {\(s \geq 1\)}}, Confluentes Math. \textbf{5}
	(2013), no.~2, 3--22.
	
	\bibitem[BPVS14]{BousquetPonceVanSchaftingen2014}
	P.~Bousquet, A.~C. Ponce, and J.~Van~Schaftingen, \emph{Strong approximation of
		fractional {Sobolev} maps}, J. Fixed Point Theory Appl. \textbf{15} (2014),
	no.~2, 133--153.
	
	\bibitem[BPVS15]{BousquetPonceVanSchaftingen2015}
	P.~Bousquet, A.~C. Ponce, and J.~Van~Schaftingen, \emph{Strong density for higher
		order {Sobolev} spaces into compact manifolds}, J. Eur. Math. Soc. (JEMS)
	\textbf{17} (2015), no.~4, 763--817.
	
	\bibitem[Bre93]{Bredon1993}
	G.~E. Bredon, \emph{Topology and geometry}, Grad. Texts in Math., no. 139,
	Springer, 1993.
	
	\bibitem[BM01]{BrezisMironescu2001}
	H.~Brezis and P.~Mironescu, \emph{{Gagliardo-Nirenberg}, composition and
		products in fractional {Sobolev} spaces}, J. Evol. Equ. \textbf{1} (2001),
	no.~4, 387--404.
	
	\bibitem[BM15]{BrezisMironescu2015}
	H.~Brezis and P.~Mironescu, \emph{Density in {\(W^{s, p}(\Omega; N)\)}}, J.
	Funct. Anal. \textbf{269} (2015), no.~7, 2045--2109.
	
	\bibitem[BM18]{BrezisMironescu2018}
	H.~Brezis and P.~Mironescu, \emph{{Gagliardo-Nirenberg} inequalities and
		non-inequalities: {The} full story}, Ann. Inst. H. Poincar{\'{e}} C Anal. Non
	Lin{\'{e}}aire \textbf{35} (2018), no.~5, 1355--1376.
	
	\bibitem[BM21]{BrezisMironescu2021}
	H.~Brezis and P.~Mironescu, \emph{Sobolev maps to the circle}, Progr. Nonlinear
	Differential Equations Appl., no.~96, Birkh{\"{a}}user, 2021.
	
	\bibitem[Det23]{Detaille2023}
	A.~Detaille, \emph{A complete answer to the strong density problem in {Sobolev}
		spaces with values into compact manifolds}, May 2023, arXiv:
	\href{https://arxiv.org/abs/2305.12589}{\texttt{2305.12589 [math.FA]}}.
	
	\bibitem[FF60]{FedererFleming1960}
	H.~Federer and W.~H. Fleming, \emph{Normal and integral currents}, Ann. of
	Math. \textbf{72} (1960), no.~3, 458--520.
	
	\bibitem[Gag59]{Gagliardo1959}
	E.~Gagliardo, \emph{Ulteriori propriet{\`{a}} di alcune classi di funzioni in
		pi{\`{u}} variabili}, Ric. Mat. \textbf{8} (1959), 24--51.
	
	\bibitem[Gas16]{Gastel2016}
	A.~Gastel, \emph{Partial regularity of polyharmonic maps to targets of
		sufficiently simple topology}, Z. Anal. Anwend. \textbf{35} (2016), no.~4,
	397--410.
	
	\bibitem[Haj94]{Hajlasz1994}
	P.~Haj{\l}asz, \emph{Approximation of {Sobolev} mappings}, Nonlinear Anal.
	\textbf{22} (1994), no.~12, 1579--1591.
	
	\bibitem[HL03]{HangLin2003II}
	F.~Hang and F.~Lin, \emph{Topology of {Sobolev} mappings {II}}, Acta Math.
	\textbf{191} (2003), no.~1, 55--107.
	
	\bibitem[HL87]{HardtLin1987}
	R.~Hardt and F.~Lin, \emph{Mappings minimizing the {\(L^ p\)} norm of the
		gradient}, Commun. Pure Appl. Math. \textbf{40} (1987), no.~5, 555--588.
	
	\bibitem[Hat02]{Hatcher2002}
	A.~Hatcher, \emph{Algebraic topology}, Cambridge University Press, 2002.
	
	\bibitem[MVS21]{MironescuVanSchaftingen2021}
	P.~Mironescu and J.~Van~Schaftingen, \emph{Lifting in compact covering spaces
		for fractional {Sobolev} mappings}, Anal. PDE \textbf{14} (2021), no.~6,
	1851--1871.
	
	\bibitem[Nas54]{Nash1954}
	J.~Nash, \emph{{\(C^1\)} isometric imbeddings}, Ann. of Math. (2) \textbf{60}
	(1954), no.~3, 383--396.
	
	\bibitem[Nas56]{Nash1956}
	J.~Nash, \emph{The imbedding problem for {Riemannian} manifolds}, Ann. of Math.
	(2) \textbf{63} (1956), no.~1, 20--63.
	
	\bibitem[Nir59]{Nirenberg1959}
	L.~Nirenberg, \emph{On elliptic partial differential equations}, Ann. Sc. Norm.
	Super. Pisa Cl. Sci. (3) \textbf{13} (1959), no.~2, 115--162.
	
	\bibitem[PR03]{PakzadRiviere2003}
	M.~R. Pakzad and T.~Rivi{\`{e}}re, \emph{Weak density of smooth maps for the
		{Dirichlet} energy between manifolds}, Geom. Funct. Anal. \textbf{13} (2003),
	no.~1, 223--257.
	
	\bibitem[Riv00]{Riviere2000}
	T.~Rivi{\`{e}}re, \emph{Dense subsets of {\(H^{1/2}(S^2,S^1)\)}}, Ann. Global
	Anal. Geom. \textbf{18} (2000), no.~5, 517--528.
	
	\bibitem[SU83]{SchoenUhlenbeck1983}
	R.~Schoen and K.~Uhlenbeck, \emph{Boundary regularity and the {Dirichlet}
		problem for harmonic maps}, J. Differential Geom. \textbf{18} (1983), no.~2,
	253--268.
	
	\bibitem[VS19]{VanSchaftingenOxfordNotes}
	J.~Van~Schaftingen, \emph{Sobolev mappings into manifolds: nonlinear methods
		for approximation, extension and lifting problems}, Lecture notes, Graduate
	course, Oxford, 2019.
	
	\bibitem[War83]{Warner1983}
	F.~W. Warner, \emph{Foundations of differentiable manifolds and {Lie} groups},
	Grad. Texts in Math., no.~94, Springer, 1983.
	
\end{thebibliography}

\end{document}